\newtheorem{theorem}{Theorem}[section]
\newtheorem{claim}[theorem]{Claim}
\newtheorem{prop}[theorem]{Proposition}
\newtheorem{lemma}[theorem]{Lemma}
\newtheorem{remark}[theorem]{Remark}
\newtheorem{definition}[theorem]{Definition}
\newtheorem{cor}[theorem]{Corollary}
\begin{document}

\title{Additivity and Relative Kodaira Dimensions}
\author{Tian-Jun Li }
\address{School  of Mathematics\\  University of Minnesota\\ Minneapolis, MN 55455}
\email{tjli@math.umn.edu}
\author{Weiyi Zhang}
\address{School  of Mathematics\\  University of Minnesota\\ Minneapolis, MN 55455}
\email{zhang393@math.umn.edu}

\maketitle

\begin{center}
\textit{Dedicated to Shing-Tung Yau on the occasion of his $60^{th}$
birthday}\end{center}

\section{Introduction}
The notion of Kodaira dimension has been defined for complex
manifolds in \cite{K}, for symplectic $4-$manifolds in \cite{L1}
(see also \cite{McS}, \cite{LeB}). It is shown in \cite{DZ} (and
\cite{L1}) that these two definitions are compatible in dimension
$4$. Furthermore, we calculate it for some (4-dimensional) Lefschetz
fibrations when the base has positive genus. In \cite{Z}, this
notion is extended to $3-$dimensional manifolds
via geometric structures in the sense of Thurston. All these Kodaira
dimensions are ``absolute'' invariants, taking values in the set
\begin{equation}\label{value} \{-\infty, 0, 1, \cdots, \lfloor\frac{n}{2}\rfloor\},\end{equation} where
$n$ is the real dimension of the manifold, and $\lfloor x\rfloor$ is
the largest integer bounded by $x$.

We will review them and introduce a few more for logical convenience
in section $2$. We also point out  in section 2 that they are
invariant under covering,
and further show that they are additive for many fiber bundles.

In  recent years, the study of relative  invariants for a pair of
symplectic (projective) manifold with a codimension 2 symplectic
submanifold (smooth divisor) becomes increasingly important,
especially in Gromov-Witten theory (\cite{aLR}, \cite{MP},
\cite{EGH}, \cite{jLi}).
 The  relative
invariants are used to calculate the absolute invariants via fiber
sum and its reverse, symplectic cut (degeneration).
 From this point of view, the paper \cite{LY} by the first author and Yau can
be viewed as a first step towards a possible definition of relative
Kodaira dimension for symplectic $4-$manifolds.


In this paper, motivated by \cite{LY}, we  introduce in section 3
the notion of relative Kodaira dimension $\kappa^s(M^4, \omega,
F^2)$ for  a symplectic $4-$manifold $(M^4, \omega)$ with a possibly
disconnected embedded symplectic surface $F$.
 They take the same set of values as in \eqref{value}.

 To define it we need to establish several homological properties of
 embedded symplectic surfaces in 3.1-2. These properties are formulated in
 terms of the formal Kodaira dimension \eqref{kappa(r)}.
It should be mentioned that symplectic spheres do not satisfy most
of these properties.
 We also
 formulate in 3.3 the notion of relative minimal model, and prove
 the existence and the somewhat surprising uniqueness.

We then define $\kappa^s(M^4, \omega, F^2)$ in 3.4. One notable
feature is that the sphere components of $F^2$ have to be discarded,
which resembles the definition of Thurston norm of $3-$manifolds.
 For a symplectic $4-$manifold
constructed via a positive genus fiber sum, the main result in
\cite{LY} can then be interpreted as a simple expression of its
Kodaira dimension in terms of the relative Kodaira dimensions of the
summands (Theorem \ref{fibersum}).

Another motivation comes from  the  paper \cite{DZ} by the second
author and J. Dorfmeister concerning the additivity of the Kodaira
dimensions for a $4-$dimensional Lefschetz fibration with singular
fibers. In that paper, the additivity is shown to hold in many
cases, while there is only a supadditivity relation in some cases.
It was speculated by the second author  whether this defect can be
remedied  if using appropriate relative Kodaira type invariants. For
this purpose,  we also introduce relative Kodaira dimension for a
$2-$manifold with a $\mathbb Q-$linear combination of points.
The well definedness is immediate in this case.
  We demonstrate that this
notion of relative Kodaira dimension can indeed be used to calculate
the Kodaira dimension of the total space for several kinds of
fibrations over surfaces with singular fibers.

The authors would like to thank Anar Akhmedov and Josef Dorfmeister
for very useful suggestions and discussions during the preparation
of the work, and Albert Marden for his interest. This research is
partially supported by NSF.

\tableofcontents

\section{Kodaira Dimensions and fiber bundles}
The goal of this section is to recall briefly the definitions of
various Kodaira dimensions mentioned in the introduction, and
establish the additivity for appropriate classes of fiber bundles.

\subsection{$\kappa^h$ for complex manifolds and $\kappa^t$ up to dimension $3$}

\subsubsection{The holomorphic Kodaira dimension $\kappa^h$}
Let  us first recall the original Kodaira dimension in complex
geometry.

\begin{definition} Suppose $(M, J)$ is a complex manifold of real dimension $2m$.
The holomorphic Kodaira dimension $\kappa^{h}(M,J)$ is defined as
follows:

\[
\kappa^{h}(M,J)=\left\{\begin{array}{ll}
-\infty &\hbox{ if $P_l(M,J)=0$ for all $l\ge 1$},\\
0& \hbox{ if $P_l(M,J)\in \{0,1\}$, but $\not\equiv 0$ for all $l\ge 1$},\\
k& \hbox{ if $P_l(M,J)\sim cl^k$; $c>0$}.\\
\end{array}\right.
\]

\end{definition}

Here  $P_l(M,J)$ is the $l-$th plurigenus of the  complex manifold
$(M, J)$ defined by $P_l(M,J)=h^0({\mathcal K}_J^{\otimes l})$, with
${\mathcal K}_J$ the canonical bundle of $(M,J)$.

\subsubsection{The topological Kodaira dimension $\kappa^t$ for manifolds up to dimension $3$}
As mentioned there are other situations where a similar notion can
be defined.  Let $M$ be a closed, smooth, oriented manifold.
  To begin with, we
make the following definition for  logical compatibility.

\begin{definition}
If $M=\emptyset$, then its Kodaira dimension   is defined to be
$-\infty$.
\end{definition}

 The only closed
connected $0-$dimensional manifold is a point, and the only closed
connected $1-$dimensional manifold is a circle.

\begin{definition} \label{01} If $M$ has dimension $0$ or $1$, then
its  Kodaira dimension $\kappa^t(M)$ is defined to be $0$.
\end{definition}

The $2-$dimensional Kodaira dimension is defined by the positivity
of the Euler class. We write $K=-e$.

\begin{definition}\label{2dim}
Suppose $M^2$ is a $2-$dimensional manifold with Euler class
$e(M^2)$. Write $K=-e(M^2)$ and define
\[ \kappa^t(M^2)=\left\{\begin{array}{cc}
-\infty & \hbox{ if $K<0$},\\
0& \hbox{ if $K=0$},\\
1& \hbox{ if $K> 0$}.
\end{array}\right.
\]
\end{definition}

It is easy to see that for any complex structure $J$ on $M^2$, $K$
is its canonical class, and  $\kappa^h(M^2, J)=\kappa^t(M^2)$.
 $\kappa^t(M^2)$ can be further interpreted from other viewpoints:
 symplectic structure
($K$ is also the symplectic canonical class), the Yamabe invariant,
geometric structures and etc.

Recall that the Yamabe invariant is defined in the following way
(\cite{Ko}, \cite{Sch}): \begin{equation}\label{yamabeinv}Y(M) =
\sup_{[g]\in \mathcal C}\inf_{g\in [g]}\int_M s_g
dV_g,\end{equation} where $g$ is a Riemannian metric on $M$, $s_g$
the scalar curvature of $g$, $[g]$ the conformal class of $g$, and
$\mathcal C$ the set of conformal classes on $M$.

A basic fact is  that $Y(M)>0$ if and only if $M$ admits a metric of
positive scalar curvature. Thus  $Y(M)$ is non-positive if $M$ does
not admit metrics of positive scalar curvature. Furthermore, in this
case, another basic fact is that $Y(M)$ is the supremum of the
scalar curvatures of all unit volume constant-scalar-curvature
metrics on $M$ (such metrics exist due to the resolution of the
Yamabe conjecture). It immediately follows that, in dimension two,
the sign of $Y(M^2)$ completely determines $\kappa^t(M^2)$.

We move on to dimension $3$. In this dimension the definition of the
Kodaira dimension in \cite{Z} by the second author is based on
geometric structures in the sense of Thurston. Divide the $8$
Thurston geometries into $3$ categories: $$\begin{array}{ll}
    -\infty: &\hbox{$S^3$ and $S^2 \times \mathbb{R}$};\cr
     0: &\hbox{$\mathbb{E}^3$, Nil and Sol};\cr
     1: &\hbox{$\mathbb{H}^2\times \mathbb{R}$, $\widetilde{SL_2(\mathbb{R})}$ and
    $\mathbb{H}^3$}.
\end{array}$$
Given a $3-$manifold $M^3$,  we decompose it first by a prime
decomposition and then further consider  a  toridal decomposition
for each prime summand, such that at the end each piece has a
geometric structure either in group $(1)$, $(2)$ or $(3)$ with
finite volume. The following definition was introduced in \cite{Z},
where the well definedness was also checked.

{\definition \label{t} For a $3-$dimensional manifolds $M^3$, we
define its Kodaira dimension  as follows:

\begin{enumerate}
    \item $\kappa^t(M^3)=-\infty$ if for any decomposition, each piece has  geometry type
    in category $-\infty$,
    \item $\kappa^t(M^3)=0$ if for any decomposition, we have at least a piece with geometry type
    in category $0$, but no piece has type in category $1$,
    \item $\kappa^t(M^3)=1$ if for any decomposition, we have at
    least one piece in category $1$.
\end{enumerate}
}

In this dimension, $Y(M^3)$ is also closely related to  geometric
structure of $M^3$, at least when $M^3$ is irreducible (see the
discussions in \cite{A} by Anderson).
However, as observed in \cite{Z}, the number $Y(M^3)$ does not
completely determine $\kappa^t(M^3)$. For $\Sigma_g \times S^1$, it
has vanishing Yamabe invariant if $g\geq 1$. But $\kappa^t(\Sigma_g
\times S^1)=0$ if $g=1$, $\kappa^t(\Sigma_g \times S^1)=1$ if $g\geq
2$. In this case, $\kappa^t$ is still determined by
\eqref{yamabeinv} if we  distinguish whether the supremum is
attainable by a metric. But this refinement of $Y(M^3)$ will still
not determine $\kappa^t$ since a Nil $3-$manifold like a non-trivial
$S^1-$bundle over $T^2$ has  Yamabe invariant $0$ which is not
attainable by any metric.

Notice that here we use $\kappa^t$ to denote the Kodaira dimension
for smooth manifolds in dimensions $0, 1, 2, 3$. Here $t$ stands for
{\it topological}, because in these dimensions homeomorphic
manifolds are actually diffeomorphic.

 For a possibly disconnected manifold, we define its
Kodaira dimension to be the maximum of that of its components. In
summary, we have defined the Kodaira dimension for all the closed,
oriented  manifolds with dimension less than $4$.

\subsection{$\kappa^s$ for symplectic $4-$manifolds} In \cite{L1},  the first
author systematically investigated the notion of symplectic Kodaira
dimension for  symplectic $4-$manifolds. To define it we need to
first recall the notion of minimality.

\subsubsection{Minimality in dimension
$4$}
\begin{definition} Let ${\mathcal E}_M$ be the set of cohomology
classes whose Poincar\'e dual are represented by smoothly embedded
spheres of self-intersection $-1$. $M$ is said to be (smoothly)
minimal if ${\mathcal E}_M$ is the empty set.
 \end{definition}

Equivalently, $M$ is minimal if it is not the connected sum of
another manifold  with $\overline{\mathbb {CP}^2}$. We say that $N$
is a minimal model of $M$ if $N$ is  minimal and $M$ is the
connected sum of $N$ and a number of $\overline{\mathbb {CP}^2}$.

We  also recall the notion of minimality for  $(M,\omega)$.
$(M,\omega)$  is said to be (symplectically) minimal if ${\mathcal
E}_{\omega}$ is the empty set, where
$${\mathcal E}_{\omega}=\{E\in {\mathcal
E}_M|\hbox{ $E$ is represented by an embedded $\omega-$symplectic
sphere}\}.$$ A basic fact proved using SW theory (\cite{T},
\cite{LL1}, \cite{L3}) is:
  ${\mathcal E}_{\omega}$ is
empty if and only if ${\mathcal E}_M$ is empty. In other words, $(M,
\omega)$ is symplectically minimal if and only if $M$ is smoothly
minimal.

\subsubsection{Definitions}
\begin{definition}\label{sym Kod'}
For a minimal symplectic $4-$manifold $(M^4,\omega)$ with symplectic
canonical class $K_{\omega}$,   the Kodaira dimension of
$(M^4,\omega)$
 is defined in the following way:

$$
\kappa^s(M^4,\omega)=\begin{cases} \begin{array}{lll}
-\infty & \hbox{ if $K_{\omega}\cdot [\omega]<0$ or} & K_{\omega}\cdot K_{\omega}<0,\\
0& \hbox{ if $K_{\omega}\cdot [\omega]=0$ and} & K_{\omega}\cdot K_{\omega}=0,\\
1& \hbox{ if $K_{\omega}\cdot [\omega]> 0$ and} & K_{\omega}\cdot K_{\omega}=0,\\
2& \hbox{ if $K_{\omega}\cdot [\omega]>0$ and} & K_{\omega}\cdot K_{\omega}>0.\\
\end{array}
\end{cases}
$$

The Kodaira dimension of a non-minimal manifold is defined to be
that of any of its minimal models.
\end{definition}

Here $K_{\omega}$ is defined as the first Chern class of the
cotangent bundle for any almost complex structure compatible with
$\omega$.

 We here offer an interpretation of $\kappa^s$ which relates it to
the  $2-$dimensional $\kappa^t$.
 Define the (symplectic) Kodaira dimension for a number $k$  (or equivalently, a top
dimensional cohomology class of a closed oriented manifold) in the
following way:

\begin{equation}\label{kappa(r)}\kappa^s(r)=\begin{cases} \begin{array}{lll}
-\infty & \hbox{ if $r<0$},\\
0& \hbox{ if $r=0$},\\
1& \hbox{ if $r> 0$}.\\
\end{array}
\end{cases}
\end{equation}
%
Then for a $2-$dimensional manifold $F^2$, we have
$$\kappa^t(F^2)=\kappa^s(-e(F^2))=\kappa^s(-\chi(F^2)),$$
where $\chi$ denotes the Euler characteristic.
 Furthermore, for a $4-$dimensional minimal symplectic manifold
 $(M^4, \omega)$,
\begin{equation} \label{add} \kappa^s(M^4,
\omega)=\kappa^s(K_{\omega}^2)+\kappa^s(K_{\omega}\cdot [\omega]).
\end{equation}

 We further  make a couple of easy observations based on \eqref{add}.

\begin{lemma}\label{ss'}
Let $(M^4, \omega)$ be a minimal symplectic manifold. If
$K_{\omega}^2< 0$, then $\kappa^s(M^4,
\omega)=\kappa^s(K_{\omega}^2)=-\infty$. If $K_{\omega}^2\geq 0$,
then
\begin{equation}
\kappa^s(\kappa^s(M^4, \omega))=\kappa^s(K_{\omega}\cdot [\omega]).
\end{equation}
\end{lemma}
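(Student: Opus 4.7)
The plan is to read both statements off of the additivity formula \eqref{add} and the numerical definition \eqref{kappa(r)} by a straightforward case analysis, with essentially one honest ingredient hiding inside.

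For the first assertion, $K_\omega^2<0$ gives $\kappa^s(K_\omega^2)=-\infty$ directly from \eqref{kappa(r)}. Combined with \eqref{add} and the convention that $-\infty$ absorbs sums, this forces $\kappa^s(M^4,\omega)=-\infty$ as well. (One may equivalently read this straight off the first clause of Definition \ref{sym Kod'}.)

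For the second assertion, I would write $a=\kappa^s(K_\omega^2)\in\{0,1\}$, using the hypothesis $K_\omega^2\geq 0$, and $b=\kappa^s(K_\omega\cdot[\omega])\in\{-\infty,0,1\}$. By \eqref{add} we have $\kappa^s(M^4,\omega)=a+b$, and the goal is to check $\kappa^s(a+b)=b$. I would split on $b$. If $b=-\infty$, then $a+b=-\infty$ and $\kappa^s(-\infty)=-\infty=b$. If $b=1$, then $a+b\in\{1,2\}$ and $\kappa^s$ of either is $1=b$. In the remaining subcase $b=0$, i.e.\ $K_\omega\cdot[\omega]=0$, the identity requires $a=0$, equivalently $K_\omega^2=0$.

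The main (in fact, only) point with any content is this last subcase: one must rule out $K_\omega\cdot[\omega]=0$ together with $K_\omega^2>0$ for a minimal symplectic $4$-manifold. This is exactly the statement that the four cases in Definition \ref{sym Kod'} are exhaustive, and it is already implicit in the validity of \eqref{add}; it also follows directly from the light cone lemma for the intersection form on $H^2(M^4;\mathbb{R})$ applied to $K_\omega$ and the symplectic class $[\omega]$ inside the positive cone. Granting this, we get $K_\omega^2=0$, so $a=0$, $a+b=0$, and $\kappa^s(0)=0=b$, finishing the verification.
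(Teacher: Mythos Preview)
Your proof is correct and follows exactly the approach the paper indicates: the lemma is stated there as an ``easy observation based on \eqref{add}'' with no further argument, and your case analysis on $a=\kappa^s(K_\omega^2)$ and $b=\kappa^s(K_\omega\cdot[\omega])$ is precisely how one unpacks that observation. You are in fact more careful than the paper in isolating the one substantive point, namely that $K_\omega\cdot[\omega]=0$ with $K_\omega^2>0$ is excluded for minimal $(M,\omega)$, and you rightly note this is built into the well-definedness of Definition~\ref{sym Kod'} and of \eqref{add}.

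One small caveat: your alternative justification via the light cone lemma is only valid when $b^+(M)=1$; for $b^+>1$ two positive-square classes can be orthogonal, so the exclusion of this case ultimately rests on Taubes' SW results (as established in \cite{L1}) rather than on linear algebra alone. Since your primary appeal is to the well-definedness of Definition~\ref{sym Kod'}, this does not affect the validity of your argument.
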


 Due to the properties of $\kappa^s$ listed in \cite{L1}, such as
   the diffeomorphism invariance of $\kappa^s$, we can yet regard
   $\kappa^s$ as an invariant of a large class of smooth $4-$manifolds in the following
   way.

\begin{definition} \label{sym Kod} Suppose that $M^4$ is a $4-$dimensional closed,
oriented  manifold admitting symplectic structures (compatible with
the orientation). $M^4$ is said to have symplectic Kodaira dimension
$\kappa^s=-\infty$ if $M^4$ is rational or ruled.

Otherwise, first suppose  that $M^4$ is smoothly minimal. Then the
Kodaira dimension $\kappa^s$ of $M^4$ is defined   as follows:

\[
\kappa^s(M^4)=\kappa^s(M^4,\omega)=\left\{\begin{array}{ll}
0& \hbox{ if $K_\omega$ is torsion},\\
1& \hbox{ if $K_\omega$ is non-torsion
but $K_\omega^2=0$},\\
2& \hbox{ if $K_\omega^2>0$}.\\
\end{array}\right.
\]
Here $\omega$ is any symplectic form on $M^4$ compatible with the
orientation.

For a general $M^4$, $\kappa^s(M^4)$ is defined to be
$\kappa^s(N^4)$, where $N^4$ is a smooth minimal model of $M^4$.
\end{definition}

Here a rational $4-$manifold is $S^2\times S^2$, or ${\mathbb
{CP}^2}\# k{\overline{\mathbb {CP}^2}}$ for some non-negative
integer $k$. A ruled $4-$manifold is the connected sum of a number
of (possibly zero) $\overline {\mathbb {CP}^2}$ with an $S^2-$bundle
over a Riemann surface.

It was verified in \cite{DZ} that $\kappa^s=\kappa^h$ whenever both
are defined. In fact it was shown earlier in \cite{FM} that
$\kappa^h(M^4, J)$ (even the plurigenera) only depends on the
oriented diffeomorphism type of $M^4$.

 LeBrun
(\cite{LeB2}) calculated $Y(M^4)$ when $M^4$ admits a  K\"ahler
structure, from which he concluded that \eqref{yamabeinv} completely
determines $\kappa^h$. As $\kappa^s=\kappa^h$ for a K\"ahler
surface,
 we can rephrase LeBrun's calculation
 in the following way: If $M^4$ admits a K\"ahler structure,
 then
\begin{equation}\label{lebrun}  \kappa^s(M^4)=\left\{\begin{array}{ll}
-\infty & \hbox{ if $Y(M^4)>0$},\\
0& \hbox{ if $Y(M^4)=0$ and $0$ is attainable by a metric},\\
1& \hbox{ if $Y(M^4)=0$ and $0$ is not attainable},\\
2& \hbox{ if $Y(M^4)<0$}.
\end{array}\right.
\end{equation}

However, \eqref{lebrun} does not determine $\kappa^s(M^4)$ for all
symplectic $M^4$: All $T^2-$bundle over $T^2$ have $\kappa^s=0$ (see
\cite{L1}) while most of them do not have any zero scalar curvature
metrics. But the question of LeBrun in \cite{LeB1} still makes
sense:
 if $M^4$ admits a symplectic structure and $Y(M^4)<0$, is $\kappa^s(M^4)=2$?

A related question is whether we can extend $\kappa^s$ and
$\kappa^h$ to $\kappa^d$ for all smooth $4-$manifolds (here $d$
standing for diffeomorphic).


\subsubsection{Higher Dimension}\label{5D}
 In higher dimension,  Kodaira dimension is only defined for
complex manifolds. And $\kappa^h$ is known not to be a
diffeomorphism invariant. Here is a specific example following
\cite{R}.


Consider a Fano surface $(M^4=\mathbb {CP}^2\sharp
5\overline{\mathbb {CP}^2}, J_1)$, and  a complex surface $(N^4,
J_2) $ of general type homeomorphic to $M^4$ as constructed by J.
Park et al(\cite{PPS}). Then $(M^4, J_1) \times (T^2, j)$ and $(N^4,
J_2)\times (T^2, j)$ are complex manifolds, and they are
diffeomorphic by the $s-$cobordism theorem (as they are
$h-$cobordant and their Whitehead groups vanish, for details see
\cite{R}). However, their complex Kodaira dimensions are different
due to the additivity property of $\kappa^h$ for a product.
Similarly,   the pair of diffeomorphic $5-$manifolds $M^4\times S^1$
and $N^4\times S^1$ tells us that, there is no smoothly invariant
definition of Kodaira dimension in dimension $5$  if we require the
very natural
 additivity for a product manifold.

Thus we can only expect to have a notion of Kodaira dimension for
manifolds with some structures such as complex structures or
symplectic structures (for the latter case see the proposal in
\cite{LR}).

\subsection{Additivity for a fiber bundle}
We discuss additivity of  the Kodaira dimensions $\kappa^h,
\kappa^t, \kappa^s$  for appropriate classes of fiber bundles.
\subsubsection{Additivity for $\kappa^h$}
Let us start with the holomorphic Kodaira dimension $\kappa^h$. A
classical theorem says that the additivity holds for a holomorphic
fiber bundle (see Theorem 15.1 in \cite{Ue} for example).
Especially, $\kappa^h$ is covering invariant.

\subsubsection{Covering invariance} We start with  fiber bundles with
$0-$dimensional fibers, namely, unramified coverings.

\begin{prop}\label{st}
The Kodaira dimensions $ \kappa^t, \kappa^s$ are  covering
invariant.
\end{prop}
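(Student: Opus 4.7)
The plan is to handle the proposition dimension by dimension. In dimensions $0$ and $1$, Definition \ref{01} gives $\kappa^t \equiv 0$, so the claim is trivial. In dimension $2$, Definition \ref{2dim} makes $\kappa^t(M^2)$ a function of $\mathrm{sign}(\chi(M^2))$; since the Euler characteristic is multiplicative under a degree-$d$ cover, the sign, and hence $\kappa^t$, is preserved.

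In dimension $3$, I would use the fact that any finite cover of a Thurston-geometric piece admits the same geometry (the universal cover is unchanged) and that the prime and toral decompositions are compatible with finite coverings, in the sense that each geometric piece of the cover $\tilde M^3$ is itself a finite cover of a piece of $M^3$. The category $(-\infty, 0, 1)$ is therefore the same for corresponding pieces, and since Definition \ref{t} takes the maximum category over all pieces, $\kappa^t$ is preserved.

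In dimension $4$, let $\pi:\tilde M\to M$ be a finite cover of degree $d$ and put $\tilde\omega = \pi^*\omega$. Pullback of the canonical class gives $K_{\tilde\omega} = \pi^*K_\omega$, so
\[
K_{\tilde\omega}^2 = d\, K_\omega^2, \qquad K_{\tilde\omega}\cdot[\tilde\omega] = d\, K_\omega\cdot[\omega],
\]
and the signs of the two intersection numbers are preserved. When $(M,\omega)$ is minimal and not rational or ruled, Definition \ref{sym Kod'} reads $\kappa^s$ directly off these signs (compare \eqref{add}), so $\kappa^s(\tilde M,\tilde\omega) = \kappa^s(M,\omega)$ will follow once $(\tilde M,\tilde\omega)$ is also minimal and not rational or ruled.

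The main obstacle is therefore reducing to the minimal, non-rational/non-ruled case on both sides. I would prove as a key lemma that a finite cover of a minimal symplectic $4$-manifold (other than rational or ruled) is itself minimal: because $S^2$ is simply connected, the preimage of a symplectic $(-1)$-sphere is $d$ disjoint symplectic $(-1)$-spheres, and the converse is handled by averaging a downstairs $(-1)$-class over the deck group and invoking the symplectic-to-smooth equivalence $\mathcal E_\omega = \mathcal E_M$ quoted before Definition \ref{sym Kod'}. Covers of rational surfaces are trivial by simple connectedness, while covers of ruled surfaces are ruled because both the $S^2$-fibration and the exceptional divisors lift; conversely, a manifold covered by a rational or ruled surface has $\kappa^s = -\infty$ by the sign computation on the minimal model of the cover. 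Combining these reductions with the minimal-case sign computation yields $\kappa^s(\tilde M) = \kappa^s(M)$ in general.
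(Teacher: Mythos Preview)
Your treatment of dimensions $0$--$3$ matches the paper's, and the overall architecture in dimension $4$ (reduce to the minimal case, then read off signs via $K_{\tilde\omega}=\pi^*K_\omega$) is also the same. The genuine gap is in the hard direction of your minimality lemma: the claim that if $(\tilde M,\tilde\omega)$ carries a symplectic $(-1)$--class $\tilde E$ then one can produce a $(-1)$--class downstairs by ``averaging $\tilde E$ over the deck group.''

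This averaging does not yield an element of $\mathcal E_M$. Even when the cover is regular with deck group $G$, the invariant class $\sum_{g\in G} g_*\tilde E=\pi^*e$ has $K_\omega\cdot e=-1$ and $\omega\cdot e>0$, but its square is
\[
d\,e^2=\Bigl(\sum_g g_*\tilde E\Bigr)^2=-d+\sum_{g\neq h}(g_*\tilde E)\cdot(h_*\tilde E),
\]
and the cross terms are uncontrolled; already when $\tilde E$ is $G$--invariant one gets $e^2=-d$, not $-1$. More fundamentally, nothing in this construction forces $e$ to be represented by a smoothly embedded sphere, so the equivalence $\mathcal E_\omega=\mathcal E_M$ cannot be invoked. (For non-regular covers there is the additional issue that no deck group acts transitively on fibers.)

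The paper closes this gap by an entirely different mechanism: for $\kappa^s(M,\omega)\ge 0$ one uses Taubes' theorem that $K_\omega$ is represented by a $J$--holomorphic submanifold $C$ with no sphere components; the pullback $\tilde C=f^{-1}(C)$ represents $K_{\tilde\omega}$ and still has no sphere components. A putative $(-1)$--class $\tilde E$ on $\tilde M$ would then be represented by a $\tilde J$--holomorphic curve all of whose components have genus $0$, hence share no component with $\tilde C$, and positivity of intersection forces $\tilde E\cdot K_{\tilde\omega}\ge 0$, contradicting $\tilde E\cdot K_{\tilde\omega}=-1$. You should replace the averaging step by this Taubes/positivity argument (or something equivalent).
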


\begin{proof}

For $0-$ and $1-$manifolds, it is obvious. For $2-$dimensional
manifolds it follows from the fact that $\chi(\widetilde
M)=n\chi(M)$ if $f:\widetilde M\to M$ is a degree $n$ covering map.
For $3-$dimensional manifolds, it is more or less clear from the
definition and was verified in \cite{Z}.

It remains to check $\kappa^s$. First of all,  if $f:\tilde M^4\to
M^4$ is a covering map and $\omega$ is a symplectic form on $M^4$,
then $f^*\omega$ is a symplectic form on $\tilde M^4$, and thus
$\kappa^s(\tilde M^4)$ is defined.

One characterization of $\kappa^s=-\infty$ manifolds is the
existence of an embedded symplectic sphere with non-negative
self-intersection (\cite{Mcjams}, \cite{LL1}). Suppose
$\kappa^s(M^4, \omega)=-\infty$ and $F\subset(M^4, \omega)$ is an
embedded symplectic sphere with $[F]^2\geq 0$. As $F$ is simply
connected, $f^{-1}(F)\subset (\tilde M^4, f^*\omega)$ consists of
$l=\deg(f)$ symplectic spheres, each still with self-intersection
$[F]^2$. Thus $\kappa^s(\tilde M^4, f^*\omega)=-\infty=\kappa^s(M^4,
\omega)$.

In fact, we can easily enumerate all the coverings in the case
$\kappa^s(M^4, \omega)=\infty$. Assume first that $(M^4, \omega)$ is
minimal. Then $M^4=\mathbb {CP}^2, S^2\times S^2$ or an $S^2-$bundle
over $\Sigma_{h\geq 1}$. If $M^4=\mathbb {CP}^2, S^2\times S^2$,
then so is $\tilde M^4$. If $M^4$ is an $S^2-$bundle over
$\Sigma_{h\geq 1}$, then $\tilde M^4$ is  an $S^2-$bundle over
$\Sigma_{h'\geq 1}$, induced by a covering $\Sigma_{h'}\to
\Sigma_h$. In particular,  $(\tilde M^4, f^*\omega)$ is still
minimal.

For a non-minimal $(M^4, \omega)$ we have the following general
observation:
 When $(M^4, \omega)$ is a blow up of $(N^4, \tau)$ around a symplectic
ball $B^4$, we observe that  $(\tilde M^4, f^*{\omega})$ is the blow
up of $(\tilde N^4, g^*\tau)$, where $\tilde N^4$ is obtained by
gluing $\deg(f)$ copies of $B^4$ to $\tilde f^{-1}(N^4-B^4)$, and
$g:\tilde N^4 \to N^4$ is the obvious covering map.

To prove \eqref{ss} when $\kappa^s(M, \omega)\geq 0$, we need the
following fact.

\begin{lemma}\label{minmin}
 $(\tilde M^4, f^*\omega)$ is minimal if and only if $(M^4,
\omega)$ is minimal.
\end{lemma}

Let us first assume Lemma \ref{minmin}.
 Using the fact
$K_{f^*\omega}=f^*K_{\omega}$, we have
$$K_{f^*\omega}\cdot [f^*\omega]=\deg(f) \, K_{\omega}\cdot
[\omega], \quad K_{f^*\omega}\cdot K_{f^*\omega}=\deg (f)\,
K_{\omega}\cdot K_{\omega}.$$ Together with Lemma \ref{minmin}, it
follows that \begin{equation} \label{ss} \kappa^s(\tilde M^4,
f^*\omega)=\kappa^s(M^4, \omega)\end{equation} when $(M^4, \omega)$
is minimal.

Now,  \eqref{ss} for a general $(M^4, \omega)$ is a consequence of
the  observation made before Lemma \ref{minmin}.

It only remains to prove Lemma \ref{minmin}.
\begin{proof}
 Suppose $(M^4, \omega)$
is not minimal. Then there is a symplectic $-1$ sphere $S$ in $(M^4,
\omega)$. As $S$ is simply connected, $f^{-1}(S)\subset (\tilde M^4,
f^*\omega)$ consists of $l=\deg(f)$ symplectic spheres, each still
with self-intersection $-1$.

Suppose $(M^4, \omega)$ is minimal. We want to prove that $(\tilde
M^4, f^*\omega)$ is also minimal. The case $\kappa^s(M^4,
\omega)=-\infty$ is already settled. When $\kappa^s(M^4, \omega)\geq
0$, for a generic $\omega-$compatible almost complex structure $J$,
according to Taubes (\cite{T}), $K_{\omega}$ is represented by a
$J-$holomorphic submanifold $C$, possibly disconnected and empty,
but without sphere components. Let $\tilde J=f^*J$. Then  $\tilde
C=f^{-1}(C)$ is  a $\tilde J-$holomorphic submanifold of $(\tilde
M^4, \tilde J)$ representing $K_{f^*\omega}$. Notice that $\tilde C$
still has no sphere components. If $(\tilde M^4, f^*\omega)$ is not
minimal and $\tilde E\in \mathcal E_{f^*\omega}$, then there is a
$\tilde J-$holomorphic curve $V$ in the class of $\tilde E$. The
curve $V$ could be singular and reducible, but every component of
$V$ has to have genus $0$. In particular,   $V$ and $\tilde C$ have
no common components. By the positivity of intersection of distinct
irreducible pseudo-holomorphic curves, $[V]\cdot [\tilde C]\geq 0$.
But this contradicts to  $K_{f^*\omega}\cdot \tilde E=-1$.
\end{proof}


\end{proof}




We note that the notion of Kodaira dimension does not depend on the
orientation of the manifold in dimension at most $3$. So we could
extend it to a connected non-orientable manifold up to dimension $3$
using its unique orientable covering.

Let us mention that the sign of the Yamabe invariant $Y(M)$ is
generally not a covering invariant.  In dimension $4$, LeBrun in
\cite{LeB} constructed a reducible non-symplectic manifold $M^4$
with $Y(M^4)<0$, whose universal covering is $k \mathbb {CP}^2\sharp
l\overline{\mathbb {CP}^2}$, hence having positive Yamabe invariant.
This example also shows that the condition that $M$ admits
symplectic structures in Lemma \ref{minmin} is necessary.


\subsubsection{Bundles in dimensions at most four} \label{3D}

The following is essentially contained in \cite{Z}.
\begin{prop} \label{21}
$\kappa^t$ is additive for any fiber bundle in dimension at most
$3$.
\end{prop}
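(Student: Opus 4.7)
The plan is a case analysis on the dimensions of $F$ and $B$ in the bundle $F\to E\to B$ with $\dim E\le 3$. If either $F$ or $B$ is $0$-dimensional then $E$ is diffeomorphic to the other factor and the asserted additivity $\kappa^t(E)=\kappa^t(F)+\kappa^t(B)$ is trivial since $\kappa^t(\mathrm{pt})=0$ by Definition \ref{01}. This reduces the problem to three nontrivial situations: $S^1\to T^2\to S^1$ in dimension two, an $S^1$-bundle over a closed orientable surface $\Sigma_g$ in dimension three, and a $\Sigma_g$-bundle over $S^1$ in dimension three. The two-dimensional case is immediate from $\chi(T^2)=0$ and Definition \ref{2dim}.

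For the three-dimensional cases, since $\kappa^t(S^1)=0$, additivity amounts to $\kappa^t(E)=\kappa^t(\Sigma_g)$. My strategy is to identify the Thurston geometry of $E$ in each subcase and then read off $\kappa^t(E)$ via Definition \ref{t}.

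For an $S^1$-bundle over $\Sigma_g$: when $g=0$ the total space is a lens space (including $S^3$ and $S^1\times S^2$), carrying $S^3$ or $S^2\times\mathbb{R}$ geometry and hence category $-\infty$; when $g=1$ the total space has $\mathbb{E}^3$ geometry (zero Euler number) or Nil geometry (nonzero Euler number), which is category $0$; when $g\ge 2$ it has $\mathbb{H}^2\times\mathbb{R}$ or $\widetilde{SL_2(\mathbb{R})}$ geometry, which is category $1$. For a $\Sigma_g$-bundle over $S^1$: when $g=0$ the total space is the trivial or twisted $S^2$-bundle with $S^2\times\mathbb{R}$ geometry, hence category $-\infty$; when $g=1$ the total space is a torus bundle whose geometry is Euclidean, Nil, or Sol according to whether the monodromy in $SL(2,\mathbb{Z})$ is of finite order, parabolic, or hyperbolic, in all three cases category $0$; when $g\ge 2$ the Nielsen--Thurston classification combined with Thurston's hyperbolization theorem for fibered $3$-manifolds gives either a geometric structure on $E$ in category $1$ (periodic monodromy yielding $\mathbb{H}^2\times\mathbb{R}$, pseudo-Anosov monodromy yielding $\mathbb{H}^3$) or a toroidal decomposition in which at least one piece lies in category $1$ (reducible monodromy), so $\kappa^t(E)=1$ in every subcase.

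The main friction point is the reducible monodromy subcase for $g\ge 2$, since there $E$ is not itself geometric and one has to control its JSJ decomposition: I would cut $\Sigma_g$ along the canonical invariant multicurve of the monodromy, observe that each pseudo-Anosov block produces a finite-volume hyperbolic mapping torus (and each periodic block produces an $\mathbb{H}^2\times\mathbb{R}$ or Seifert piece in category $1$), and conclude that the decomposition contains a category-$1$ piece as required by Definition \ref{t}. All the remaining subcases are direct applications of the standard dictionary between bundle data and Thurston geometry, and the entire verification is essentially the one carried out in \cite{Z}.
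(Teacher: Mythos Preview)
Your reduction of the $0$-dimensional fiber case is not quite right. When the fiber $F$ is $0$-dimensional but has more than one point, the projection $E\to B$ is a nontrivial finite covering, and $E$ is \emph{not} in general diffeomorphic to $B$ (think of a degree-$2$ cover of $\Sigma_2$). The paper handles this case by invoking Proposition~\ref{st} (covering invariance of $\kappa^t$), not by claiming $E\cong B$. Your appeal to ``$\kappa^t(\mathrm{pt})=0$'' suggests you were implicitly assuming the fiber is connected; you should either make that assumption explicit and then treat finite covers separately, or simply cite Proposition~\ref{st} for the $0$-dimensional fiber case as the paper does.

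Apart from this, your argument follows the same route as the paper: the paper disposes of the $0$- and $1$-dimensional cases quickly and then refers the two three-dimensional bundle types (circle bundles over surfaces, surface bundles over circles) to \cite{Z}. You have spelled out in more detail the geometric classification that \cite{Z} presumably contains---matching bundle data to Thurston geometry type and reading off the category in Definition~\ref{t}. Your treatment of the reducible-monodromy subcase for $g\ge 2$ via the JSJ decomposition along the invariant multicurve is the standard one and is correct; the paper does not reproduce this but points to \cite{Z} and to the Seifert-fibration discussion in \S\ref{seifert}.
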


The statement is obvious when the base is  $0$ dimensional. When
fibers are $0-$dimensional, it is just Proposition \ref{st}.

It is also obvious that $\kappa^t$ is additive for any circle bundle
when the total space is of dimension $1$ or $2$, even if the bundle
is not orientable.

There are two kinds of bundles in dimension $3$: circle bundles over
surface and surface bundles over circle. In both cases the
additivity of $\kappa^t$ is shown in \cite{Z}. Circle bundles are
special Seifert fiber spaces.  See \ref{seifert} for related
discussions.

In dimension $4$ we have the following additivity results.

\begin{prop}\label{31}
Suppose $M^3\times S^1$ has a symplectic (complex) structure, then
$\kappa^{s(h)}(M^3\times
S^1)=\kappa^t(S^1)+\kappa^t(M^3)=\kappa^t(M^3)$.
\end{prop}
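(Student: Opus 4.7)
The first equality in the statement is immediate from Definition \ref{01}, which assigns $\kappa^t(S^1) = 0$, so it suffices to verify $\kappa^{s(h)}(M^3\times S^1) = \kappa^t(M^3)$. The plan is to classify the $3$-manifolds $M^3$ for which $M^3\times S^1$ admits a symplectic (resp.\ complex) structure, then match both sides case by case according to the fibration type of $M^3$.

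For the symplectic case I would invoke the theorem of Friedl--Vidussi that $M^3\times S^1$ is symplectic if and only if $M^3$ fibers over $S^1$; for the complex case I would invoke Wall's classification of closed complex surfaces diffeomorphic to $M^3\times S^1$, which consists of Hopf surfaces, $S^2$-bundles over $T^2$, primary and secondary Kodaira surfaces, the $4$-torus, and certain properly elliptic surfaces. In either setting $M^3$ is $S^3$ or a $\Sigma_g$-bundle over $S^1$, and I split by the fiber genus $g$. When $M^3\in\{S^3,S^2\times S^1\}$, the product $M^3\times S^1$ is a Hopf surface or $S^2\times T^2$, both of Kodaira dimension $-\infty$ (class VII or ruled); meanwhile $M^3$ has geometry in category $-\infty$, so $\kappa^t(M^3) = -\infty$. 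When $g=1$, $M^3\times S^1$ is a $T^2$-bundle over $T^2$ with $K\cdot[\omega] = 0 = K^2$ (see \cite{L1}), so $\kappa^{s(h)} = 0$; on the topological side, $M^3$ carries one of the geometries $\mathbb{E}^3$, Nil, or Sol, each in category $0$, so $\kappa^t(M^3) = 0$. When $g\geq 2$, the bundle formula gives $K_E = K^{vert} + \pi^* K_{T^2} = K^{vert}$, and the identities $K^{vert}\cdot [F] = 2g-2 > 0$, $[F]^2 = 0$ yield $K^2 = 0$ and $K\cdot[\omega] > 0$, hence $\kappa^{s(h)} = 1$ by Definition \ref{sym Kod'}; every JSJ piece of $M^3$ is then hyperbolic or Seifert-fibered with hyperbolic base, all in category $1$, so $\kappa^t(M^3) = 1$.

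The main obstacle is the classification step: showing that symplecticness (resp.\ complexness) of $M^3\times S^1$ forces $M^3$ into this narrow list of fibered $3$-manifolds is the deep input, coming from Seiberg--Witten and monopole Floer theory in the symplectic direction and from the Kodaira--Wall classification of complex surfaces in the complex direction. Once that is granted, the remaining identification of $\kappa^{s(h)}$ with $\kappa^t(M^3)$ reduces to the routine checks on $K^2$ and $K\cdot [\omega]$ sketched above, together with a reading off of the Thurston categories of the corresponding $3$-manifolds. Covering invariance (Proposition \ref{st}) allows me to pass to finite covers where convenient, e.g.\ to replace a Seifert fibered piece by its trivial $S^1$-bundle cover without changing either side of the equation.
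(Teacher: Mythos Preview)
Your overall strategy---reduce to Friedl--Vidussi in the symplectic case and to the Kodaira--Wall/Etg\"u classification in the complex case, then match Kodaira dimensions case by case---is exactly what the paper indicates (it defers the proof to \cite{Z}, citing \cite{FV} and \cite{E}). In the symplectic case your argument is fine; in fact once $M^3$ fibers over $S^1$ with fiber $\Sigma_g$, you can bypass the direct computation of $K^2$ and $K\cdot[\omega]$ and simply invoke Propositions~\ref{21} and~\ref{bundle}: $\kappa^s(M^3\times S^1)=\kappa^t(T^2)+\kappa^t(\Sigma_g)=\kappa^t(\Sigma_g)=\kappa^t(S^1)+\kappa^t(\Sigma_g)=\kappa^t(M^3)$.

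There is, however, a genuine gap in your treatment of the complex case. The assertion that ``in either setting $M^3$ is $S^3$ or a $\Sigma_g$-bundle over $S^1$'' is false for complex structures: what Etg\"u \cite{E} (and Wall \cite{W}) actually give is that $M^3$ is Seifert fibered, and many Seifert fibered $3$-manifolds neither fiber over $S^1$ nor equal $S^3$. Lens spaces $L(p,q)$ (whose products with $S^1$ are Hopf surfaces) and nontrivial circle bundles over $\Sigma_{h\ge 2}$ with nonzero Euler number (whose products with $S^1$ are properly elliptic surfaces with $\widetilde{SL_2}\times\mathbb{R}$ geometry) are concrete examples. Your proposed remedy---passing to a ``trivial $S^1$-bundle cover''---does not work either, since the Euler number of a circle bundle is multiplicative under finite covers and hence a nontrivial bundle is never finitely covered by a trivial one. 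The correct bookkeeping is to split by the Thurston geometry of the Seifert fibered $M^3$ (equivalently, by the genus of the base orbifold after passing to a cover with smooth base), matching $S^3,\,S^2\times\mathbb{R}$ to Hopf/ruled ($\kappa^h=-\infty$), $\mathbb{E}^3,\,\mathrm{Nil}$ to tori/hyperelliptic/Kodaira surfaces ($\kappa^h=0$), and $\mathbb{H}^2\times\mathbb{R},\,\widetilde{SL_2}$ to properly elliptic surfaces ($\kappa^h=1$). With that adjustment your case analysis goes through.
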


\begin{prop} \label{bundle}
Suppose $M^4$ is a surface bundle over surface and it has a
symplectic (complex) structure, then
$$\kappa^{s(h)}(M^4)=\kappa^t(base)+\kappa^t(fiber).$$
\end{prop}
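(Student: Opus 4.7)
The holomorphic case is the classical additivity theorem for locally trivial fiber bundles (Theorem 15.1 of \cite{Ue}). For the symplectic case, the plan is a case analysis on the genera $g = g(F), h = g(B)$, using the additivity formula \eqref{add} in the main range, with the convention $-\infty + n = -\infty$.

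First dispose of the cases involving a sphere. When $g = 0$, $M^4$ is an $S^2$-bundle over a surface, hence ruled with $\kappa^s = -\infty$. When $g \geq 2$ and $h = 0$, the Earle--Eells theorem (the identity component of $\mathrm{Diff}^+(\Sigma_g)$ is contractible for $g \geq 2$) forces the bundle to be trivial, so $M^4 = \Sigma_g \times S^2$ is ruled. For $g = 1, h = 0$, one shows that the only $T^2$-bundle over $S^2$ admitting a symplectic structure is $T^2 \times S^2$, again ruled. In all these the right-hand side is $-\infty$ since one summand is $-\infty$.

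Assume henceforth $g, h \geq 1$. Then $M^4$ is smoothly minimal: a symplectic $(-1)$-sphere $E$ would satisfy $[E] \cdot [F] = 0$ (since $\pi|_E \colon S^2 \to \Sigma_h$ is null-homotopic for $h \geq 1$), forcing $E$ into a single fiber $\Sigma_g$, which is impossible for $g \geq 1$. Hence \eqref{add} applies:
\[
\kappa^s(M^4, \omega) = \kappa^s(K_\omega^2) + \kappa^s(K_\omega \cdot [\omega]).
\]
Choose an $\omega$-compatible almost complex structure $J$ preserving the vertical subbundle $T_{M/B}$. The exact sequence $0 \to T_{M/B} \to TM \to \pi^* TB \to 0$ gives $K_\omega = K_{M/B} + \pi^* K_B$, and combined with $c_2(TM) = \chi(M) = (2-2g)(2-2h)$ and $c_1^2 - 2c_2 = 3\sigma$ one obtains
\[
K_\omega^2 = 2(2g-2)(2h-2) + 3\sigma(M).
\]
The projection formula yields $\pi^* K_B \cdot [\omega] = (2h-2)a$ where $a = \int_F \omega > 0$, and a Leray--Hirsch analysis (applicable rationally since $[\omega]/a$ restricts to the generator of $H^2(F;\mathbb{Q})$) produces $K_{M/B} \cdot [\omega]$.

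The sub-cases are then verified separately. When $g = 1$ or $h = 1$, $\sigma(M) = 0$ (for $T^2$-bundles this is the classical vanishing for relatively minimal elliptic surfaces without singular fibers; for $\Sigma_g$-bundles over $T^2$, it follows from the Arakelov--Kodaira bounds combined with $\chi = 0$), so $K_\omega^2 = 0$, and $K_\omega \cdot [\omega] > 0$ exactly when at least one of $g, h \geq 2$. When $g, h \geq 2$, the Arakelov-type bound $|\sigma(M)| \leq \tfrac{2}{3}(2g-2)(2h-2)$ keeps $K_\omega^2 > 0$, and positivity of $K_\omega \cdot [\omega]$ is immediate. Each sum matches $\kappa^t(F) + \kappa^t(B)$. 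The main obstacle is the control of $\sigma(M)$ in the mixed-genus and general-type cases, which relies on structural results for surface fibrations (Arakelov inequalities and the Kodaira classification when a Kähler structure is available).
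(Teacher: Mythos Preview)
The paper does not give a self-contained argument: it defers the case of positive base genus entirely to \cite{DZ}, and for base $S^2$ simply observes that the total space is ruled (hence $\kappa^s=\kappa^h=-\infty$) or, in the complex $T^2$-fiber case with homologically trivial fiber, a Hopf surface (hence $\kappa^h=-\infty$). Your proposal instead attempts a direct computation via $K_\omega^2 = 2(2g-2)(2h-2)+3\sigma(M)$ and $K_\omega\cdot[\omega]$, which is a reasonable strategy but has a genuine gap in the decisive case $g,h\geq 2$.

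Your bound $|\sigma(M)|\leq \tfrac{2}{3}(2g-2)(2h-2)$ yields only $K_\omega^2\geq 0$, not the strict $K_\omega^2>0$ you claim: plugging in $\sigma=-\tfrac{2}{3}(2g-2)(2h-2)$ gives exactly $K_\omega^2=0$. Moreover, that two-sided bound is not a standard ``Arakelov-type'' inequality for \emph{smooth} surface bundles; the classical Arakelov inequalities are for holomorphic fibrations, and the relevant lower bound $3\sigma\geq -2\chi$ is really just $c_1^2\geq 0$, which one knows here only because $M$ is symplectic, minimal, and not rational or ruled---so invoking it to compute $\kappa^s$ is close to circular. Ruling out $K_\omega^2=0$ when $g,h\geq 2$ is precisely the nontrivial input that \cite{DZ} supplies by other means. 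Two secondary issues: in the complex case, Theorem~15.1 of \cite{Ue} requires a \emph{holomorphic} fiber bundle, whereas here the complex structure on $M^4$ need not respect the smooth fibration (the paper handles this via the diffeomorphism invariance of $\kappa^h$ in dimension~4 and the Hopf-surface observation); and your Leray--Hirsch step is unjustified, since the monodromy typically acts nontrivially on $H^1(F)$, so the hypothesis fails and the computation of $K_{M/B}\cdot[\omega]$ remains unexplained.
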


Proposition \ref{31} is contained in \cite{Z}.  For symplectic case,
it depends on  the resolution of the Taubes conjecture by  Friedl
and Vidussi (\cite{FV}). For complex case it depends on \cite{E}.
Hopefully, we can generalize it to  $M^3$ bundles over $S^1$ or
$S^1$ bundles  over $M^3$.

Proposition \ref{bundle} is  established in \cite{DZ} when the base
surface has  positive genus. When the base is $S^2$, the total space
is either a ruled manifold which is symplectic and complex and has
$\kappa^s=\kappa^h=-\infty$, or a Hopf surface which is complex and
has $\kappa^h=-\infty$ (the latter case occurs when the fiber is
$T^2$ and homologically trivial).




\section{Embedded symplectic surfaces and relative Kod. dim. in dim. 4}

 In this section $M$ denotes a smooth, oriented, closed and
connected $4-$manifold, $\omega$ denotes a symplectic form on $M$
compatible with the orientation.

We often identify a degree 2 homology class with its Poincar\'e
dual, and vice versa. We  denote by $\cdot$ the  pairing between a
degree 2 homology class and a degree 2 cohomology class, the
intersection product of two degree 2 homology classes, as well as
the cup product of two degree 2 cohomology classes.
\subsection{Embedded symplectic surfaces and
maximality}\label{minmax}

\subsubsection{Embedded symplectic surfaces}
\begin{definition}\label{formal} Suppose
 $F\subset (M, \omega)$ is a symplectically embedded
surface (possibly disconnected). Its  genus is defined by
\begin{equation}\label{adjunction}
2g(F)-2=K_{\omega}\cdot [F]+[F]^2.
\end{equation}
More generally, for a class $e\in H_2(M)$ we use \eqref{adjunction}
to define the $\omega-$genus $g_{\omega}(e)$ of $e$.
\end{definition}

If $F$ is connected, \eqref{adjunction} is just the adjunction
formula, and thus the (formal) genus $g(F)$ defined by
\eqref{adjunction} is just the usual genus of $F$. Observe also that
if $F=\sqcup F_i$ with connected components $F_i$, then
\begin{equation}
2g(F)-2=K_{\omega}\cdot [F]+[F]^2=\sum(K_{\omega}\cdot
[F_i]+[F_i]^2)=\sum(2g(F_i)-2).
\end{equation}

In particular, we have

\begin{lemma} \label{disc} Suppose $F=\sqcup F_i$ with connected components $F_i$.

(i) If each $F_i$ has positive genus, then $g(F)\geq 1$.

(ii) If $F$ admits  a degree $d$ map to a connected surface of genus
$h$, then $g(F)\geq dh-d+1$.
\end{lemma}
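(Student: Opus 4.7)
The plan is to reduce both parts to the additivity of the adjunction genus already recorded in the paragraph preceding the lemma:
\[
g(F) = \sum_{i} g(F_i) - n + 1,
\]
where $n$ denotes the number of connected components of $F$. Part (i) is then immediate: if each $g(F_i) \geq 1$, then $\sum_i g(F_i) \geq n$, whence $g(F) \geq 1$.

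For part (ii), let $f \colon F \to \Sigma_h$ be the given degree $d$ map and set $d_i := \deg(f|_{F_i})$, so that $\sum_i d_i = d$. For each component I would invoke the classical degree estimate for continuous maps between closed oriented surfaces,
\[
2 g(F_i) - 2 \geq d_i (2 h - 2),
\]
which rearranges to $g(F_i) \geq d_i h - d_i + 1$. Summing over $i$ and substituting into the formula for $g(F)$ yields
\[
g(F) \geq \sum_i (d_i h - d_i + 1) - n + 1 = dh - d + 1,
\]
as desired.

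The one subtle step is that when $d_i = 0$ the surface estimate above is vacuous and gives no control on $g(F_i)$; this is where I expect the main difficulty to lie. I would resolve it by appealing to the standing convention of the paper that sphere components of $F$ are discarded in the relative setting, so that $g(F_i) \geq 1$ holds a priori. Under that convention one has $g(F_i) \geq 1 = d_i h - d_i + 1$ whenever $d_i = 0$, so the per-component inequality holds uniformly and the summation in (ii) goes through without gaps. An alternative, slightly stronger route is to observe that a degree-zero restriction contributes $d_i = 0$ to the total but can only \emph{increase} $\sum g(F_i)$ relative to the baseline used in the summation, so the bound is preserved after possibly regrouping the terms by the subset of components on which $f$ has positive degree.
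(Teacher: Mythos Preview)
The paper gives no proof of this lemma; it is stated immediately after the identity $2g(F)-2=\sum_i(2g(F_i)-2)$ with the words ``In particular, we have'', so it is meant to be a direct consequence of that identity.  Your argument for (i) is exactly the intended one-line deduction, and your argument for (ii) via the componentwise Kneser--type estimate $2g(F_i)-2\ge d_i(2h-2)$ followed by summation is the natural route and is correct.

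You are also right to flag the $d_i=0$ case: without the positive-genus hypothesis, part (ii) is actually false as stated.  For instance, $F=T^2\sqcup S^2$ mapping to $T^2$ by the identity on the first factor and a constant on the second has $d=1$, $h=1$, but $g(F)=0<1=dh-d+1$.  The paper only invokes the lemma (in Propositions \ref{surface} and \ref{surface'}) for surfaces all of whose components have positive genus, so your resolution---treating $g(F_i)\ge 1$ as a standing hypothesis---is the correct reading, and under it your componentwise inequality holds uniformly (for $d_i>0$ by Kneser when $h\ge 1$ and trivially when $h=0$; for $d_i=0$ directly from $g(F_i)\ge 1$).  Your ``alternative route'' via regrouping is unnecessary once this is in place.
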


Recall that a degree 2 class is called GW stable in \cite{L6} if
certain GW invariant of this class is nonzero. The next lemma  is
well-known (cf. \cite {LL}, \cite {Mc2}).

\begin{lemma} \label{special} The following classes are GW stable
classes. \begin{itemize}

\item  The class of an embedded
symplectic sphere with non-negative self intersection.

\item Any symplectic $-1$ class $E\in {\mathcal E}_{M, \omega}$.

\item $K_{\omega}-E_1-\cdots-E_p$ with $E_i\ne E_j \in {\mathcal E}_{M, \omega}$
when $\kappa^s(M, \omega)\geq 0$ and $b^+>1$.

\item $2K_{\omega}-E_1-\cdots-E_p$ with $E_i\ne E_j \in {\mathcal E}_{M, \omega}$
when $\kappa^s(M, \omega)\geq 0$ and $b^+=1$.

\item $-K_{\omega}$ when $(M, \omega)$ is minimal and
$K_{\omega}^2\geq 0$.
\end{itemize}
\end{lemma}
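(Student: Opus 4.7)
The plan is to verify the five bulleted cases in sequence, with Taubes' SW = Gr identification as the central tool, supplemented by McDuff's theory of pseudo-holomorphic spheres and the Seiberg--Witten (SW) blow-up formula. The shared strategy is: for each class, exhibit a pseudo-holomorphic representative, and then either invoke automatic regularity (for embedded rational curves) or propagate a known SW nonvanishing through blow-ups.

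For bullets (1) and (2), one proceeds directly. An embedded $\omega$-symplectic sphere $F$ with $[F]^2 \geq -1$ is $J$-holomorphic for some $\omega$-tame $J$. When $[F]^2 \geq 0$, McDuff's automatic regularity for embedded pseudo-holomorphic spheres shows that the moduli space is a smooth manifold of the expected positive dimension, producing a nonzero Gromov count after imposing the requisite point constraints. When $[F]^2 = -1$, the foundational result that each $E \in \mathcal{E}_{M,\omega}$ is represented by a unique embedded $J$-holomorphic sphere for generic $J$ gives $\mathrm{Gr}(E) = \pm 1$.

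For bullets (3) and (4), the key input is Taubes' theorem that when $\kappa^s \geq 0$ the canonical class $K_\omega$ has nonzero Seiberg--Witten invariant. When $b^+ > 1$ this is unambiguous; for $b^+ = 1$ one must specify a chamber, and using $2K_\omega$ places the computation safely in the positive chamber (since $K_\omega\cdot\omega>0$ in the cases of interest). The SW blow-up formula then propagates nonvanishing to $K_\omega - E_1 - \cdots - E_p$, respectively $2K_\omega - E_1 - \cdots - E_p$: blowing down the $p$ distinct exceptional classes $E_i$ identifies these with the (twice) canonical class of the resulting minimal model, where SW is nonzero by Taubes. The SW = Gr equivalence then yields GW-stability in the original manifold.

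For bullet (5), one splits by Kodaira dimension. In the $\kappa^s = -\infty$ case, the minimal models with $K_\omega^2 \geq 0$ are $\mathbb{CP}^2$ and $S^2 \times S^2$, for which $-K_\omega$ is explicitly represented by embedded symplectic surfaces (cubics, respectively bidegree $(2,2)$ curves, in their Kähler incarnations) whose Gromov count is classical and nonzero. In the $\kappa^s \geq 0$ cases one appeals to the enumeration results of Liu--Liu in \cite{LL}. The main obstacle in the overall argument is the chamber-dependent bookkeeping behind the $b^+ = 1$ case of bullet (4), but this becomes routine once the wall-crossing formula is in place, and is why the statement uses $2K_\omega$ rather than $K_\omega$ in that regime.
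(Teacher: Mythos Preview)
The paper does not supply its own proof of this lemma; it is recorded as well-known with pointers to \cite{LL} and \cite{Mc2}. Your outline via Taubes' SW$\,=\,$Gr, automatic regularity for spheres, and the blow-up formula is the standard route, and bullets (1)--(3) are handled correctly.

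There are, however, two genuine gaps. In bullet (5), your enumeration of minimal $\kappa^s=-\infty$ manifolds with $K_\omega^2\geq 0$ omits the two $S^2$-bundles over $T^2$, which have $K_\omega^2=0$; there $-K_\omega$ is twice a square-zero torus fiber class, and one must check its Gromov invariant separately. More seriously, your appeal to \cite{LL} for the $\kappa^s\geq 0$ subcase cannot succeed: when $\kappa^s\geq 1$ one has $(-K_\omega)\cdot[\omega]<0$, so every Gromov invariant of $-K_\omega$ vanishes and the class is \emph{not} GW stable. The paper in fact only invokes this item when $\kappa^s(M,\omega)=-\infty$ (see the opening of the proof of Lemma~\ref{F=omega}), so the fifth bullet should be read with that implicit restriction rather than argued in the generality you attempt. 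In bullet (4) there is a smaller imprecision: blowing down $E_1,\dots,E_p$ need not produce a minimal manifold, and $2K_\omega-\sum E_i = 2\pi^*K_N+\sum E_i$, not $\pi^*(2K_N)$. What the blow-up formula actually yields (after translating to $c_1=3\pi^*K_N+\sum E_i$) is $Gr_M(2K_\omega-\sum E_i)=\pm Gr_N(2K_N)$, so one still needs the base case $Gr(2K_N)\neq 0$ on the possibly non-minimal $N$; this is where the wall-crossing you mention is required, not Taubes' theorem alone.
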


The following simple fact was observed in \cite{L6}.

\begin{lemma} \label{stable}
If $\alpha\in H_2(M;\mathbb Z)$ with $\alpha^2\geq 0$ is represented
by an embedded symplectic surface, then $\alpha$ pairs
non-negatively with any GW stable class.
\end{lemma}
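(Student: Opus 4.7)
The plan is to combine the positivity of intersections of distinct irreducible pseudo-holomorphic curves with the defining property of GW stability, namely the existence of a pseudo-holomorphic representative for suitably generic $\omega$-compatible almost complex structures. Everything then reduces to a short case analysis on whether $F$ appears as a component of that representative.

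Concretely, I would first choose an $\omega$-compatible almost complex structure $J$ for which $F$ is $J$-holomorphic; such $J$ form a non-empty subset of the space of tamed structures, and by a standard transversality argument they may be taken generic within this subset. Since $\beta$ is GW stable, for such a $J$ the class $\beta$ is realised by a (possibly reducible and singular) $J$-holomorphic curve, whose underlying divisor I write as $V=\sum_{i} n_{i}C_{i}$ with distinct irreducible $J$-holomorphic components $C_{i}$ and positive multiplicities $n_{i}$.

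Now I split into two cases. If $F$ is not one of the $C_{i}$, then positivity of intersections of distinct irreducible pseudo-holomorphic curves, applied exactly as in the proof of Lemma \ref{minmin}, yields $[F]\cdot [C_{i}]\ge 0$ for every $i$, hence
\[
\alpha\cdot\beta \;=\; [F]\cdot [V] \;=\; \sum_{i} n_{i}\,[F]\cdot[C_{i}] \;\ge\; 0.
\]
If instead $F=C_{i_{0}}$ for some $i_{0}$, I single out that term and write
\[
\alpha\cdot\beta \;=\; n_{i_{0}}[F]^{2} \;+\; \sum_{i\ne i_{0}} n_{i}\,[F]\cdot[C_{i}];
\]
the first summand is non-negative by the hypothesis $\alpha^{2}=[F]^{2}\ge 0$, the remaining summands by positivity of intersections, so again $\alpha\cdot\beta\ge 0$.

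The main subtlety, and the step one must be most careful about, is the simultaneous genericity of $J$: it has to be generic enough that GW stability of $\beta$ actually produces a $J$-holomorphic representative, while being constrained so that the prescribed embedded symplectic surface $F$ remains $J$-holomorphic. This is handled by the standard transversality theory for almost complex structures adapted to an embedded symplectic submanifold, of precisely the type already invoked in Lemma \ref{minmin}. With that input in place, the case analysis above delivers the inequality essentially for free; no further ingredient beyond positivity of intersections and the hypothesis $[F]^{2}\ge 0$ is needed.
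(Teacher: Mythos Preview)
Your argument is correct and is precisely the standard proof; the paper does not supply a proof of its own, merely recording the lemma as an observation from \cite{L6}.  Two minor remarks are worth making.  First, the ``simultaneous genericity'' you worry about is not really needed: nonvanishing of a Gromov--Witten invariant forces the relevant moduli space to be nonempty for \emph{every} tamed $J$ (by Gromov compactness and deformation invariance), so once $J$ is chosen to make $F$ pseudo-holomorphic, a $J$-holomorphic representative of $\beta$ already exists and no further genericity within the constrained family is required.  Second, your dichotomy ``$F\ne C_i$ for all $i$'' versus ``$F=C_{i_0}$'' tacitly treats $F$ as irreducible, hence connected; if $F$ had several components and some component $F_j$ of negative self-intersection coincided with one of the $C_i$, the corresponding term $n_i[F_j]^2$ would be negative and the hypothesis $[F]^2\ge 0$ alone would not close the argument.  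In this paper the lemma is only ever invoked for connected surfaces (or for individual components of a disconnected one), so this point does not affect any application.
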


Finally, for a possibly disconnected embedded  surface $F=\sqcup
F_i$ in $M$ with connected components $F_i$,  let $F^+$ be the union
of $F_i^+$, where
\[
{F_i}^+=\left\{\begin{array}{ll}
F_i & \hbox{ if $\kappa^t({F_i})\neq -\infty$},\\
\emptyset & \hbox{ if $\kappa^t({F_i})= -\infty$}.\\
\end{array}\right.
\]
\subsubsection{$\kappa^s(K_{\omega}\cdot [F])$ and $\kappa^s(K_{\omega}\cdot [\omega])$}

\begin{lemma} \label{F=omega}
Let $(M, \omega)$ be a minimal symplectic manifold with
$K_{\omega}^2\geq 0$. Suppose $S$ is a symplectic surface with $S^2>
0$. We further suppose that there is  a relatively minimal Lefschetz
fibration on $\tilde M=M\sharp k\overline{\mathbb {CP}^2}$ such that
the class of a fiber $\tilde S$ satisfies  $\pi_*[\tilde S]=[S]$,
where $\pi_*:H_2(\tilde M)\to H_2(M)$ is the natural homomorphism.
Then
\begin{equation}\label{sss}\kappa^s(K_{\omega}\cdot [S])=\kappa^s(\kappa^s(M,
\omega)).\end{equation}


\end{lemma}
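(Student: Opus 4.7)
The plan is to use Lemma \ref{ss'} (applicable since $K_\omega^2\ge 0$) to reduce the desired equality to the statement that $K_\omega\cdot[S]$ and $K_\omega\cdot[\omega]$ have the same sign in $\{<0,\,=0,\,>0\}$. The case $\kappa^s(M,\omega)=0$ is immediate: $K_\omega$ is torsion, so both pairings vanish.

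I would first extract structural consequences of the Lefschetz fibration. Writing $[\tilde S] = \pi^*[S] + \sum b_i E_i$ in $H_2(\tilde M)$ with the $E_i$ the blow-up exceptional classes, the condition $[\tilde S]^2 = 0$ combined with $[S]^2 > 0$ gives $\sum b_i^2 = [S]^2 > 0$, so $k\ge 1$. Relative minimality forces no $E_i$ to be a fiber component, and positivity of intersections between pseudo-holomorphic curves then yields $-b_i = E_i\cdot[\tilde S] \ge 1$ for every $i$ (also forcing the base to be $S^2$, since a sphere cannot surject onto a higher-genus surface). Combining $K_{\tilde\omega} = \pi^*K_\omega + \sum E_i$ with the fiber adjunction $K_{\tilde\omega}\cdot[\tilde S] = 2g - 2$ produces the key identity
\begin{equation*}
K_\omega\cdot[S] \;=\; \pi^*K_\omega\cdot[\tilde S] \;=\; (2g-2) + \sum b_i.
\end{equation*}

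For the sign comparison in the remaining cases I would split on $b^+(M)$. If $b^+ = 1$, Hodge index applies directly: $[S]$ and $[\omega]$ both lie in the interior of the positive cone of $H^2(M;\mathbb R)$, while $K_\omega$ (if $\kappa^s\ge 1$) or $-K_\omega$ (if $\kappa^s=-\infty$) lies in its closure thanks to $K_\omega^2\ge 0$ and the sign of $K_\omega\cdot[\omega]$, and the reverse Cauchy--Schwarz inequality on a Lorentzian space gives the required strict sign for $K_\omega\cdot[S]$. If $b^+>1$, I would invoke Lemma \ref{special}: in the case $\kappa^s\ge 1$, the pullback $\pi^*K_\omega = K_{\tilde\omega}-\sum E_i$ is GW stable in $\tilde M$, and pairing it with the symplectic-surface class $[\tilde S]$ (whose square is $0$) via Lemma \ref{stable} yields $K_\omega\cdot[S]\ge 0$; the $\kappa^s=-\infty$ case is handled symmetrically via GW stability of $-K_\omega$ on $M$.

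The main obstacle is upgrading $\ge 0$ to $>0$ in the $b^+>1$ case. Here I would use that the fibers of the Lefschetz fibration cover $\tilde M$: any pseudo-holomorphic representative $\tilde C$ of the GW-stable class is non-empty, so it must meet some fiber, and positivity of intersections then forces the pairing to be at least $1$. The delicate configuration to address is when $\tilde C$ is contained in a single fiber; either $[\tilde C] = [\tilde S]$, which immediately gives $K_\omega = [S]$ and hence $K_\omega\cdot[S] = [S]^2 > 0$, or $\tilde C$ is an irreducible component of a singular fiber, a situation that is ruled out or computed directly using the inequalities $b_i\le -1$ and the key identity above.
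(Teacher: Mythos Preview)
Your overall strategy matches the paper's: reduce via Lemma \ref{ss'} to comparing signs of $K_\omega\cdot[S]$ and $K_\omega\cdot[\omega]$, dispose of $\kappa^s=0$ trivially, and for $\kappa^s\ge 1$ use GW stability of $\pi^*K_\omega$ in $\tilde M$ together with the Lefschetz fibration to force strict positivity. Your $b^+=1$ shortcut via the light cone lemma is a genuine simplification over the paper (which runs the Lefschetz-fibration argument uniformly, using $2\pi^*K_\omega$ when $b^+=1$); since $[S]$ lies in the interior of the positive cone, any nonzero class in the closure pairs strictly positively with it, so no fibration input is needed there. Note also that the $b^+>1$, $\kappa^s=-\infty$ case is vacuous (rational/ruled implies $b^+=1$), so your symmetric treatment of it is unnecessary.

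There is, however, a real gap in your final step for $b^+>1$. Your dichotomy ``$\tilde C$ is contained in a single fiber; either $[\tilde C]=[\tilde S]$ or $\tilde C$ is an irreducible component of a singular fiber'' is not exhaustive: $\tilde C$ may be a disconnected curve whose components live in several distinct (possibly singular) fibers, or a positive multiple of the fiber class. More importantly, the tools you invoke (the identity $K_\omega\cdot[S]=(2g-2)+\sum b_i$ and $b_i\le -1$) do not by themselves rule these configurations out; that identity only records the value of $K_\omega\cdot[S]$ and gives no structural contradiction. What the paper actually uses is:
\begin{itemize}
\item[(i)] a Zariski-type statement: if every component of $\tilde C$ lies in some fiber, then $[\tilde C]^2\le 0$, with equality only when $[\tilde C]$ is a multiple of $[\tilde S]$; since $[\tilde C]^2=(\pi^*K_\omega)^2=K_\omega^2\ge 0$, this already kills $\kappa^s=2$ and forces $\pi^*K_\omega=m[\tilde S]$ when $\kappa^s=1$;
\item[(ii)] the orthogonality $\pi^*K_\omega\cdot E_i=0$, which combined with $[\tilde S]\cdot E_i=-b_i\ge 1$ forces $m=0$, contradicting that $K_\omega$ is non-torsion.
\end{itemize}
Neither observation appears in your proposal. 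Once you supply (i) and (ii), the argument closes exactly as in the paper; without them the strict inequality in the $b^+>1$ case remains unproved.
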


\begin{proof}
First of all, under the assumption that $(M, \omega)$ is minimal and
$K_{\omega}^2\geq 0$, by Lemma \ref{special}, $K_{\omega}$ or
$2K_{\omega}$ is a GW stable class if $\kappa^s(M, \omega)\geq 0$,
and $-K_{\omega}$ is a GW stable class if $\kappa^s(M,
\omega)=-\infty$. By Lemma \ref{stable}, $K_{\omega}\cdot [S]\geq 0$
if $\kappa^s(M, \omega)\geq 0$, and $K_{\omega}\cdot [S]\leq 0$ if
$\kappa^s(M, \omega)=-\infty$.

Thus, if $K_{\omega}\cdot [S]<0$, we must have $\kappa^s(M,
\omega)=-\infty$. Conversely, if $\kappa^s(M, \omega)=-\infty$,
 since $[S]^2>0$ and $K_{\omega}\cdot [S]\leq 0$, by the light cone lemma, we have $K_{\omega}\cdot
[S]<0$.

If $\kappa^s(M, \omega)=0$, then $K_{\omega}$ is a torsion class. So
$K_{\omega}\cdot [S]=0$ in this case.



To prove \eqref{sss}, what remains to show is that if $\kappa^s(M,
\omega)\geq 1$, then $K_{\omega}\cdot [S]>0$.
It is here that we need the assumption that $[S]$ lifts to the fiber
class $[\tilde S]$ of a Lefschetz fibration on $(\tilde M=M\sharp
k\overline{\mathbb {CP}^2}, \tilde \omega)$. Notice that since
$[S]^2>0$, $[S]$ itself cannot be the fiber class, thus we must have
$k>0$. Notice also that $K_{\omega}\cdot [S]=\pi^*K_{\omega}\cdot
[\tilde S]$.

In this case,  $\kappa^s(\tilde M, \tilde \omega)\geq 1$. Then
$\pi^*K_{\omega}$, or $\pi^*(2K_{\omega})$ in the case $b^+=1$, is
still a GW stable class in the blow up $(\tilde M, \tilde \omega)$.
Here $\pi^*:H^2(M)\to H^2(\tilde M)$ is the natural inclusion.
Choose an almost complex structure $J$ on $\tilde M$ making the
Lefschetz fibration $J-$holomorphic. What can a $J-$holomorphic
representative of $\pi^*K_{\omega}$ ($2\pi^*K_{\omega}$) be? If it
is in a fiber or a union of several fibers,  then its square is at
most $0$, and its square is $0$ only if it is a union of fibers.
Thus if $\kappa^s(M, \omega) =2$, this is impossible. If
$\kappa^s(M, \omega) = 1$, it still violates the fact that the
intersection number of $\pi^*K_{\omega}$ with any $-1$ class of
$(\tilde M, \tilde \omega)$ is $0$. Thus $K_{\omega}$ must have a
multi-section component. This shows that $\pi^* K_{\omega} \cdot
[\tilde S] > 0$, and hence $K_{\omega}\cdot [S]>0$.



\end{proof}
Following from Lemma \ref{ss'}, we have
\begin{cor}\label{sss'}
Under the assumption of Lemma \ref{F=omega}, \begin{equation}
\kappa^s(K_{\omega}\cdot [S])=\kappa^s(K_{\omega}\cdot [\omega]).
\end{equation}
\end{cor}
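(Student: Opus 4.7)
The plan is to obtain this corollary by simply chaining Lemma \ref{F=omega} and Lemma \ref{ss'}, since the hypotheses of Lemma \ref{F=omega} already include what is needed to invoke Lemma \ref{ss'}. Concretely, the assumption of Lemma \ref{F=omega} posits that $(M,\omega)$ is minimal with $K_\omega^2 \geq 0$, which is exactly the second clause of Lemma \ref{ss'}.

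First I would record the identity proved in Lemma \ref{F=omega},
\begin{equation}\label{prop:step1}
\kappa^s(K_\omega \cdot [S]) = \kappa^s(\kappa^s(M,\omega)),
\end{equation}
which applies verbatim under the hypotheses of the corollary. Then I would invoke Lemma \ref{ss'} in the $K_\omega^2 \geq 0$ case, which yields
\begin{equation}\label{prop:step2}
\kappa^s(\kappa^s(M,\omega)) = \kappa^s(K_\omega \cdot [\omega]).
\end{equation}
Combining \eqref{prop:step1} and \eqref{prop:step2} gives the desired equality $\kappa^s(K_\omega \cdot [S]) = \kappa^s(K_\omega \cdot [\omega])$.

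The only subtlety I would flag is a sanity check that the two lemmas are being applied on the same footing: Lemma \ref{ss'} is stated for a minimal symplectic manifold, and Lemma \ref{F=omega} explicitly assumes minimality of $(M,\omega)$ together with $K_\omega^2 \geq 0$, so there is no mismatch. Since the corollary follows directly by composing two already-proved statements, there is no real obstacle; the proof is essentially a one-line deduction, and I would present it as such rather than repeating any of the GW-stability or Lefschetz-fibration analysis from the proof of Lemma \ref{F=omega}.
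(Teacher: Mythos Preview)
Your proposal is correct and matches the paper's own argument exactly: the corollary is stated right after Lemma~\ref{F=omega} with the phrase ``Following from Lemma~\ref{ss'}, we have,'' which is precisely the chaining of \eqref{sss} with the $K_\omega^2\geq 0$ clause of Lemma~\ref{ss'} that you spell out. There is nothing more to add.
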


Any member of Lefschetz pencil on $(M, \omega)$ satisfies the
assumption of Lemma \ref{F=omega}. In this case,  Gompf (\cite{G})
showed that there is a symplectic form $\tau$ on $M$ in the positive
ray of $[S]$. It would be interesting to see  whether this remains
to be true for any $S$ as in Lemma \ref{F=omega}.


Suppose $E_i$ are the classes of symplectic $-1$ spheres in $(\tilde
M, \tilde \omega)$ that are blown down to obtain $(M, \omega)$.
Since $[\tilde S]^2=0$,
\begin{equation}
2g(\tilde S)-2= K_{\omega'}\cdot [\tilde S]=(\pi^* K_{\omega}+\sum
E_i)(\iota_*[S]-\sum ([\tilde S]\cdot E_i)E_i),
\end{equation}
where $\iota_*:H_2(M)\to H_2(\tilde M)$ is the natural inclusion.
Thus we can express $\kappa^s(K_{\omega}\cdot[\omega])$ in terms of
$g(\tilde S)$ and $c_i=[\tilde S]\cdot E_i$,
\begin{equation}\label{final}
\kappa^s(K_{\omega}\cdot[\omega])=\kappa^s(2g(\tilde S)-2-\sum c_i).
\end{equation}




\subsubsection{Maximal surfaces}
\begin{definition}\label{max'}
Suppose $F\subset (M, \omega)$ is a symplectically embedded surface
without sphere components. $F$ is called maximal if $[ F]\cdot E\ne
0$ for any $E\in \mathcal E_{\omega}$.

For a general embedded symplectic surface $F$, it is called maximal
if $F^+$ is maximal.
\end{definition}

Any member of a relatively minimal Lefschetz pencil or a fiber of a
relatively minimal Lefschetz fibration is maximal. Notice that if
$F^+=\emptyset$, then $F$ is maximal if and only if $(M, \omega)$ is
minimal.

Let $F_i$ be the connected components of an embedded symplectic
surface $F$. Because the $F_i$ are disjoint  and embedded symplectic
surfaces, we can choose an almost complex structure $J$ to make each
$F_i$ $J-$holomorphic.

\begin{claim}\label{claim} Suppose the genus of each $F_i$ is positive. Then for any $E\in
\mathcal E_{\omega}$, we can further assume that  $J$ is chosen such
that both $F$ and an embedded representative of $E$  are
$J-$holomorphic.
\end{claim}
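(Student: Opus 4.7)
\emph{Proof plan.} My plan is to apply a transversality argument within the space $\mathcal{J}(F)$ of $\omega$-compatible almost complex structures making each $F_i$ pseudoholomorphic. Since the $F_i$ are pairwise disjoint embedded symplectic surfaces, $\mathcal{J}(F)$ is non-empty and path-connected, a standard consequence of the symplectic neighborhood theorem together with a patching construction from local data near each $F_i$.

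Fix $E \in \mathcal{E}_\omega$. By Lemma \ref{special}, $E$ is a GW stable class, so Taubes' identification of Gromov and Seiberg-Witten invariants ensures that every $J \in \mathcal{J}(F)$ admits a $J$-holomorphic representative of $E$, a priori possibly reducible or singular. The positive-genus hypothesis on $F$ enters in a crucial way: since $g(F_i) \geq 1$ for every $i$, there is no non-constant holomorphic map $S^2 \to F_i$, so no rational irreducible component of a $J$-holomorphic representative of $E$ can be contained in $F$. Consequently every such component meets $M \setminus F$ on a non-empty open subset of its image, which is precisely the region where perturbations of $J$ staying inside $\mathcal{J}(F)$ are unconstrained.

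This local freedom is enough to run the McDuff-Salamon transversality machine restricted to the Fr\'echet family $\mathcal{J}(F)$: for generic $J \in \mathcal{J}(F)$ the moduli space of unparameterized simple $J$-holomorphic spheres in class $E$ is a zero-dimensional manifold, and the adjunction formula together with $E \cdot E = -1$ forces every such sphere to be embedded. Because the Gromov invariant of $E$ equals $\pm 1$ and is a deformation invariant, this moduli space is non-empty, producing the desired $J \in \mathcal{J}(F)$ together with an embedded $J$-holomorphic representative of $E$.

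The main obstacle is verifying that the transversality theorem, ordinarily formulated on the full space of $\omega$-compatible almost complex structures, continues to hold on the restricted space $\mathcal{J}(F)$. This is precisely what the positive-genus hypothesis resolves: it prevents any rational component of a representative of $E$ from being trapped inside $F$, so the standard Sard-Smale argument with $J$-perturbations supported in $M \setminus F$ goes through.
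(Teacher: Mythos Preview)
Your argument is correct and leads to the same conclusion, but it is organized differently from the paper's proof.  The paper, following Ohta--Ono, fixes $J_0\in\mathcal J(F)$ generic away from a neighborhood $U$ of $F$, takes a sequence $J_n\to J_0$ of generic structures each admitting an embedded $(-1)$--sphere $E_n$ in class $E$, and studies the Gromov limit $\sum m_iB_i$.  If no $B_i$ were trapped in $U$, then each $B_i$ would be transversal and would persist as a $J_n$--curve, contradicting $E^2=-1$ via positivity of intersection; hence some $B_i$ is (a cover of) $F$, which is impossible since $B_i$ has genus~$0$ and $F$ does not.  Your approach instead runs the McDuff--Salamon transversality package directly inside the constrained family $\mathcal J(F)$: the positive-genus hypothesis guarantees every simple rational component meets $M\setminus F$, so perturbations within $\mathcal J(F)$ suffice for regularity, and then $c_1(E)=1$ together with the nonvanishing Gromov invariant forces a single embedded sphere for generic $J\in\mathcal J(F)$.

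Both arguments hinge on exactly the same geometric fact---no rational $J$--holomorphic curve can lie inside a positive-genus $F$---but deploy it at different moments.  The paper's compactness argument is more hands-on and avoids invoking the full constrained-transversality machinery; your approach is cleaner conceptually and generalizes more readily, at the cost of implicitly using that the restricted Gromov count agrees with the unrestricted one (which you justify via the somewhere-injective-outside-$F$ observation, though the step from ``transversality holds'' to ``the moduli space is nonempty'' really uses that for generic $J\in\mathcal J(F)$ every component of a stable representative of $E$ has $c_1\geq 1$, forcing irreducibility).  One small quibble: the existence of a possibly nodal representative for \emph{every} $J$ follows from Gromov compactness applied to a sequence of generic $J_n$, not directly from Taubes' SW$\Rightarrow$Gr, which produces curves only for suitably generic $J$.
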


\begin{proof}
This can be done, for example, by Proposition $4.1$ in \cite{OO}. We
recall the argument here:  Without loss of generality, we assume
that $F$ is connected.

First, we choose a $J_0$ such that $F$ is $J_0$ holomorphic. We can
assume that $J_0$ is generic outside a small neighborhood $U$ of $F$
so that any simple $J_0$ holomorphic curve which are not contained
in $U$ are transversal. Suppose $E$ and $[F]$ can not be represented
by $J-$holomorphic curves simultaneously. Choose a sequence of $J_n$
converging to $J_0$ such that $E$ is represented by the embedded
$J_n-$holomorphic $-1-$curve $E_n$ for all $n$. Then $E_n$ converges
to the image of a stable map $\sum m_iB_i$, where $B_i$'s are
simple. Here $\sum m_i>1$.

Now we show that one of $\{B_i\}$ is contained in $U$. If not, they
are transversal by our genericness assumption of $J_0$. Hence, for
$n$ large enough, $B_i$ deform to $J_n-$holomorphic $B_i'$. Thus
$E_n$ and $\sum m_iB_i'$ are both $J_n-$holomorphic curves
representing class $E$. If $E_n$ does not appear in $\{B_i\}$, then
$-1=E^2=[E_n]\cdot \sum m_i[B_i'] \ge 0$, a contradiction. So there
is an $i$ so that $B_i=E_n$. It never happens because symplectic
area only depends on the homology class and $\sum m_i>1$.

Now, note that each component of the stable map above is of genus
$0$ and at least one of them is possibly a multiple cover of $F$,
whose genus is positive. This is impossible.
\end{proof}

Thus we can conclude

\begin{lemma} \label{max} Suppose $F\subset (M, \omega)$ is a symplectically  embedded surface
without sphere components.

$\bullet$ $F\neq \emptyset$ is  maximal in $(M, \omega)$ in the
sense of Definition \ref{max'} if and only if  $[F]\cdot E>0$ for
any $E\in \mathcal E_{\omega}$.

$\bullet$ If  $[F]\cdot E=0$ for some $E\in \mathcal E_{\omega}$,
then we can blow down a symplectic sphere in the class $E$ which is
disjoint from $F$.
\end{lemma}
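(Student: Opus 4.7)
My plan is to derive both bullets from Claim \ref{claim} together with the positivity of intersection for distinct irreducible pseudo-holomorphic curves, exploiting the no-sphere-components hypothesis to ensure that any exceptional class representative is genuinely distinct from every component of $F$.

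For the first bullet, the reverse implication is immediate from Definition \ref{max'}. For the forward implication, suppose $F$ is maximal and fix $E \in \mathcal E_\omega$. Because $F$ has no sphere components, every component of $F$ has positive genus and $F^+ = F$, so the hypothesis of Claim \ref{claim} is satisfied. Apply the claim to obtain an almost complex structure $J$ for which both $F$ and an embedded symplectic $-1$-sphere $\Sigma_E$ representing $E$ are $J$-holomorphic. Since $\Sigma_E$ is a sphere whereas every component of $F$ has positive genus, $\Sigma_E$ is not a component of $F$, so $\Sigma_E$ and $F$ share no irreducible components. Positivity of intersection then gives $[F] \cdot E \geq 0$, and maximality rules out equality, yielding $[F] \cdot E > 0$.

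For the second bullet, assume $[F] \cdot E = 0$. Apply Claim \ref{claim} once more to produce $J$ making $F$ and an embedded $J$-holomorphic representative $\Sigma_E$ of $E$ simultaneously pseudo-holomorphic. By the same distinctness argument, any geometric intersection of $\Sigma_E$ with $F$ would contribute strictly positively to $[F] \cdot E$; the vanishing of the total intersection therefore forces $\Sigma_E \cap F = \emptyset$. A standard symplectic blow-down along this $\Sigma_E$ (which lies in a neighborhood disjoint from $F$) then blows down a symplectic sphere in class $E$ while leaving $F$ intact as a symplectic surface in the blown-down manifold.

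The substantive input is Claim \ref{claim}; granted it, the rest is a clean application of positivity of intersection together with the standard local symplectic blow-down. The only conceptual point worth highlighting is that the hypothesis \emph{no sphere components} is exactly what guarantees that an exceptional $-1$-sphere is never a component of $F$, so positivity of intersection can be invoked with a strict sign. I do not expect a real obstacle; the work was already done in Claim \ref{claim}.
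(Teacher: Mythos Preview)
Your proposal is correct and is precisely the argument the paper intends: the lemma is stated as an immediate consequence of Claim \ref{claim}, and you have simply spelled out the positivity-of-intersection step and the disjointness conclusion that the paper leaves implicit. The only point worth noting is that the almost complex structure furnished by Claim \ref{claim} is $\omega$-tamed, which is what guarantees that the embedded $J$-holomorphic $-1$-sphere $\Sigma_E$ is indeed symplectic and hence can be symplectically blown down.
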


Here is another useful consequence of Claim \ref{claim}.

\begin{lemma} \label{blowdown} Suppose $(N, \sigma)$ is obtained from $(M, \omega)$
by blowing down a finite set of disjoint symplectic $-1$ spheres in
the classes $E_i$. Then for any embedded symplectic surface
$F\subset(M, \omega)$, possibly disconnected but with each component
positive genus, there is an embedded symplectic surface $F'\subset
(N, \sigma)$ with each component positive genus such that
\begin{equation}
[F]=\iota_*[F']-\sum([F]\cdot E_i) E_i.
\end{equation}
Here $\iota_*:H_2(N)\to H_2(M)$ is the natural inclusion.
\end{lemma}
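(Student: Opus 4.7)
The plan is to represent both $F$ and each of the symplectic $-1$-spheres $S_i$ in the classes $E_i$ as $J$-holomorphic curves for a common $\omega$-compatible almost complex structure $J$, then perform the symplectic blow-down, and finally resolve the resulting nodal singularities. First I would generalize Claim \ref{claim} to produce an $\omega$-compatible $J$ on $M$ making $F$ and each $S_i$ simultaneously $J$-holomorphic. The argument of Claim \ref{claim} adapts: any bubble limit of a sequence of embedded $J_n$-holomorphic $-1$-spheres in a class $E_i$ is a genus-zero stable map, so it cannot contain a multiple cover of a positive-genus component of $F$, and the mutual disjointness of the given $S_i$ prevents the distinct classes $E_i$ from interfering with each other.

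With such a $J$ in hand, positivity of intersection forces $F$ and each $S_i$ to meet at exactly $c_i = [F]\cdot E_i$ geometric points, all with positive local intersection number; after a small perturbation among $J$-holomorphic representatives we may assume all these intersections are transverse. I would then perform the simultaneous symplectic blow-down of the disjoint $S_i$ to obtain $(N,\sigma)$ with blow-down points $p_i \in N$, and let $F_0 \subset N$ denote the image of $F$ under the blow-down map. Then $F_0$ is an immersed symplectic surface whose only singularities are transverse normal crossings at the $p_i$ involving $c_i$ local sheets each.

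Next, apply the standard symplectic resolution of transverse double points at each $p_i$ (a local symplectic sum operation) to produce an embedded symplectic surface $F' \subset (N,\sigma)$. To see that every component of $F'$ has positive genus, track the resolution locally: resolving $k$ self-crossings on a single component raises its genus by $k$, while resolving $k$ transverse crossings between two distinct components of genera $g_1, g_2 \geq 1$ merges them into one component of genus $g_1 + g_2 + k - 1 \geq 1$. Iterating over the finitely many resolutions therefore preserves the positive-genus hypothesis throughout, so no spherical component is created.

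Finally, the homology identity is just the blow-up formula applied iteratively: symplectic resolution preserves the homology class, so $[F'] = \pi_*[F]$ in $N$, and under the splitting $H_2(M) = \iota_* H_2(N) \oplus \bigoplus_i \mathbb{Z}\langle E_i\rangle$ we may write $[F] = \iota_*[F'] + \sum_i m_i E_i$; pairing with $E_j$ and using $E_i\cdot E_j = -\delta_{ij}$ together with $\iota_*[F']\cdot E_j = 0$ gives $m_j = -[F]\cdot E_j$, which is exactly the stated formula. I expect the main obstacle to be Step~1 -- producing a \emph{common} $J$ making $F$ and \emph{all} of the prescribed $S_i$ holomorphic while ruling out bubble components beyond those that Claim \ref{claim} handles -- along with the combinatorial bookkeeping in the smoothing step that prevents any component of $F'$ from degenerating to genus zero.
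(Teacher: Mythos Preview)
Your proposal is correct and follows essentially the same strategy as the paper: make $F$ and the $-1$-spheres simultaneously pseudo-holomorphic via Claim~\ref{claim}, blow down to obtain an immersed symplectic surface with positive nodal points, then resolve to get $F'$.

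The only real difference is in the order of operations. The paper blows down the $E_i$ \emph{successively}, invoking Claim~\ref{claim} once per step; this requires noting that Claim~\ref{claim} extends to positively immersed symplectic surfaces (since after the first blow-down $F$ is already only immersed), which the paper remarks is immediate because such a surface can still be made pseudo-holomorphic. Your version instead extends Claim~\ref{claim} to handle all the $E_i$ at once and blows down simultaneously. Either trade-off works; yours is slightly cleaner in that it avoids the immersed-surface variant, but you should be explicit that the $J$-holomorphic representatives of the $E_i$ you obtain need not be the originally given $S_i$---they are new spheres in the same classes, automatically pairwise disjoint by positivity of intersection since $E_i\cdot E_j=0$, and blowing them down yields a symplectomorphic $(N,\sigma)$. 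Two further points worth tightening: the paper justifies that the blown-down surface has only positive nodal singularities by viewing blow-down as fiber sum with $(\mathbb{CP}^2,\text{line})$ and invoking Gompf's pairwise sum, which is cleaner than asserting ``transverse normal crossings at $p_i$'' directly; and for the final smoothing the paper cites Corollary~3.4 of \cite{LU} rather than an unspecified ``standard resolution.'' Your explicit genus bookkeeping for $F'$ is a nice addition that the paper leaves implicit.
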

\begin{proof}  To apply Lemma \ref{max} we blow down the $-1$ classes successively. We choose an
$\omega-$tamed almost complex structure  $J$ as in the proof of
Claim  \ref{claim} such that $E_1$ is represented by an embedded
$J-$holomorphic sphere $S_1$,  and  $F$ is $J-$holomorphic. By a
small isotopy of $F$, we can further assume that $F$ is symplectic
and intersects $S_1$ transversally and non-negatively. We can then
perform blow down such that $F$ becomes an immersed symplectic
surface  with only positive nodal points and still without sphere
components. Here it is convenient  to view blowing down as fiber summing with the pair $\mathbb CP^2$ along a line,
and from this point of view, the immersed symplectic surface is obtained from 
 Gompf's pairwise fiber sum construction (\cite{Gompf}). 
Observe that Claim \ref{claim} actually generalizes to a
positively immersed symplectic surface as it still can be made
pseudo-holomorphic. Then we repeat this process to finally obtain an
immersed symplectic surface $F_{red}$ in $(N, \sigma)$ with only
positive nodal points and still without sphere components. By
Corollary $3.4$ in \cite{LU}, we can perturb it to an embedded
symplectic surface $F'$. Notice that, if $c_i=F\cdot E_i$, then,
$$[F]=\iota_* [F_{red}]-\sum_1^k c_iE_i=\iota_*[F']-\sum_1^k c_iE_i.$$
\end{proof}

\subsection{The adjoint class $K_{\omega}+[F]$}
The following definition was introduced in  \cite{LY} for a
connected surface.

\begin{definition}\label{adjoint} Let $F$ be an embedded symplectic surface
in $(M, \omega)$ with each component positive genus.
 The adjoint class of $F$ is defined as $K_{\omega}+[F]$.
\begin{itemize}
 \item $F$ is
called maximal if  for any symplectic $-1$ class $E$,
$$(K_{\omega}+[F])\cdot E\geq 0. $$

\item  $F$ is called special if
$(K_{\omega}+[F])^2=0.$

\item $F$ is called distinguished if $K_{\omega}+[F]$ is rationally
trivial.
\end{itemize}

\end{definition}
 As $K_{\omega}\cdot E=-1$ for any $E\in
\mathcal E_M$, by Lemma \ref{max}, the two notions of maximality in
Definitions \ref {max'} and \ref{adjoint} coincide when $F$ is an
embedded symplectic surface without sphere components.

In this subsection we assume that $F$ is a maximal  symplectic
surface in $(M, \omega)$ with each component positive genus.

\subsubsection{$\kappa^s((K_{\omega}+[F])^2)$}
We now discuss the sign of $(K_{\omega}+[F])^2$ for a maximal
symplectic surface $F$, in other words, we calculate
$\kappa^s((K_{\omega}+[F])^2)$.


 \begin{prop}\label{crucial}
 Suppose $\kappa^s(M, \omega)\geq 0$, $F=\sqcup F_i$ is a maximal symplectic surface and each $F_i$ of positive genus.
Then we have
 \begin{equation}\label{KF<0}(K_{\omega}+[F])^2\geq 0.
 \end{equation}
 \end{prop}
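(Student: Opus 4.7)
The plan is to reduce the claim to the smooth minimal model $(N,\sigma)$ of $(M,\omega)$ via Lemma~\ref{blowdown}, and then to rewrite $(K_\omega+[F])^2$ as a sum of four manifestly nonnegative terms.

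First, blow down (inductively, if necessary) a disjoint family of symplectic $-1$ spheres in classes $E_1,\dots,E_k$ so that the resulting $(N,\sigma)$ is smoothly minimal; if $(M,\omega)$ is already minimal, take $k=0$. Lemma~\ref{blowdown} produces an embedded symplectic surface $F'\subset N$ with each component of positive genus, satisfying $[F]=\iota_*[F']-\sum_i c_iE_i$ for $c_i=[F]\cdot E_i$. The maximality hypothesis on $F$ gives $(K_\omega+[F])\cdot E_i=c_i-1\ge 0$, hence $c_i\ge 1$ for every $i$.

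Using $K_\omega=\pi^*K_\sigma+\sum_i E_i$ together with the adjunction formulas for $F$ and $F'$, a direct computation first yields the genus transformation $g(F')=g(F)+\sum_i\binom{c_i}{2}$ and then, after substitution into $(K_\omega+[F])^2=(K_\sigma+[F'])^2-\sum_i(c_i-1)^2$, the identity
\begin{equation*}
(K_\omega+[F])^2 \;=\; K_\sigma^2 \;+\; K_\sigma\cdot[F'] \;+\; \bigl(2g(F)-2\bigr) \;+\; \Bigl(\sum_i c_i - k\Bigr).
\end{equation*}

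Three of the four summands are immediately nonnegative: $K_\sigma^2\ge 0$ because $\kappa^s(N,\sigma)=\kappa^s(M,\omega)\ge 0$ and $N$ is minimal (Definition~\ref{sym Kod'}); $2g(F)-2\ge 0$ because each $F_i$ has positive genus, so $g(F)\ge 1$ by Lemma~\ref{disc}(i); and $\sum_i c_i-k\ge 0$ by maximality. The only nontrivial summand is $K_\sigma\cdot[F']$. If $[F']^2\ge 0$, then since $(N,\sigma)$ is minimal with $\kappa^s\ge 0$ and $\mathcal E_{N,\sigma}=\emptyset$, Lemma~\ref{special} (with $p=0$) shows that $K_\sigma$, or $2K_\sigma$ when $b^+(N)=1$, is GW stable, and Lemma~\ref{stable} then gives $K_\sigma\cdot[F']\ge 0$. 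If $[F']^2<0$, adjunction in $N$ gives $K_\sigma\cdot[F']=2g(F')-2-[F']^2>2g(F')-2\ge 0$, using $g(F')\ge 1$ since every component of $F'$ has positive genus. The main obstacle is the derivation of the four-term identity: it requires careful bookkeeping of intersection pairings and of the formal genus under iterated blow-downs with multiplicities $c_i\ge 1$. Once this is in hand, nonnegativity of $(K_\omega+[F])^2$ reduces to the GW stability lemmas and the adjunction formula.
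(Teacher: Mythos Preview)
Your proof is correct and follows essentially the same approach as the paper: both reduce to the minimal model $(N,\sigma)$ via Lemma~\ref{blowdown}, and your four-term identity $K_\sigma^2 + K_\sigma\cdot[F'] + (2g(F)-2) + (\sum c_i - k)$ is exactly what the paper's decomposition $K_\omega^2 + K_\omega\cdot[F] + (K_\omega\cdot[F]+[F]^2)$ becomes after substituting $K_\omega^2 + K_\omega\cdot[F] = K_\sigma^2 + K_\sigma\cdot[F'] + (\sum c_i - k)$. The only cosmetic difference is that the paper verifies $K_\sigma\cdot[F']\ge 0$ componentwise, whereas you treat $[F']$ as a whole via the dichotomy $[F']^2\ge 0$ versus $[F']^2<0$.
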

\begin{proof}
Notice that when $F$ is connected the statement   is contained  in
\cite{LY}. We point out however when $[F]^2<0$ some further
arguments, e.g. those in the appendix in \cite{DL}, are needed to
complete the proof there. We here offer an alternative argument for
this more general (possibly disconnected) situation.

 Let us rewrite
\begin{equation}\label{3}
(K_{\omega}+[F])^2=K_{\omega}^2+K_{\omega}\cdot [F]+(K_{\omega}\cdot
[F]+[F]^2)
\end{equation}
as a sum of three terms.

 First
let us suppose that $F$ is connected. By \eqref{adjunction}
 the last term is non-negative as $g(F)\geq
1$. Let us argue that
\begin{equation}\label{KF} K_{\omega}\cdot [F]\ge
0.\end{equation} When $[F]^2\ge 0$,  it is due to  lemmas
\ref{stable} and \ref{special}; when $[F]^2<0$, because $g(F)\ge 1$,
it is  due to the adjunction formula \eqref{adjunction}.

If we further assume that $(M, \omega)$ is minimal, then
$K_{\omega}^2\geq 0$ as well.  Thus we can conclude that
 (\ref{KF<0}) holds when  $F$ is connected and $(M, \omega)$ is minimal.

For a disconnected symplectic surface $F=\sqcup F_i$, as each
connected component $F_i$ has positive genus, we still have by the
adjunction formula,
\begin{equation}
(K_{\omega}\cdot [F]+[F]^2)=\sum (K_{\omega}\cdot [F_i]+[F_i]^2)\geq
0.
\end{equation}
Moreover, $K_{\omega}\cdot [F_i]\geq 0$, so
\begin{equation}\label{KF'<0}
K_{\omega}\cdot [F]=\sum K_{\omega}\cdot [F_i]\geq 0.
\end{equation}
Thus if $(M, \omega)$ is minimal, all  three terms in \eqref{3} are
still non-negative.

In summary we have shown that (\ref{KF<0}) holds when $(M, \omega)$
is minimal.

Now  we assume that  $(M, \omega)$ is non-minimal and $\mathcal
E_{\omega}=\{E_i\}$. Then $K_{\omega}^2$ could be negative. However,
as the 3rd term in \eqref{3} is always non-negative,  it suffices to
prove that the sum of the 1st and the 2nd terms
\begin{equation} \label{sum} K_{\omega}^2+K_{\omega}\cdot [F]
\end{equation} in \eqref{3} is non-negative.

 Let $(N, \sigma)$ be the minimal model of $(M,
\omega)$ and $K_{\sigma}$ be its symplectic canonical class. Then
 $$K_{\omega}=\pi^*K_{\sigma}+\sum_1^k E_i,$$
where $\pi^*:H^2(N)\to H^2(M)$ is the natural inclusion.
 By Lemma
\ref{blowdown}, there is an embedded symplectic surface $F'\subset
(N, \sigma)$ such that \begin{equation} [F]=\iota_*[F']-\sum c_i
E_i,\quad c_i=F\cdot E_i.\end{equation} As argued above, we have
\begin{equation} \label{sum'} K_{\sigma}^2+K_{\sigma}\cdot [F']\geq
0.\end{equation}
 The contribution of $E_i$ to $
K_{\omega}^2$ is $-k$, to $K\cdot[F]$ is $\sum_1^kc_i$. Because $F$
is maximal,  $c_i\ge 1$. Thus the difference of \eqref{sum} and
\eqref{sum'} is non-negative.

\end{proof}

From the arguments above, it is easy to determine when
$(K_{\omega}+[F])^2=0$.

 \begin{prop}\label{crucial=0}
 Suppose $\kappa^s(M, \omega)\geq 0$, $F=\sqcup F_i$ is a maximal symplectic surface and each $F_i$ of positive genus.
If $(K_{\omega}+[F])^2=0$, then $\kappa^s(M, \omega)=0$ or $1$, and
each $F_i$ is a torus.

Moreover, suppose $\kappa^s(M, \omega)=0$ or $1$, $F=\sqcup F_i$ is
a maximal symplectic surface and each $F_i$ is a torus. If $(M,
\omega)$ is minimal, then $(K_{\omega}+[F])^2=0$ if and only if
$[F_i]^2=0$; and in general, suppose $(N, \sigma)$ is the minimal
model, then $(K_{\omega}+[F])^2=0$ if and only if  there is a
partition $ \{E_{i_j}\}$ of $\mathcal E_{M, \omega}$ such that
$[F_i]=\iota_*[F_i']-\sum_j E_{i_j}$, where $F_i'$ are disjoint
square 0 symplectic tori in $(N, \sigma)$.

 \end{prop}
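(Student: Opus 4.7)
The plan is to re-examine the three-term decomposition that underlies the proof of Proposition \ref{crucial}:
\[
(K_{\omega}+[F])^2 = \bigl(K_{\sigma}^2+K_{\sigma}\cdot[F']\bigr) + \Bigl(\sum_{j=1}^k c_j - k\Bigr) + \sum_i \bigl(2g(F_i)-2\bigr),
\]
where $(N,\sigma)$ is the minimal model of $(M,\omega)$, $\mathcal{E}_{M,\omega}=\{E_j\}_{j=1}^k$, $c_j=F\cdot E_j$, $F=\sqcup_i F_i$, and $F'\subset N$ arises from Lemma \ref{blowdown}. Each of the three grouped pieces was already shown in Proposition \ref{crucial} to be nonnegative: the first by the minimal case applied inside $(N,\sigma)$, the second because maximality gives $c_j\geq 1$, and the third because each $g(F_i)\geq 1$. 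Hence $(K_{\omega}+[F])^2=0$ is equivalent to the simultaneous vanishing of all three pieces, and the entire proposition reduces to unpacking this equivalence.

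For the first assertion, vanishing of the third piece gives $g(F_i)=1$ for every $i$ (the torus claim), while vanishing of the first piece forces $K_{\sigma}^2=0$. Combined with $\kappa^s(M,\omega)=\kappa^s(N,\sigma)\geq 0$, Lemma \ref{ss'} then yields $\kappa^s(M,\omega)\in\{0,1\}$.

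For the minimal converse, suppose $(M,\omega)$ is itself minimal with $\kappa^s\in\{0,1\}$ and each $F_i$ a torus. Then $K_{\omega}^2=0$, and $K_{\omega}\cdot[F_i]\geq 0$ by Lemmas \ref{special} and \ref{stable}; adjunction for the torus $F_i$ gives $[F_i]^2=-K_{\omega}\cdot[F_i]$. Hence $[F_i]^2=0$ for every $i$ is equivalent to $K_{\omega}\cdot[F_i]=0$ for every $i$, which is precisely the vanishing condition needed for the three-term decomposition.

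For the general converse, pass to the minimal model and apply Lemma \ref{blowdown} componentwise to write $[F_i]=\iota_*[F_i']-\sum_j c_{ij}E_j$ with $c_{ij}=F_i\cdot E_j\geq 0$ (nonnegativity by Claim \ref{claim} together with positivity of intersection, applied one $E_j$ at a time). Vanishing of the middle piece forces $\sum_i c_{ij}=c_j=1$ for every $j$; combined with $c_{ij}\geq 0$ this means each $E_j$ meets a unique component $F_i$ transversally in a single point, producing the partition $\mathcal{E}_{M,\omega}=\sqcup_i\{E_{i_j}\}_j$ and the identity $[F_i]=\iota_*[F_i']-\sum_j E_{i_j}$; disjointness of the $F_i'$ follows because a single transverse intersection contracts to a smooth point under blow down, so no $E_j$ joins two components of $F$. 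Finally, $K_{\sigma}\cdot[F_i']=0$ is extracted componentwise from the vanishing of the first piece (each term in $\sum_i K_{\sigma}\cdot[F_i']$ is nonnegative), and adjunction in $(N,\sigma)$ then yields $g(F_i')=1$ and $[F_i']^2=0$; the reverse implication is routine substitution into the decomposition. The main obstacle is this partition step: upgrading the single scalar identity $\sum c_j=k$ into the geometric fact that each $E_j$ attaches to exactly one $F_i$ requires both maximality (for $c_j\geq 1$) and simultaneous pseudo-holomorphic positivity (for $c_{ij}\geq 0$), and one must then check carefully that the torus and disjointness properties propagate cleanly from $F$ in $M$ to $F'$ in $N$.
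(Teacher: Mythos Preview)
Your proposal is correct and follows the same approach the paper intends: the paper's own proof of Proposition \ref{crucial=0} is the single sentence ``From the arguments above, it is easy to determine when $(K_{\omega}+[F])^2=0$,'' and your argument is precisely the unpacking of that sentence, tracking equality through the three nonnegative pieces of the decomposition in the proof of Proposition \ref{crucial}. One minor point: the conclusion $\kappa^s(M,\omega)\in\{0,1\}$ from $K_{\sigma}^2=0$ comes more directly from Definition \ref{sym Kod'} (or equation \eqref{add}) than from Lemma \ref{ss'}.
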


When $\kappa^s(M, \omega)=-\infty$ we also have \eqref{KF<0} except
in one case.

 \begin{prop}\label{surface}
 Suppose $\kappa^s(M, \omega)=-\infty$, $F=\sqcup F_i$ is a maximal symplectic surface and each $F_i$ of positive genus.
  If  $F$ is  not a
section of a genus $g\geq 1$ $S^2$ bundle, then we have
\eqref{KF<0}.
 \end{prop}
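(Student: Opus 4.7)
The plan is to mimic the decomposition used in the proof of Proposition \ref{crucial}. Write
\[
(K_\omega + [F])^2 \;=\; K_\omega^2 + K_\omega\cdot[F] + \bigl(K_\omega\cdot[F] + [F]^2\bigr),
\]
where the parenthesized summand equals $2g(F) - 2$ by \eqref{adjunction}. Since every component $F_i$ has positive genus, Lemma \ref{disc}(i) gives $g(F) \geq 1$, so this term is non-negative. The task thus reduces to bounding $K_\omega^2 + K_\omega\cdot[F]$ from below.

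The next step is to reduce to the minimal model. Let $(N, \sigma)$ be a minimal model of $(M, \omega)$, obtained by blowing down disjoint symplectic $(-1)$-spheres in classes $E_1, \ldots, E_k$. Lemma \ref{blowdown} produces an embedded symplectic surface $F' \subset N$ with positive-genus components such that $[F] = \iota_*[F'] - \sum c_i E_i$ with $c_i = [F]\cdot E_i$; maximality combined with Lemma \ref{max} forces $c_i \geq 1$. Using $K_\omega = \pi^*K_\sigma + \sum E_i$ and the orthogonality of the $E_i$ to pullbacks from $N$, one computes
\[
(K_\omega + [F])^2 \;=\; (K_\sigma + [F'])^2 \;-\; \sum_{i=1}^{k}(c_i-1)^2,
\]
so the problem localizes to $(N, \sigma)$ modulo a non-positive deficit.

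In the minimal case, $\kappa^s(N,\sigma) = -\infty$ puts $(N, \sigma)$ in one of three classes. For $N = \mathbb{CP}^2$, two positive-genus components could not be disjoint (they would intersect in $\geq 9$ points), so $F'$ is connected of degree $d \geq 3$, yielding $(K_\sigma + [F'])^2 = (d-3)^2 \geq 0$. For $N = S^2\times S^2$ a parallel disjointness argument forces $F'$ connected of bidegree $(a,b)$ with $a, b \geq 2$, giving $(K_\sigma + [F'])^2 = 2(a-2)(b-2) \geq 0$. For $N$ an $S^2$-bundle over $\Sigma_h$ with $h\geq 1$, let $B = [F']\cdot[\text{fiber}]$; then $B = 0$ is impossible (fibers are spheres), $B = 1$ forces $F'$ to be a section of this $S^2$-bundle (precisely the excluded case), and for $B \geq 2$ the restriction of the fiber projection has degree $B$, so Lemma \ref{disc}(ii) provides $g(F') \geq B(h-1) + 1$. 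Plugging this into a direct expansion of $(K_\sigma + [F'])^2$ in the basis $\{s, f\}$ forces non-negativity.

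The principal obstacle is the deficit $\sum(c_i-1)^2$ coming from the reduction step, since it can exceed the minimal-case bound $(K_\sigma + [F'])^2 \geq 0$ whenever some $c_i \geq 2$. The cleanest way through is to carry out the blow-down one exceptional class at a time and observe that a step with $c_i \geq 2$ corresponds to resolving $\binom{c_i}{2}$ self-intersection points of the intermediate image of $F$, which via \eqref{adjunction} raises the genus of the blow-down model and feeds back into the adjoint-square inequality to absorb $(c_i-1)^2$ outside the section case. Verifying this cancellation carefully, while keeping track of whether any intermediate blow-down reproduces a section of some $S^2$-bundle over a positive-genus base, is the delicate technical point of the argument.
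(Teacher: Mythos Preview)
Your reduction to the minimal model is not how the paper proceeds, and the step you flag as ``the delicate technical point'' is indeed a genuine gap that your sketch does not close.

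The identity $(K_\omega+[F])^2=(K_\sigma+[F'])^2-\sum(c_i-1)^2$ is correct, and your minimal-case analysis is essentially sound. The trouble is the deficit. You propose to absorb $\sum(c_i-1)^2$ by noting that $g(F')=g(F)+\sum\binom{c_i}{2}$, but this genus increase does not ``feed back'' into $(K_\sigma+[F'])^2$: both the genus and the adjoint square are determined by the same class $[F']$, so there is no independent gain. What one would actually need is a strengthening of the minimal-case bound to $(K_\sigma+[F'])^2\geq \sum(c_i-1)^2$, and establishing that requires exploiting the \emph{full} maximality hypothesis --- namely $[F]\cdot E\geq 1$ for every $E\in\mathcal E_\omega$, not just for the chosen blow-down classes $E_1,\dots,E_k$. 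In $\mathbb{CP}^2\#k\overline{\mathbb{CP}^2}$ there are many more $(-1)$-classes (e.g.\ $H-E_i-E_j$), and in irrational ruled blow-ups the classes $T-E_i$ force $a>c_i$; these extra constraints are what make the inequality go through, and your inductive blow-down scheme does not invoke them. (Incidentally, the constraint $[F]\cdot(T-E_i)\geq 1$ is exactly what rules out the possibility that $F'$ becomes a section in an irrational ruled minimal model, resolving one of your worries.)

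The paper avoids this difficulty entirely: its first argument is a direct case-by-case verification of the equivalent inequality $K_\omega^2-[F]^2\geq 4(1-g(F))$ on each diffeomorphism type (minimal and non-minimal alike), carried out in \cite{LY} for connected $F$ and extended here via Lemma~\ref{disc}. It then offers a second, cleaner proof via Seiberg--Witten theory (Lemma~\ref{gt}), showing directly that $K_\omega+[F]$ is represented by an embedded symplectic surface whose components all have non-negative square. Either route bypasses the minimal-model reduction; if you want to salvage your approach, you will end up doing the same case analysis the paper does, just organized differently.
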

\begin{proof}
This is also proved in \cite{LY} under the assumption that $F$ is
connected. The argument is a case by case analysis. Our argument
here is also a case by case analysis. As in \cite{LY}, we observe
that \eqref{KF<0} is equivalent to
\begin{equation}\label{misprint}
K_{\omega}^2-[F]^2\geq 4(1-g(F)).
\end{equation}
Here $g(F)$ is the  genus defined in Definition \ref{formal}. We
need to point out however that there is a misprint in (8) in
\cite{LY}: the lefthand side should be $K_{\omega}^2-[F]^2$.

With Lemma \ref{disc} understood we can check that the argument for
\eqref{misprint} in \cite{LY} for a connected $F$ remains valid in
each case for a disconnected $F$.
\end{proof}

We offer  another argument using SW thoery as in \cite{LiL}.

\begin{lemma}\label{gt}
Suppose $\kappa^s(M, \omega)=-\infty$, $e$ is a class with positive
$\omega-$genus $g_{\omega}(e)$ (see Definition \ref{formal}),
$e\cdot [\omega]>0$, and $e\cdot E> 0$ for any $E\in \mathcal E_{M,
\omega}$. If
 $e$ is not the class of a section of a genus $g\geq 1$ $S^2$ bundle, then
 $K_{\omega}+e$ is represented
by $\sqcup G_i$ with each $G_i$ a symplectic surface satisfying
\begin{equation} \label{u} [G_i]^2\geq 0, \quad -K_{\omega}\cdot
[G_i]+[G_i]^2\geq 0. \end{equation} In particular,
$(K_{\omega}+e)^2=\sum [G_i]^2\geq 0$.
\end{lemma}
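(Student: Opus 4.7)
The plan is to leverage Taubes's SW=Gr theory together with the Li-Liu analysis of rational and ruled 4-manifolds (this is the ``SW theory as in \cite{LiL}'' hinted at). The overall strategy has three stages: first produce a $J$-holomorphic representative of $K_{\omega}+e$ as a disjoint union of embedded symplectic curves $\sqcup G_i$; then show $[G_i]^2\geq 0$ using the hypothesis $e\cdot E>0$ on exceptional classes; finally pair against a $J$-holomorphic representative of $-K_{\omega}$ to get the second inequality.

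For the first stage, I would argue that $K_{\omega}+e$ is Gromov-effective. Since $\kappa^s(M,\omega)=-\infty$, the classification places $(M,\omega)$ in the rational/ruled family, and by Lemma \ref{special} the class $-K_{\omega}$ is GW stable. In this setting, Li-Liu's computation of Seiberg-Witten basic classes (with the appropriate $b^+=1$ wall-crossing handled as in \cite{LL1}) identifies precisely which classes of positive $\omega$-genus are Gromov-effective. The three hypotheses on $e$ (positive $\omega$-genus, $e\cdot[\omega]>0$, strict positivity $e\cdot E>0$ on all $E\in\mathcal{E}_{M,\omega}$) are tailored to place $e$ in this effective locus, while the sole exclusion (section of a $g\geq 1$ ruled surface) is exactly the configuration where $(K_{\omega}+e)\cdot [\omega]<0$ and effectiveness fails. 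Taubes's structure theorem then upgrades non-vanishing Gromov invariant to an embedded representative: for generic $\omega$-tame $J$, the class $K_{\omega}+e$ is realized by a disjoint union $\sqcup G_i$ of embedded $J$-holomorphic (hence symplectic) curves.

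For the self-intersection inequality, suppose some $G_i$ has $[G_i]^2<0$. Since $G_i$ is embedded and $J$-holomorphic, the adjunction formula forces $[G_i]^2=-1$ and $G_i\cong S^2$, so $[G_i]=E\in\mathcal{E}_{M,\omega}$. Pairing the representative $\sqcup G_i$ with $E$ and using disjointness of components gives $(K_{\omega}+e)\cdot E=[G_i]^2=-1$, but direct computation yields $(K_{\omega}+e)\cdot E=-1+e\cdot E\geq 0$ by hypothesis, a contradiction. So $[G_i]^2\geq 0$.

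For the second inequality, I would choose $J$ generic enough that both $K_{\omega}+e$ and $-K_{\omega}$ admit $J$-holomorphic representatives $\sqcup G_i$ and $D$ respectively, with no shared irreducible components (a generic condition since both classes are Gromov-effective). Positivity of intersections between distinct irreducible pseudoholomorphic curves gives $-K_{\omega}\cdot [G_i]=D\cdot [G_i]\geq 0$, and combined with $[G_i]^2\geq 0$ this yields $-K_{\omega}\cdot[G_i]+[G_i]^2\geq 0$. The final assertion $(K_{\omega}+e)^2=\sum[G_i]^2\geq 0$ then follows because the $G_i$ are pairwise disjoint. The main obstacle is the first stage: verifying Gromov-effectiveness of $K_{\omega}+e$ precisely under these hypotheses requires the careful Li-Liu wall-crossing bookkeeping in the $b^+=1$ situation and a genuine case analysis distinguishing rational surfaces, minimal irrational ruled surfaces, and their blowups; the subsequent positivity arguments are then largely formal.
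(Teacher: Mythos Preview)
Your overall framework matches the paper's, but there are two genuine gaps.

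\textbf{Stage 3 does not work as written.} You want a $J$-holomorphic representative $D$ of $-K_{\omega}$, but Lemma~\ref{special} only guarantees $-K_{\omega}$ is GW stable when $(M,\omega)$ is \emph{minimal} and $K_{\omega}^2\geq 0$. For an $S^2$-bundle over $\Sigma_h$ with $h\geq 2$ one has $K_{\omega}^2=8(1-h)<0$, and for suitable $\omega$ even $(-K_{\omega})\cdot[\omega]<0$, so no $J$-holomorphic representative of $-K_{\omega}$ exists and your positivity-of-intersection argument collapses. In fact this stage is unnecessary: the inequality $-K_{\omega}\cdot[G_i]+[G_i]^2\geq 0$ is exactly the non-negativity of the expected dimension of each component, and it comes for free from Taubes's structure theorem for generic $J$. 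The paper simply quotes Taubes for the whole package \eqref{u} at once (given $SW_{\omega}(K_{\omega}+e)\neq 0$ and $(K_{\omega}+e)\cdot E\geq 0$).

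\textbf{Stage 1 mischaracterizes the excluded case.} You assert that the section-of-an-irrational-ruled-surface exclusion is ``exactly the configuration where $(K_{\omega}+e)\cdot[\omega]<0$.'' This is false: for $e=U+bT$ on $S^2\times\Sigma_h$ one computes $(K_{\omega}+e)\cdot[\omega]=-y+(2h-2+b)x$, which can have either sign. The paper's actual mechanism is the explicit $b^+=1$ wall-crossing formula: $SW_{\omega}(-e)=0$ since $(-e)\cdot[\omega]<0$, and the wall-crossing difference $|SW_{\omega}(K_{\omega}+e)-SW_{\omega}(-e)|$ equals $1$ in the rational case and $|1-e\cdot T|$ in the irrationally ruled case. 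The only way this vanishes is $e\cdot T=1$, and the hypothesis $e\cdot E>0$ on all exceptional classes forces $e\cdot T\geq 2$ whenever $(M,\omega)$ is non-minimal (since then $T=E_1+E_2$ for some $E_1,E_2\in\mathcal E_{M,\omega}$), leaving only the minimal section case. This concrete computation is the heart of the proof, and your Stage~1 does not supply it. (Your Stage~2 reasoning ``adjunction forces $[G_i]^2=-1$'' also needs the generic-$J$ index bound, not adjunction alone, but the conclusion there is correct.)
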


\begin{proof} Recall that for a symplectic $4-$manifold $(M, \omega)$, there is a
canonical bijection between Spin$^c$ structures and $H_2(M;\mathbb
Z)$.  Recall also when $b^+(M)=1$ (which is our case here), for each
Spin$^c$ structure (equivalently, a class in $H_2(M;\mathbb Z)$),
there are two SW invariants, one of which is $SW_{\omega}$. By the
celebrated result of \cite{T}, if $SW_{\omega}(\alpha)\ne 0$, then
$\alpha\cdot [\omega]>0$. Moreover, if $\alpha\cdot E\geq 0$ for any
$E\in \mathcal E_{M, \omega}$, $\alpha$ is represented by a possibly
disconnected symplectic submanifold $\sqcup G_i$ satisfying
\eqref{u}.

We will  show that  $SW_{\omega}(K_{\omega}+e)\ne 0$ under the
assumption that $g_{\omega}(e)>0$ and $e\cdot [\omega]>0$. Since we
also assume $e\cdot E \geq 1$ for any $E\in \mathcal E_{M,
\omega}$, the conclusion of Lemma \ref{gt} will then follow.

 We first calculate  the Seiberg-Witten dimension of
the Spin$^c$ structure $K_{\omega}+e$,
$$\dim_{SW}(K_{\omega}+e)=-K_{\omega}\cdot
(K_{\omega}+e)+(K_{\omega}+e)^2=e(K_{\omega}+e)=2g_{\omega}(e)-2.$$
Since $g_{\omega}(e)$ is assumed to be positive,
$\dim_{SW}(K_{\omega}+e)\geq 0$. Notice that
$K_{\omega}-(K_{\omega}+e)=-e$. Thus we have
$$|SW_{\omega}(K_{\omega}+e)-SW_{\omega}(-e)|=\begin{cases}   1 & \hbox{if $(M, \omega)$ rational,}\cr
  |1-(e\cdot T)| &\hbox{if
  $(M, \omega)$ irrationally ruled,}\cr\end{cases}$$
  where $T$ is the unique positive fiber class of irrationally ruled manifolds (see \cite{McS}).
Since $(-e)\cdot [\omega]<0$ by assumption, we have
$SW_{\omega}(-e)=0$.

Hence we can conclude that unless $(M, \omega)$ is irrationally
ruled  and $e\cdot T=1$, we have $SW_{\omega}(K_{\omega}+e)\ne 0$.
It remains to show that $e\cdot T=1$ only if $(M, \omega)$ is an
$S^2$ bundle over a positive genus surface, i.e. $(M, \omega)$ is
minimal. This follows immediately from the fact that if $(M,
\omega)$ is not minimal, there are two classes $E_1, E_2\in \mathcal
E_{M, \omega}$ with $E_1+E_2=T$.
\end{proof}


 \begin{cor}\label{surface}
 Suppose $\kappa^s(M, \omega)=-\infty$, $F=\sqcup F_i$ is a maximal symplectic surface and each $F_i$ of positive genus.
Then  $(K_{\omega}+[F])^2=0$ if and only if $K_{\omega}+[F]$ is
represented by a disjoint union of symplectic spheres in the same
class with square $0$, or a disjoint union of symplectic tori
 whose classes have square $0$ and proportional to each other.
 \end{cor}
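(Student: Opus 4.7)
The plan is to apply Lemma \ref{gt} with $e = [F]$ to produce an explicit positive representative of $K_\omega + [F]$, and then combine the vanishing of the square with the Lorentzian nature of the intersection form on $H^2(M;\mathbb{R})$ to pin down the geometry of that representative.

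The first step is to verify the hypotheses of Lemma \ref{gt} for $e = [F]$. Since each $F_i$ has positive genus, Lemma \ref{disc} gives $g_\omega([F]) = g(F) \geq 1$; since $F$ is a nonempty symplectic surface, $[F] \cdot [\omega] > 0$; and since $F = F^+$ is maximal, Lemma \ref{max} gives $[F] \cdot E > 0$ for every $E \in \mathcal{E}_{M,\omega}$. Provided $F$ is not a section of a genus $g \geq 1$ $S^2$ bundle, Lemma \ref{gt} then writes
\begin{equation*}
K_\omega + [F] = \sum_i [G_i],
\end{equation*}
where the $G_i$ are disjoint symplectic surfaces satisfying $[G_i]^2 \geq 0$ and $-K_\omega \cdot [G_i] + [G_i]^2 \geq 0$.

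For the nontrivial ``only if'' direction, disjointness of the $G_i$ yields $[G_i] \cdot [G_j] = 0$ for $i \neq j$, so $(K_\omega + [F])^2 = \sum_i [G_i]^2$. Vanishing of this sum of non-negative terms forces $[G_i]^2 = 0$ for every $i$, and then $-K_\omega \cdot [G_i] \geq 0$ together with the adjunction formula $2g(G_i) - 2 = K_\omega \cdot [G_i]$ forces $g(G_i) \in \{0, 1\}$, so each $G_i$ is a sphere or a torus. Since each $[G_i]$ has $[G_i]^2 = 0$, $[G_i] \cdot [\omega] > 0$, and the $[G_i]$ pairwise intersect trivially, the light cone lemma in the signature $(1,b^-)$ intersection form on $H^2(M;\mathbb{R})$ (available because $b^+(M) = 1$ for any symplectic $M$ with $\kappa^s = -\infty$) forces the $[G_i]$ to be rational multiples of a single primitive null class. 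Spheres satisfy $K_\omega \cdot [G_i] = -2$ while tori satisfy $K_\omega \cdot [G_j] = 0$, so a sphere class and a torus class cannot be proportional; hence either all $G_i$ are spheres or all are tori. In the sphere case, $K_\omega \cdot [G_i] = -2$ pins each proportionality constant to $1$, so all spheres lie in the same class; in the torus case the proportionality is itself the desired conclusion. The ``if'' direction is immediate: a class of the form $n[S]$ with $[S]^2 = 0$ has square $0$, and so does $\sum_i c_i [T_0]$ with $[T_0]^2 = 0$.

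The main obstacle I anticipate is the section exception built into Lemma \ref{gt}, which is not excluded by the hypotheses of the corollary. I would handle this by a direct computation in the intersection form of a ruled surface, yielding $(K_\omega + [F])^2 = 4 - 4g - [F]^2$ in that case; when this vanishes, $K_\omega + [F]$ pairs negatively with $[\omega]$, so no positive symplectic configuration can represent it and the ``only if'' statement is consistent with the classification obtained in the generic case.
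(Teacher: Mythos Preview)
Your core argument is exactly the paper's: apply Lemma \ref{gt} to $e=[F]$ to write $K_\omega+[F]=\sum[G_i]$ with $[G_i]^2\ge 0$ and $-K_\omega\cdot[G_i]+[G_i]^2\ge 0$; then $(K_\omega+[F])^2=0$ forces each $[G_i]^2=0$, adjunction gives $g(G_i)\in\{0,1\}$, and the light cone lemma (available since $b^+=1$) gives proportionality. You actually supply details the paper leaves tacit---ruling out a mixture of spheres and tori via $K_\omega\cdot[G_i]\in\{-2,0\}$, pinning the sphere classes to a single class, and noting the ``if'' direction---so on the main line your proof is the same but more complete.

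Your worry about the section exception in Lemma \ref{gt} is legitimate, and the paper's proof simply ignores it. However, your proposed resolution does not work. Take $M=S^2\times T^2$ and $F$ the section with $[F]=[T^2]$: then $F$ is maximal (the manifold is minimal), $g(F)=1$, and $K_\omega+[F]=-[T^2]$ has square $0$ but pairs negatively with $[\omega]$, so it admits \emph{no} representation by symplectic surfaces. This is not ``consistent'' with the only-if direction---it contradicts it. The upshot is that the corollary, as stated and as proved in the paper, tacitly requires the same exclusion as Proposition \ref{surface} (i.e.\ $F$ not a section of a genus $h\ge 1$ $S^2$-bundle); once that is assumed, your argument and the paper's are complete and identical in substance.
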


\begin{proof}
By Lemma \ref{gt} $K_{\omega}+[F]$ is represented by an embedded
symplectic surface $\sqcup G_i$ satisfying \eqref{u}.
Since $\sum [G_i]^2 =0$, we have $[G_i]^2=0$ for each $i$. Apply
\eqref{u} and the adjunction formula, we find that the genus of each
$G_i$ is either all equal to $0$ or $1$. Moreover, by the light cone
lemma, the classes $[G_i]$ must be proportional to each other.
\end{proof}

\subsubsection{$\kappa^s((K_{\omega}+[F])\cdot [\omega])$}
We now discuss the sign of $(K_{\omega}+[F])\cdot [\omega]$ for a
maximal symplectic surface $F$, in other words, we calculate
$\kappa^s((K_{\omega}+[F])\cdot [\omega])$.

\begin{prop}\label{omega}
If $\kappa^s(M, \omega)\geq 0$, then
\begin{equation} (K_{\omega}+[F])\cdot [\omega]\geq 0,
\end{equation}
with equality holds if and only if $(M, \omega)$ is minimal with
$\kappa^s=0$ and $F$ is empty.
\end{prop}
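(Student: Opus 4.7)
The plan is to decompose
$$(K_\omega+[F])\cdot[\omega] \;=\; K_\omega\cdot[\omega] \;+\; [F]\cdot[\omega]$$
and show each summand is non-negative under the hypothesis $\kappa^s(M,\omega)\geq 0$, then trace through when both summands vanish simultaneously.

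The second summand $[F]\cdot[\omega]$ is the total $\omega$-area of the symplectic surface $F$, which is clearly non-negative and vanishes if and only if $F=\emptyset$. For the first summand, I would invoke Lemma \ref{special} with $p=0$: the hypothesis $\kappa^s(M,\omega)\geq 0$ makes $K_\omega$ a GW stable class when $b^+>1$, and $2K_\omega$ a GW stable class when $b^+=1$. By Taubes' theorem (\cite{T}), a GW stable class is represented by a (possibly empty) $J$-holomorphic submanifold for any generic $\omega$-compatible almost complex structure $J$, and any such submanifold has non-negative $\omega$-area. Thus $K_\omega\cdot[\omega]\geq 0$, with equality if and only if the representative is empty, i.e.\ $K_\omega$ (equivalently $2K_\omega$) is torsion in $H^2(M;\mathbb{R})$.

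For the equality case of the proposition, non-negativity of both summands forces $[F]\cdot[\omega]=0$ and $K_\omega\cdot[\omega]=0$ separately; the former gives $F=\emptyset$, while the latter forces $K_\omega$ to be torsion. In the non-minimal case, writing $K_\omega=\pi^*K_\sigma+\sum_i E_i$ with $E_i\in\mathcal{E}_\omega$, one computes $K_\omega\cdot E_j = -1$ (since $\pi^*K_\sigma\cdot E_j = 0$ and $E_i\cdot E_j=-\delta_{ij}$), which is incompatible with $K_\omega$ being torsion. Hence $(M,\omega)$ must be minimal, and for a minimal symplectic $4$-manifold with $\kappa^s\geq 0$ the canonical class is torsion exactly when $\kappa^s=0$ by Definition \ref{sym Kod}. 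This yields the claimed equality condition. No step presents a serious obstacle: the argument rests on Taubes' SW $=$ Gr via Lemma \ref{special} together with the minimal-model decomposition of the canonical class already exploited in Proposition \ref{crucial}.
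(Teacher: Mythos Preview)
Your proof is correct and follows essentially the same decomposition as the paper: split $(K_\omega+[F])\cdot[\omega]$ into $K_\omega\cdot[\omega]$ and $[F]\cdot[\omega]$, observe the latter is the symplectic area of $F$, and analyze when each vanishes. The only difference is in how you justify $K_\omega\cdot[\omega]\geq 0$: the paper simply quotes the standard fact that $K_\omega\cdot[\omega]>0$ whenever $\kappa^s\geq 1$ and handles $\kappa^s=0$ by noting that equality forces minimality, whereas you rederive this via Lemma~\ref{special} and Taubes' theorem and then rule out non-minimality by the explicit computation $K_\omega\cdot E_j=-1$. Your route is slightly heavier in machinery but has the virtue of being self-contained; the paper's version is terser because the sign of $K_\omega\cdot[\omega]$ in terms of $\kappa^s$ is taken as already established (it follows directly from Definition~\ref{sym Kod'} on the minimal model together with $K_\omega\cdot[\omega]=K_\sigma\cdot[\sigma]+\sum_i \omega(E_i)$ under blow-up).
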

\begin{proof} When $\kappa^s(M, \omega)\geq 1$, we have
$K_{\omega}\cdot [\omega]>0$ and hence $(K_{\omega}+[F])\cdot
[\omega]>0$.

When $\kappa^s(M, \omega)=0$,  then $(K_{\omega}+[F])\cdot
[\omega]=0$ only when  $(M, \omega)$ is minimal and $[F]\cdot
[\omega]=0$. As $F$ is symplectic, this is possible only if $F$ is
empty.
\end{proof}

\begin{prop} \label{surface'} Suppose $\kappa^s(M, \omega)=-\infty$  and $F=\sqcup F_i\subset (M, \omega)$ is a possibly disconnected
 maximal symplectic surface with
 $g(F_i)\geq 1$. If $F$ is not a
section of a genus $g\geq 1$ $S^2$ bundle, then we have
\begin{equation}\label{adjoint4}
(K_{\omega}+[F])\cdot [\omega]\geq 0.
\end{equation}
Moreover, equality holds only if $[F]=-K_{\omega}$ and each $F_i$ is
a torus.
\end{prop}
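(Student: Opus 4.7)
The natural approach is to apply Lemma \ref{gt} directly with $e=[F]$ and read the inequality off the symplectic representative of $K_\omega+[F]$ that it produces. First I would verify the hypotheses of Lemma \ref{gt}: the $\omega$--genus $g_\omega([F])=g(F)$ is at least $1$ by Lemma \ref{disc}(i) since each $F_i$ has positive genus; $[F]\cdot[\omega]>0$ because $F$ is a non-empty symplectic surface; and $[F]\cdot E>0$ for every $E\in\mathcal E_{M,\omega}$, because $F$ has no sphere components (so $F=F^+$) and is maximal, whence Lemma \ref{max} upgrades the non-vanishing in Definition \ref{max'} to strict positivity. The hypothesis that $F$ is not a section of a genus $g\geq 1$ $S^2$ bundle is built into the proposition.

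Lemma \ref{gt} then represents $K_\omega+[F]$ by a (possibly empty) disjoint union $\sqcup G_i$ of embedded symplectic surfaces. Pairing with $[\omega]$ gives
\begin{equation*}
(K_\omega+[F])\cdot[\omega]=\sum_i [G_i]\cdot[\omega]\geq 0,
\end{equation*}
since every symplectic component $G_i$ has strictly positive $\omega$--area. This is exactly \eqref{adjoint4}. Moreover, equality forces every summand on the right to vanish, which in turn forces every $G_i$ to be empty; hence $K_\omega+[F]=0$ in $H^2(M;\mathbb{R})$, that is, $[F]=-K_\omega$.

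To finish the equality case I would plug $[F]=-K_\omega$ into the adjunction identity of Definition \ref{formal}:
\begin{equation*}
2g(F)-2=K_\omega\cdot[F]+[F]^2=-K_\omega^2+K_\omega^2=0,
\end{equation*}
so the formal genus of $F$ equals $1$. Combining this with the additive relation $2g(F)-2=\sum_i\bigl(2g(F_i)-2\bigr)$ and the hypothesis $g(F_i)\geq 1$, we obtain $\sum_i g(F_i)$ equal to the number of components, so each $g(F_i)=1$, i.e.\ every $F_i$ is a torus.

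The only subtle point I anticipate is confirming that maximality together with positive genus really yields the strict inequality $[F]\cdot E>0$ needed by Lemma \ref{gt}, as opposed to merely $[F]\cdot E\neq 0$; this is exactly the content of Lemma \ref{max} once one notes $F=F^+$. After that, the whole argument reduces to a single invocation of Taubes' theorem (packaged inside Lemma \ref{gt}) followed by elementary adjunction bookkeeping, so I do not foresee any serious further obstacle.
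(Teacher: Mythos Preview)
Your proof is correct and takes a genuinely different, more economical route than the paper's. The paper first derives the equality characterization by combining Proposition~\ref{surface} (that is, $(K_\omega+[F])^2\geq 0$) with the light cone lemma, and then establishes the inequality \eqref{adjoint4} through a lengthy case-by-case analysis over the minimal rational and ruled models $S^2\times S^2$, $\mathbb{CP}^2\#k\overline{\mathbb{CP}^2}$, the two $S^2$-bundles over $\Sigma_{h\geq 1}$, and their blow-ups, each time deducing a contradiction from the combined hypothesis $(K_\omega+[F])\cdot[\omega]<0$ together with $(K_\omega+[F])^2\geq 0$. You instead observe that Lemma~\ref{gt}, which the paper introduced precisely to prove the square inequality, already yields a symplectic representative $\sqcup G_i$ of $K_\omega+[F]$; pairing that representative with $[\omega]$ gives both \eqref{adjoint4} and the equality case in one stroke. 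Your argument is shorter and more conceptual; the paper's case analysis, by contrast, makes the numerics in each model fully explicit and does not rely on the SW wall-crossing input packaged inside Lemma~\ref{gt}.

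One small point worth tightening: Lemma~\ref{gt} requires that $e=[F]$ not be the \emph{class} of a section, whereas the proposition only assumes $F$ itself is not a section. If $M$ is a minimal $S^2$-bundle over $\Sigma_{h\geq 1}$ with fiber class $T$, you should note that every positive-genus connected symplectic surface $F_i$ satisfies $[F_i]\cdot T\geq 1$ (the cases $[F_i]\cdot T\leq 0$ are ruled out by adjunction together with the symplectic-area constraint, exactly as in the paper's own case analysis). Hence a disconnected $F$ automatically has $[F]\cdot T\geq 2$, and the hypothesis of Lemma~\ref{gt} is met. With that one sentence added, your argument is complete.
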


\begin{proof} We first characterize those with $(K_{\omega}+[F])\cdot [\omega]= 0$.
If $F$ is not a section of a genus $g\geq 1$ $S^2$ bundle, then
$(K_{\omega}+[F])^2\geq 0$ by Proposition \ref{surface}. Notice that
$b^+(M)=1$. As $(K_{\omega}+[F])^2\geq 0$ and $[\omega]^2>0$, we can
apply the light cone lemma to $(K_{\omega}+[F])\cdot [\omega]=0$ to
conclude that $K_{\omega}+[F]$ is a torsion class. Since $M$ has no
torsion in homology, in fact, $[F]=-K_{\omega}$.

For any component $F_i$ of $-K_{\omega}$, we  have $-K_{\omega}\cdot
[F_i]=[F_i]^2$. Thus its genus is still $1$.

 It
remains to prove \eqref{adjoint4}.

By Proposition \ref{surface'} and Lemma \ref {disc} it suffices to
show that if $F$ is maximal and \begin{equation} \label{combine}
(K_{\omega}+[F])\cdot [\omega]<0 \quad\hbox{and} \quad
(K_{\omega}+[F])^2\geq 0,
\end{equation}then  we obtain a contradiction, often in the form $g(F)<1$, i.e. $2g(F)-2<0$. Our
argument is a case by case analysis.

\medskip
\noindent  $\bullet$  $S^2\times S^2$.

 In this case $K_{\omega}=-2H_1-2H_2$,
 $[F]=aH_1+bH_2$ for some integers $a$, $b$. Here $H_1$, $H_2$
  are classes of $S^2$ factors with positive symplectic area.
  Then
  $$\begin{array}{ll}
  K_{\omega}+[F]&=(a-2)H_1+(b-2)H_2,\\
  (K_{\omega}+[F])^2&=2(a-2)(b-2).
  \end{array}
  $$
 As $H_1$, $H_2$ have positive symplectic
area,
  $$\begin{array}{ll}
   [\omega]&=xH_1+yH_2, \quad y>0, x>0,\\
  (K_{\omega}+[F])\cdot [\omega]&=x(b-2)+y(a-2).
   \end{array}$$
 Then \eqref{combine} becomes that $$x(b-2)+y(a-2)<0, \quad (a-2)(b-2)\ge 0,$$
which implies that $a,b\leqslant 2$ and at most one of them gets the
value $2$.

If $[F]^2\ge 0$, since $H_1$ and $H_2$ are GW stable classes,  by
Lemma \ref{stable}, we know that $a,b\ge 0$. If $[F]^2<0$, then
$ab<0$, and so one of them should be $1$. It is straightforward to
check that in both cases, we have
$$2g(F)-2=(K_{\omega}+[F])\cdot [F]=(a-2)b+(b-2)a<0.$$


\medskip
\noindent $\bullet$  $\mathbb {CP}^2\sharp k\overline{\mathbb{CP}^2}$

Let $E_i$ be the positive generators of $H_2$ of the
$\overline{\mathbb{CP}^2}$ factors. In this case
$K_{\omega}=-3H+\sum_1^k E_i$ and $[F]=d[H]-\sum_1^k c_iE_i$ for
some $d>0$ and $c_i\ge 1$. Then $$\begin{array}{ll}
K_{\omega}+[F]&=(d-3)H-(c_i-1)E_i, \\
(K_{\omega}+[F])^2&=(d-3)^2-\sum (c_i-1)^2,\\

 [\omega]&= xH-\sum
z_iE_i, \quad x>0, z_i>0, x^2>\sum z_i^2,
\\
(K_{\omega}+[F])\cdot [\omega]&= (d-3)x-\sum z_i(c_i-1).
\end{array}$$
\eqref{combine} becomes
$$(d-3)x<\sum z_i(c_i-1), \quad (d-3)^2\ge \sum (1-c_i)^2.$$
Hence, when $d\ge 3$ we have the following absurd inequality

\begin{eqnarray*}
(d-3)^2x^2 <(\sum z_i(c_i-1))^2
 \le& \sum z_i^2 \cdot \sum (c_i-1)^2
 =(d-3)^2x^2.
\end{eqnarray*}
In fact, what is behind the inequality is the light cone lemma.
 Finally,  if $0<d<3$  then
$$2g(F)-2=(d-3)d-\sum(c_i-1)c_i<0.$$

\medskip
\noindent $\bullet$  Non-trivial $S^2-$bundle over $\Sigma_h$ with
$h\geq 1$.

 In this case let $U$ be the
class of a section with square $1$, $T$ be the class of  a fiber,
both  with positive symplectic area. Then $K_{\omega}=-2U+(2h-1)T$,
and $[F]=aU+bT$ for some integers $a$ and $b$.  Now $$
\begin{array}{ll} K_{\omega}+[F]&=(a-2)U+(2h-1+b)T,
\\
(K_{\omega}+[F])^2&= (a-2)(a-2+4h-2+2b).\end{array}
$$
 As  $U$, $T$ have positive
symplectic area,
    \begin{equation}\label{adbundle}\begin{array}{ll}[\omega]&=xU+yT, \quad x>0, x+y>0, x+2y>0,
    \\
(K_{\omega}+[F])\cdot [\omega]&=
(x+y)(a-2)+x(2h-1+b).\end{array}\end{equation}
 Then \eqref{combine} becomes
$$2h-1+b<-\dfrac{(x+y)(a-2)}{x}, \quad (a-2)(2h-1+b)\ge 0,$$
which implies that $a\le 2$ and $2h-1+b \le 0$, and at
most one equality holds. To proceed we compute that
\begin{equation}\label{s2bundle}
2g(F)-2=(K_{\omega}+[F])\cdot [F]=a(a-2+2h-1+b)+b(a-2).
\end{equation}

If $[F]^2\ge 0$, we also have $a\ge 0$ by Lemma \ref{stable} since
$T$ is a stable class. When $a=0$, then $b$ has to be positive as
$[F]\cdot [\omega]>0$, and by \eqref{s2bundle}, $2g(F)-2=-2b<0$; If
$a=2$ then $2h-1+b <0$ by \eqref{adbundle}, and by \eqref{s2bundle},
$2g(F)-2=2(b+2h-1)<0.$

If $[F]^2<0$, we have $a(a+2b)<0$.  When $a<0$, $a+2b>0$, then
$g(F)<0$ by \eqref{s2bundle}. The case when $a>0$ but $a\ne 1$ is
already analyzed above.

Finally, we analyze the case $a=1$. If $F$ is connected, then it is
a section. If it is not connected, then there is a component with
$a\ne 1$. But the genus of such a component (which is automatically
maximal as $M$ is minimal) is not positive as already shown.

\medskip
\noindent $\bullet$ $S^2\times \Sigma_h, h\geq 1$.

This case is similar to the previous case.

\medskip
\noindent $\bullet$ $(S^2\times \Sigma_h)\sharp k \overline{\mathbb
{CP}^2}$

Let $E_i$ be the positive generators of $H_2$ of the
$\overline{\mathbb{CP}^2}$ factors. In this case let $U$ be the
class of a section with square $0$, $T$ be the class of  a fiber,
both  with positive symplectic area. Then $\mathcal
E_{\omega}=\{E_i, T-E_i\}$ and
$$K_{\omega}=-2U+(2h-2)T+\sum_1^kE_i.$$ Thus $F$ is maximal if
and only if $$[F]=aU+bT-\sum_1^kc_iE_i, \quad a>c_i\geqslant 1.$$ We
explicitly compute,
$$\begin{array}{ll}K_{\omega}+[F]&=(a-2)U+(2h-2+b)T+\sum_1^k(1-c_i)E_i,
\quad a>c_i\geqslant 1,\\
(K_{\omega}+[F])^2&=2(a-2)(2h-2+b)-\sum (c_i-1)^2,\\

[\omega]&=xU+yT-\sum z_iE_i, \quad x, y, z_i>0, 2xy-\sum z_i^2>0,\\
(K_{\omega}+F)\cdot [\omega]&=(a-2)y+(2h-2+b)x-\sum z_i(c_i-1).
\end{array}$$
Then \eqref{combine} becomes  $$(a-2)y+(2h-2+b)x-\sum z_i(c_i-1)<0,
\quad \sum (c_i-1)^2\le 2(a-2)(b+2h-2).$$ When $(a-2)y+(2h-2+b)x\ge
0$,
\begin{eqnarray*}
((a-2)y+(2h-2+b)x)^2
 &<&(\sum z_i(c_i-1))^2\\
 &\le& \sum z_i^2 \cdot \sum (c_i-1)^2\\
 &<&2xy\cdot 2(a-2)(b+2h-2).
\end{eqnarray*}
This is equivalent to saying that $$((a-2)y+(2h-2+b)x)^2<0,$$ which
is a contradiction! Again what is hidden behind is the light cone
lemma.

Now let us suppose $(a-2)y+(2h-2+b)x<0$. If $a<2$,  then by the
maximality condition $a>c_i\ge 1$,  $(M, \omega)$ is in fact minimal
in which case we have treated above. Now, we assume that $a\ge 2$
and $2h-2+b<0$. Then
$$\sum (c_i-1)^2\le 2(a-2)(b+2h-2)\le 0.$$ This forces
$c_i=1$ and $a=2$. In this case $2g(F)-2=2(2h-2+b)<0$.
\end{proof}


\subsection{Existence and Uniqueness of relatively minimal model}
Any  surface can be   made  maximal by blowing down.

\begin{lemma} \label{become-maximal}Suppose $F\subset (M, \omega)$ is a symplectic surface
without sphere components.
 Denote the set of $E$ with $ F\cdot E=0$ by
$\mathcal E_{\omega}^{F}$. Suppose $\{E_i\}\subset \mathcal
E_{\omega}$ is a maximal subset of pairwise orthogonal elements.
Blow down a set of  symplectic $-1$ spheres $S_i$ in the classes
$\{E_i\}$, which are   disjoint from each other and  from $F$,  to
obtain $(M', \omega')$. If we denote the same surface in
$(M,',\omega')$ by $F'$, then  $F'$ is maximal in $(M', \omega')$.
\end{lemma}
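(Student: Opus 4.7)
The strategy is to argue by contradiction: if $F'$ were not maximal in $(M',\omega')$, I will produce a class in $\mathcal{E}_\omega$ pairing trivially with $F$ and orthogonal to every $E_i$, contradicting the maximality hypothesis on the chosen pairwise orthogonal set.

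First I set up the blowdown cleanly. Each $E_i$ satisfies $[F]\cdot E_i = 0$, so by Lemma \ref{max} every $E_i$ is represented by an embedded symplectic $-1$ sphere disjoint from $F$; applying Claim \ref{claim} iteratively together with positivity of intersection (and small symplectic isotopies) one arranges the chosen spheres $S_i$ to be mutually disjoint as well. After the blowdown $\pi : M \to M'$, the surface $F' = F$ lies in $M' \setminus \{p_1,\dots,p_k\}$ with $p_i = \pi(S_i)$, and one has the orthogonal decomposition
\[
H_2(M;\mathbb{Z}) \;=\; \iota H_2(M';\mathbb{Z}) \;\oplus\; \bigoplus_i \mathbb{Z}\langle E_i\rangle,
\]
where $\iota$ sends a class in $H_2(M')$ to the lift represented by any cycle missing the $p_i$.

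Now assume $F'$ is not maximal, so some $E' \in \mathcal{E}_{\omega'}$ satisfies $F' \cdot E' = 0$. Because $F'$ again has only positive-genus components, Claim \ref{claim} applied in $(M',\omega')$ produces an $\omega'$-compatible $J'$ for which both $F'$ and an embedded symplectic representative $S'$ of $E'$ are $J'$-holomorphic. Since $F' \cdot E' = 0$ and $S'$, $F'$ are distinct irreducible $J'$-holomorphic curves, they are disjoint. The one genuinely technical step, and the main obstacle, is to further arrange $S'$ to avoid the finite set $\{p_1,\dots,p_k\}$: this uses that embedded symplectic $-1$ spheres in a fixed homology class form a connected space and can be isotoped off any finite point set (McDuff's isotopy results), so a generic choice of $J'$ suffices. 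Once $S'$ misses the $p_i$, it lifts under $\pi$ to an embedded symplectic $-1$ sphere $\tilde S \subset M$ that is disjoint from each $S_i$ and from $F$.

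Finally I read off the class data. Writing $[\tilde S] = \iota\alpha + \sum_i a_i E_i$ in the orthogonal decomposition, the identity $\pi_*[\tilde S] = E'$ gives $\alpha = E'$, and $\tilde S \cap S_i = \emptyset$ forces
\[
0 \;=\; [\tilde S]\cdot E_i \;=\; -a_i,
\]
so $[\tilde S] = \iota E'$. Thus $\iota E' \in \mathcal{E}_\omega$, it pairs trivially with $F$ (since $\tilde S \cap F = \emptyset$), it is orthogonal to every $E_i$ by construction, and it differs from each $E_i$ because $\pi_*(\iota E') = E' \neq 0$ while $\pi_*(E_i) = 0$. Hence $\{E_i\} \cup \{\iota E'\}$ would be a strictly larger pairwise orthogonal subset of $\mathcal{E}_\omega$ whose members all annihilate $[F]$, contradicting maximality. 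Therefore $F'$ is maximal in $(M',\omega')$.
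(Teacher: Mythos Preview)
Your argument is correct. The paper's proof is entirely different in character: it simply cites McDuff's structure results (Theorem~1.1(ii) and Theorem~3.4 in \cite{Mcjams}), which already encode the fact that blowing down a maximal pairwise-orthogonal family of exceptional classes lying in a prescribed sublattice leaves no further exceptional classes in that sublattice. Your approach instead unpacks this directly by lifting a putative exceptional sphere from $(M',\omega')$ back to $(M,\omega)$, and has the merit of being self-contained rather than deferring to an external reference.

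Two streamlinings are available. First, you do not actually need $S'$ geometrically disjoint from $F'$: once $[\tilde S]=\iota E'$ and $[F]=\iota[F']$, the vanishing $[\tilde S]\cdot[F]=E'\cdot[F']=0$ is purely homological, so the appeal to Claim~\ref{claim} in $(M',\omega')$ can be dropped. Second, the step you flag as the main obstacle --- moving $S'$ off the finite set $\{p_1,\dots,p_k\}$ --- needs no global isotopy theorem: since each $p_i$ lies away from $F'$ (the spheres $S_i$ were disjoint from $F$ by hypothesis), a small Hamiltonian perturbation of $S'$ in a Darboux ball around each $p_i$ pushes $S'$ off that point while keeping it embedded and symplectic. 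With these shortcuts your proof becomes only a few lines.
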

\begin{proof} When $F$ is connected, this is Theorem 1.1(ii) in \cite{Mcjams}. For a disconnected $F$
it  follows from Theorem 3.4 in \cite{Mcjams}, with $\Lambda$ there
being the subgroup orthogonal to the subgroup generated by $[F_i]$.
\end{proof}

\begin{definition} Suppose $F\subset (M, \omega)$ is a symplectic surface
without sphere components. $(M',  \omega', F')$  in Lemma
\ref{become-maximal} is called a relative minimal model of $(M,
\omega, F)$.

For a general symplectic surface $F$, a relative minimal model of
$(M, \omega, F)$ is defined to be a relative minimal model of $(M,
\omega, F^+)$.
\end{definition}

It is well known that in the case of $\kappa^s=-\infty$, there are
more than one minimal models. So the following  uniqueness of
relative minimal model when $F^+$ is not empty is surprising.

\begin{theorem}\label{unique}
If $F^+$ is nonempty, there is a unique relative minimal model.
\end{theorem}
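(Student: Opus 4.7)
The plan is to prove the stronger statement that the set $\mathcal{E}_\omega^{F^+} := \{E \in \mathcal{E}_\omega : E \cdot [F^+] = 0\}$ is itself pairwise orthogonal whenever $F^+$ is nonempty. Once this is in hand, the unique maximal pairwise orthogonal subset of $\mathcal{E}_\omega^{F^+}$ is all of $\mathcal{E}_\omega^{F^+}$, and by Lemma \ref{become-maximal} and Lemma \ref{blowdown} the simultaneous blowdown of symplectic $-1$ spheres in these classes (pairwise disjoint and disjoint from $F^+$) yields a well-defined symplectomorphism class $(M', \omega', F')$, which is then the unique relative minimal model. For distinct $E_1, E_2 \in \mathcal{E}_\omega^{F^+}$, positivity of intersection for a generic $\omega$-tamed $J$ (via Taubes' SW$=$Gr, each $E_i$ admitting an embedded $J$-holomorphic representative) gives $n := E_1 \cdot E_2 \geq 0$, so the task is to rule out $n \geq 1$.

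\textbf{Key reduction via blowdown.} Suppose toward contradiction $n \geq 1$. By Claim \ref{claim} choose $J$ making $F^+$ and the $(-1)$-sphere $S_1$ representing $E_1$ simultaneously $J$-holomorphic; since $F^+$ has positive genus and $[S_1] \cdot [F^+] = 0$, the sphere $S_1$ is disjoint from $F^+$. Blowing down $S_1$ produces $(\tilde M, \tilde \omega)$, and Lemma \ref{blowdown} shows $F^+$ descends to an embedded symplectic surface $\tilde F^+ \subset \tilde M$ with the same positive-genus components and the same class. Using the splitting $H_2(M) = H_2(\tilde M) \oplus \mathbb{Z}[E_1]$, the class $\tilde E_2 = [E_2] + n[E_1]$ satisfies $\tilde E_2^2 = n^2 - 1$ and $\tilde E_2 \cdot [\tilde F^+] = 0$. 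Geometrically, the sphere representing $E_2$ meets $S_1$ transversally in $n$ points, so its image under blowdown is a symplectic immersed sphere with a single ordinary $n$-fold singular point, which by the smoothing techniques of \cite{LU} perturbs to an embedded symplectic surface $\tilde S \subset \tilde M$ of formal genus $n(n-1)/2$ in the class $\tilde E_2$.

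\textbf{Deriving the contradiction.} For $n = 1$, $\tilde S$ is an embedded symplectic sphere of square $0$ orthogonal to the positive-genus $\tilde F^+$. By McDuff's classification, $\tilde M$ is then rational or ruled; iterating the preceding reduction one arrives at a minimal rational or ruled manifold still containing a square-$0$ symplectic sphere orthogonal to a positive-genus symplectic surface. A case-by-case analysis in the spirit of the proof of Proposition \ref{surface'} rules this out: $\mathbb{CP}^2$ has no square-$0$ class, while on $S^2 \times S^2$ or on an $S^2$-bundle over $\Sigma_h$ the orthogonal complement of the fiber class is generated by the fiber itself, and adjunction then forces any symplectic representative to have non-positive formal genus. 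For $n \geq 2$, $\tilde S$ is itself a positive-genus symplectic surface with $\tilde S^2 > 0$ disjoint from $\tilde F^+$; one may either enlarge $\tilde F^+$ to the embedded symplectic surface $\tilde F^+ \sqcup \tilde S$ (still with positive-genus components and no sphere components) and combine the light cone lemma with the adjunction analysis of Propositions \ref{crucial} and \ref{surface'} to obtain a contradiction, or iterate the blowdown construction to lower $\tilde E_2^2$ and reduce to the $n=1$ case. The main technical obstacle is controlling the orthogonality $[\tilde F^+] \cdot [\tilde S] = 0$ through the further blowdowns to the minimal model in the non-minimal reduction above, which is accomplished by iterated use of Lemma \ref{blowdown} together with the immersed-curve smoothing machinery of \cite{LU}.
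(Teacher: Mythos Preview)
Your approach differs genuinely from the paper's, and while the $n=1$ branch is close to salvageable, the $n\ge 2$ branch has a real gap.

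The paper does not blow down a single $-1$ sphere and study the image of the other one. Instead it first disposes of the easy cases: if $M$ is not rational or ruled, the whole set $\mathcal E_\omega$ is already pairwise orthogonal; if $M$ is irrationally ruled, the explicit list $\mathcal E_\omega=\{E_i,\,T-E_i\}$ together with adjunction suffices. The only hard case is $M=\mathbb{CP}^2\#l\overline{\mathbb{CP}^2}$ with $l\ge 2$. There the paper blows down a \emph{maximal} orthogonal subset of $\mathcal E_\omega^{F}$ all at once, arriving at a relative minimal model $(M',\omega',F')$. The decisive object is the adjoint class $K_{\omega'}+[F']$: this is where Propositions \ref{surface} and \ref{surface'} enter, giving $(K_{\omega'}+[F'])^2\ge 0$ and $(K_{\omega'}+[F'])\cdot[\omega']\ge 0$. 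Any further $G\in\mathcal E_\omega^{F}$ not among the blown-down classes then descends to a class $[C]$ in $M'$ with $[C]^2\ge 0$, $[C]\cdot[\omega']>0$, but $(K_{\omega'}+[F'])\cdot[C]<0$, and the light cone lemma (using $b^+=1$) yields the contradiction. The adjoint class thus serves as a ``barrier'' in the light cone; your argument never constructs an analogue of it.

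In your $n\ge 2$ branch you produce in $\tilde M$ a positive-genus symplectic surface $\tilde S$ with $[\tilde S]^2=n^2-1>0$ disjoint from $\tilde F^+$, and then assert that ``the light cone lemma with the adjunction analysis of Propositions \ref{crucial} and \ref{surface'}'' yields a contradiction. But those propositions only give $(K+[\,\cdot\,])^2\ge 0$ for a maximal surface; they do not forbid the configuration you have produced. Concretely, in $b^+=1$ the light cone tells you $[\tilde F^+_i]^2\le 0$ for each component orthogonal to $[\tilde S]$, and adjunction then forces $K_{\tilde\omega}\cdot[\tilde F^+_i]>0$; nothing contradictory follows. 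The alternative you offer, ``iterate the blowdown construction to lower $\tilde E_2^2$ and reduce to the $n=1$ case'', does not make sense as stated: after blowing down $E_1$ the class $\tilde E_2$ has square $n^2-1>0$ and is no longer a $-1$ class, so there is nothing to iterate.

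There are two smaller issues. First, you need $F^+$, a sphere in $E_1$, and a sphere in $E_2$ to be simultaneously controlled (so that after the blowdown the smoothed $\tilde S$ is genuinely disjoint from $\tilde F^+$); Claim \ref{claim} handles one $-1$ class at a time, and a three-curve statement needs its own justification. Second, in the $n=1$ reduction to the minimal model you must blow down $-1$ classes orthogonal to the square-$0$ sphere $\tilde S$; since $\tilde S$ has genus zero it lies outside the scope of Lemma \ref{blowdown} and Claim \ref{claim}. McDuff's structure theorem does supply such classes (the exceptional divisors of the ruled-surface blowup), but ``iterated use of Lemma \ref{blowdown}'' is not the right citation.
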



\begin{proof} Without loss of generality we can assume  $F=F^+$.

Recall  that $$\mathcal E_{\omega}^{F}=\{E\in \mathcal E_{\omega}|
E\cdot [F]=0\}.$$

When $M$ is not rational or ruled, the classes in  $\mathcal
E_{\omega}$ are pairwisely orthogonal, and represented by disjoint
symplectic $-1$ spheres. Of course the same is true for  $\mathcal
E_{\omega}^{ F}$. Thus there is a unique way to make $F$ maximal.

When $M$ is irrationally ruled, $\mathcal E_{\omega}$ can be
described as
$$\{E_1, T-E_1, \cdots, E_l, T-E_l\},$$
where $T$ is the unique $\omega-$positive fiber class.
 If $\mathcal E_{\omega}^{F}$ contains both $E_1$ and
$T-E_1$, then $[F]\cdot T=0$. As $T$ is a GW stable class, we have
$F_i\cdot T=0$ as well by Lemma \ref{stable}. It follows that each
$[F_i]$ is of the form $a_i T -\sum c_j E_j$, $a_i\geq 0$. But by
the adjunction formula, such a component has genus at most zero. As
each component of $F$ is of positive genus, $\mathcal
E_{\omega}^{F}$ contains only pairwisely orthogonal classes. Again
there is a unique way to make $F$ maximal in this case.

 If  $M=\mathbb {CP}^2\# \overline{\mathbb {CP}^2}$, there is a
unique class in $\mathcal E_{\omega}$ and hence at most one class in
$\mathcal E_{\omega}^{F}$.

The remaining case is $M=\mathbb {CP}^2\# l\overline{\mathbb
{CP}^2}$ with $l\geq 2$. The proof is based on the properties of the
adjoint class of a {\it maximal} surface established in 3.2.

 Suppose
$(M',\omega', F')$ is a relative minimal model of $(M, \omega, F)$.
Then by Lemmas \ref{surface} and \ref{surface'} we can assume that
\begin{equation}\label{'}\begin{array}{ll}
(K_{\omega'}+[F'])^2&\geq 0,\cr
(K_{\omega'}+[F'])\cdot[\omega']&\geq 0,
\end{array}
\end{equation}
 since $M$ is not an $S^2-$bundle over
$\Sigma_h$ with $h\geq 1$. Let $S_i$ be a set of disjoint symplectic
$-1$ spheres which are blown down to obtain $(M', \omega')$. Notice
that the  $S_i$ are also assumed to be disjoint form $F$. Let
$\mathcal U=\{E_i=[S_i]\}$. Then
\begin{equation}\label{canonical classes}
K_{\omega}=\pi^* K_{\omega'}+\sum E_i, \end{equation}

 Suppose $G\in \mathcal
E_{\omega}^{F}$ and is distinct from $E_i$. Suppose also that  there
is some $E_i\in \mathcal U$ such that  $E_i\cdot G>0$. After
choosing a symplectic $-1$ sphere in the class $G$ which is disjoint
from $F$ and intersects the $S_i$ transversally and non-negatively,
by Lemma \ref{blowdown}  we see that there is possibly immersed
symplectic surface $C$ in $(M', \omega')$ with the following
properties:

$\bullet$ $C$ is disjoint from $F'=F$, so
\begin{equation}\label{FC} [ F']\cdot [C]=0.
\end{equation}.

$\bullet$  $[C]$ is related to $G$ via
\begin{equation}\label{CG}
G=\iota_*[C]-\sum (E_i\cdot G)E_i.
\end{equation}

$\bullet$  By \eqref{CG}, we have
\begin{equation}\begin{array}{ll}
[C]^2&=[G]^2+2\sum (E_i\cdot G)^2-\sum(E_i\cdot
G)^2\\
&=-1+\sum(E_i\cdot G)^2\\
&\geq 0.\end{array}
\end{equation}

$\bullet$   By \eqref{canonical classes} and \eqref{CG}, we have
\begin{equation}\begin{array}{ll}\label{KC} K_{\omega'}\cdot [C]&=\pi^*K_{\omega'}\cdot \iota_*[C]\\
&=(K_{\omega}-\sum E_i)(G+\sum(E_i\cdot G)E_i)\\
&=K_{\omega}\cdot G +\sum (E_i\cdot G)(K_{\omega}\cdot E_i-E_i^2-1)\\
&= K_{\omega}\cdot G-\sum (E_i\cdot G)\\
&= -1- \sum E_i\cdot G\\
&<0
\end{array}
\end{equation}

$\bullet$ By \eqref{FC} and \eqref{KC} we conclude
\begin{equation}\label{hard} (K_{ \omega'}+[F'])\cdot [C]<0.
\end{equation}

Notice that both $[C]$, $K_{\omega'}+[F']$ have non-negative
square by \eqref{CG} and \eqref{'}, and both pair positively with
$[\omega']$ by \eqref{'}. Since $b^+(M)=1$, \eqref{hard} violates
the light cone lemma. This contradiction again shows that $\mathcal
E_{\omega}^{F}$ contains only pairwisely orthogonal classes.
Therefore there is a unique way to make $F$ maximal in this case.
\end{proof}

We remark that there is an alternative argument when  $b^+(M)=1$ and
there is a component, say $F_1$, with $[F_1]^2\geq 0$. In this case
we can directly show that the classes in $\mathcal
E_{\omega}^{\tilde F}$ are pairwise orthogonal.
 Suppose $G_1, G_2 \in \mathcal
E_{\omega}^{\tilde F}$ and $G_1\cdot G_2\ne 0$. Then $G_1\cdot
G_2>0$. If $G_1\cdot G_2\geq 2$, then $(G_1+G_2)^2>0$.  Since
$b^+(M)=1$, this contradicts to the light cone lemma as $[F_1]^2\geq
0$ and $[F_1]\cdot (G_1+G_2)=0$. If $G_1\cdot G_2=1$ then
$(G_1+G_2)^2=0$, we still get a contradiction unless $[F_1]$ and
$G_1+G_2$ are proportional to each other. However, this is
impossible due to the adjunction formula and $K_{\omega}\cdot
(G_1+G_2)=-2$.

When $[F']=-K_{\omega'}$ we can also directly argue that if  $G$ is
a $-1$ class of $(M, \omega)$ distinct from $E_i$, then $G$ does not
lie in $\mathcal E_{\omega}^{F}$.
 Notice that $$G\cdot E_i\geq 0\quad \hbox{and} \quad
K_{\omega}\cdot G=-1,$$
 and hence by \eqref{canonical classes}
$$-[F]\cdot G=\pi^*K_{\omega'}\cdot G=K_{\omega}\cdot
G-\sum (E_i\cdot G)\leq -1.$$

\subsection{$\kappa^s(M, \omega, F)$}
In this subsection we define the relative Kodaira dimension of a
$4-$dimensional symplectic manifold relative to a possibly
disconnected, embedded  symplectic surface.

\subsubsection{Definition for a
 maximal $F$ without sphere components} We first assume that $F$ is
 maximal and has no sphere components.
 {\definition \label{symp Kod}
Let $F\subset(M, \omega)$ be a  maximal symplectic surface without
sphere components.
  Then the relative Kodaira dimension of
$(M,F,\omega)$
 is defined in the following way:
if $F$ is empty, then $(M,\omega)$ is necessarily minimal and
$\kappa^s(M, \omega, F)$ is defined to be $\kappa^s(M, \omega)$.
Otherwise,

\[
\kappa^s(M, \omega, F)=\left\{\begin{array}{cc}
-\infty & \hbox{if $(K_{\omega}+[{F}])\cdot \omega<0$ or\,\, $(K_{\omega}+[{F}])^2<0$},\\
0& \hbox{ if $(K_{\omega}+[{F}])\cdot \omega=0$ and $(K_{\omega}+[{F}])^2=0$},\\
1& \hbox{ if $(K_{\omega}+[{F}])\cdot \omega>0$ and $(K_{\omega}+[{F}])^2=0$},\\
2& \hbox{ if $(K_{\omega}+[{F}])\cdot \omega>0$ and $(K_{\omega}+[{F}])^2>0$}.\\
\end{array}\right.
\]}


Next, we prove that the above definition is well defined.

\begin{theorem} \label{well def}
Definition \ref{symp Kod} is well-defined.
\end{theorem}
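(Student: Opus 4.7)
The goal is to verify that the four cases in Definition \ref{symp Kod} are mutually exclusive and exhaustive, so that the relative Kodaira dimension is assigned exactly one value in $\{-\infty,0,1,2\}$ for every maximal $F$ without sphere components. Mutual exclusivity is immediate from the strict inequalities, so the substance of the proof lies in exhaustiveness. The only pair of signs for $\bigl((K_\omega+[F])\cdot[\omega],\,(K_\omega+[F])^2\bigr)$ that is not listed in any of the four cases is
\[
(K_\omega+[F])\cdot[\omega]=0 \quad\hbox{and}\quad (K_\omega+[F])^2>0,
\]
so the entire argument reduces to ruling this configuration out. The empty-$F$ case is handled separately in the definition; since maximality of $\emptyset$ forces $(M,\omega)$ to be minimal, the formula collapses to Definition \ref{sym Kod'} there, and the remaining task concerns only nonempty $F$.

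The plan is to split according to $\kappa^s(M,\omega)$. First, if $\kappa^s(M,\omega)\geq 0$, then Proposition \ref{omega} gives $(K_\omega+[F])\cdot[\omega]\geq 0$, with equality only when $(M,\omega)$ is minimal, $\kappa^s(M,\omega)=0$, and $F=\emptyset$. In that single exceptional subcase one has $(K_\omega+[F])^2=K_\omega^2=0$ by Definition \ref{sym Kod'}, so the forbidden configuration cannot occur.

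Next, if $\kappa^s(M,\omega)=-\infty$, then $(M,\omega)$ is rational or ruled, and hence $b^+(M)=1$. The light cone lemma forces any class $\alpha$ with $\alpha^2>0$ to satisfy $\alpha\cdot[\omega]\neq 0$, since $[\omega]^2>0$. Taking $\alpha=K_\omega+[F]$ excludes the forbidden configuration at once. Combining the two cases yields the desired well-definedness.

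The main obstacle is the $\kappa^s(M,\omega)\geq 0$ regime when $b^+(M)>1$, where the light cone lemma is unavailable; this is precisely the point at which Proposition \ref{omega} (and the work of 3.2 it rests on) is indispensable. The rational/ruled case, by contrast, is a formal consequence of $b^+=1$ and the light cone lemma.
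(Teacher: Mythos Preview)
Your proof is correct and follows essentially the same route as the paper: identify the single missing sign pattern $(K_\omega+[F])\cdot[\omega]=0$, $(K_\omega+[F])^2>0$, dispose of the $\kappa^s(M,\omega)\geq 0$ case via Proposition~\ref{omega}, and handle the rational/ruled case with $b^+=1$ and the light cone lemma. You are slightly more explicit than the paper in unpacking the equality case of Proposition~\ref{omega} (checking that $F=\emptyset$ forces $K_\omega^2=0$), but the argument is the same.
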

\begin{proof}
The only thing we need to check is that there  is no maximal surface
without sphere components  $F\subset (M, \omega)$ with
$$(K_{\omega}+[F])\cdot [\omega]=0, \quad \hbox{and}\quad
(K_{\omega}+[F])^2>0.$$ By Proposition \ref{omega} it remains to
discuss the case when $M$ is rational or ruled. As $b^+(M)=1$ in
this case, the statement follows from the light cone lemma and
$[\omega]^2\geq 0$.

\end{proof}

As mentioned in the introduction, the main result in
\cite{LY} has the following simple interpretation.

\begin{theorem} \label{fibersum}
Let $(M,\omega)$ be a $4-$dimensional relatively minimal fiber sum
of $(M_1,\omega_1)$ and $(M_2,\omega_2)$ along connected genus
$g\geq 1$ symplectic surfaces $F_i\subset (M_i,\omega_i)$. Then
\begin{equation}\label{sum4}\kappa^s(M, \omega)=\max\{\kappa^s(M_1,\omega_1,
F_1),\kappa^s(M_2,\omega_2, F_2)\}.\end{equation}
\end{theorem}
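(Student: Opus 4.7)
\textbf{Proof plan for Theorem \ref{fibersum}.} The plan is to reduce the statement to the arithmetic of the formal symbol $\kappa^s(r)$ applied to the two quantities $K_\omega^2$ and $K_\omega\cdot[\omega]$, by combining the cut-and-paste formulas from \cite{LY} with the non-negativity results of Section 3.2.

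First I would recall (or reprove via Mayer--Vietoris) the two key identities from \cite{LY}: in the symplectic fiber sum $M=M_1\#_{F_1=F_2}M_2$, the restrictions of $K_\omega$ and $[\omega]$ to each piece $M_i\setminus\nu(F_i)$ agree with $K_{\omega_i}+[F_i]$ and $[\omega_i]$ respectively, and using $[F_1]^2=-[F_2]^2$ together with $2g(F_i)-2=K_{\omega_i}\cdot[F_i]+[F_i]^2$, the intersection form of $M$ produces
\begin{equation*}
K_\omega^2=(K_{\omega_1}+[F_1])^2+(K_{\omega_2}+[F_2])^2,\qquad K_\omega\cdot[\omega]=(K_{\omega_1}+[F_1])\cdot[\omega_1]+(K_{\omega_2}+[F_2])\cdot[\omega_2].
\end{equation*}
(The verification $K_\omega^2=K_{\omega_1}^2+K_{\omega_2}^2+8(g-1)$ via $\chi$ and $\sigma$ is a useful sanity check.)

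Next, since the fiber sum is relatively minimal and $g\geq 1$, Usher's theorem forces $(M,\omega)$ itself to be minimal, so formula \eqref{add} applies:
\begin{equation*}
\kappa^s(M,\omega)=\kappa^s(K_\omega^2)+\kappa^s(K_\omega\cdot[\omega]).
\end{equation*}
I would then invoke Propositions \ref{crucial}, \ref{omega}, \ref{surface}, and \ref{surface'}: because each $F_i$ is connected of positive genus and maximal (this last point being what ``relatively minimal'' encodes), both $(K_{\omega_i}+[F_i])^2\geq 0$ and $(K_{\omega_i}+[F_i])\cdot[\omega_i]\geq 0$, with the only possible exception being the case where some $F_i$ is a section of a genus $\geq 1$ $S^2$-bundle. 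This exceptional case would be dealt with separately by a direct check (in that situation the $M_i$ in question is ruled and the relative adjoint class has a very explicit form, so both sides of \eqref{sum4} can be computed by hand).

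With both summands in the two identities non-negative, the elementary observation that $\kappa^s(x+y)=\max(\kappa^s(x),\kappa^s(y))$ for $x,y\geq 0$ gives
\begin{equation*}
\kappa^s(K_\omega^2)=\max_i\kappa^s\bigl((K_{\omega_i}+[F_i])^2\bigr),\qquad \kappa^s(K_\omega\cdot[\omega])=\max_i\kappa^s\bigl((K_{\omega_i}+[F_i])\cdot[\omega_i]\bigr).
\end{equation*}
A short case analysis on the three possible values in $\{0,1,2\}$ of $\kappa^s(M_i,\omega_i,F_i)$ then collapses the sum of the two maxima to the maximum of the two sums, giving \eqref{sum4}. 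The crucial monotonicity in this last step is that whenever $(K_{\omega_i}+[F_i])^2>0$ holds for some $i$, the light cone lemma applied to $[\omega_i]$ (which already pairs non-negatively with $K_{\omega_i}+[F_i]$) forces $(K_{\omega_i}+[F_i])\cdot[\omega_i]>0$ as well, so the index realizing the $\kappa^s=2$ maximum in one sum automatically realizes the $\kappa^s=1$ maximum in the other.

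The main obstacle is the non-negativity step: it is what pins down $(K_\omega^2,K_\omega\cdot[\omega])$ inside the non-negative quadrant so that the formal symbol $\kappa^s$ becomes additive-as-maximum. The section-of-$S^2$-bundle exception is the genuinely fiddly case, but once it is disposed of (either by excluding it via the relative minimality hypothesis or by explicit computation of both sides) the remainder of the argument is bookkeeping.
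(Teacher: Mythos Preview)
The paper does not give its own proof of Theorem~\ref{fibersum}; it presents the statement as a reinterpretation of the main result of \cite{LY} and leaves the argument there. So there is nothing in the paper to compare your plan against beyond the indication that the ingredients are the cut-and-paste identities for $K_\omega^2$ and $K_\omega\cdot[\omega]$ from \cite{LY} together with the non-negativity results of Section~3.2. In that sense your reconstruction is exactly what one expects the argument in \cite{LY} to look like, and your use of Usher's minimality theorem plus formula~\eqref{add} to reduce everything to the formal arithmetic of $\kappa^s(r)$ is on target. The monotonicity step (that $(K_{\omega_i}+[F_i])^2>0$ forces $(K_{\omega_i}+[F_i])\cdot[\omega_i]>0$) is also correct: when $\kappa^s(M_i)\ge 0$ it follows from Proposition~\ref{omega}, and when $\kappa^s(M_i)=-\infty$ one has $b^+=1$ and the light cone lemma applies as you say; this is precisely Theorem~\ref{well def}.

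One caution about the exceptional case. You write that when some $F_i$ is a section of a genus $\ge 1$ $S^2$-bundle ``both sides of \eqref{sum4} can be computed by hand''. This is not an innocuous bookkeeping step. In that situation the sum with $(M_i,F_i)$ is smoothly trivial (cf.\ Remark~\ref{-infty'}), so the left side becomes $\kappa^s(M_j,\omega_j)$ while the right side is $\kappa^s(M_j,\omega_j,F_j)$; by Theorem~\ref{preclass} these can differ in general. Moreover your additivity formula for $K_\omega\cdot[\omega]$ now has a strictly negative summand, so the $\kappa^s(x+y)=\max(\kappa^s(x),\kappa^s(y))$ identity no longer applies. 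You should either check carefully how \cite{LY} handles this boundary case (likely via additional hypotheses encoded in ``relatively minimal''), or argue directly that the trivial-sum identification forces $\kappa^s(M_j,\omega_j)=\kappa^s(M_j,\omega_j,F_j)$ under the standing assumptions; do not treat it as a routine verification.
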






\subsubsection{Comparing the relative and absolute Kodaira dimensions}
\begin{theorem} \label{preclass}
Assume $F$ is a maximal symplectic surface without sphere components
in $(M, \omega)$, then
\begin{equation}\label{compare}\kappa^s(M, \omega, F)\ge \kappa^s(M, \omega).
\end{equation}

\end{theorem}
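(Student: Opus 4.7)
The plan is to reduce the statement to comparing the two numerical quantities $(K_{\omega}+[F])^2,\, (K_{\omega}+[F])\cdot[\omega]$ that govern $\kappa^s(M,\omega,F)$ against $K_{\omega}^2,\, K_{\omega}\cdot[\omega]$ that govern $\kappa^s(M,\omega)$, and then to read off each case directly from Propositions \ref{crucial}, \ref{crucial=0} and \ref{omega}. Two trivial reductions come first: if $F=\emptyset$ then by Definition \ref{symp Kod} we have $\kappa^s(M,\omega,F)=\kappa^s(M,\omega)$, so the inequality is an equality; and if $\kappa^s(M,\omega)=-\infty$ then \eqref{compare} is automatic. Thus I may assume $F\neq\emptyset$ (so each component of $F$ has genus $\ge 1$, since $F$ has no sphere components) and $\kappa^s(M,\omega)\ge 0$.

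Under these assumptions, Proposition \ref{crucial} gives $(K_{\omega}+[F])^2\ge 0$ and Proposition \ref{omega} gives $(K_{\omega}+[F])\cdot[\omega]\ge 0$. Moreover, the equality clause of Proposition \ref{omega} rules out $(K_{\omega}+[F])\cdot[\omega]=0$ whenever $F\neq\emptyset$, so in fact $(K_{\omega}+[F])\cdot[\omega]>0$. In particular $\kappa^s(M,\omega,F)\ge 0$, which already settles the case $\kappa^s(M,\omega)=0$.

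If $\kappa^s(M,\omega)=1$, I still have $(K_{\omega}+[F])^2\ge 0$ and $(K_{\omega}+[F])\cdot[\omega]>0$, so by Definition \ref{symp Kod} the relative Kodaira dimension is at least $1$. If $\kappa^s(M,\omega)=2$, then Proposition \ref{crucial=0} says that $(K_{\omega}+[F])^2=0$ would force $\kappa^s(M,\omega)\in\{0,1\}$, contradicting $\kappa^s=2$; hence $(K_{\omega}+[F])^2>0$, and combined with $(K_{\omega}+[F])\cdot[\omega]>0$ this gives $\kappa^s(M,\omega,F)=2$.

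The structural observation behind all four cases is simply the splitting
\[
\kappa^s(M,\omega,F)=\kappa^s\bigl((K_{\omega}+[F])^2\bigr)+\kappa^s\bigl((K_{\omega}+[F])\cdot[\omega]\bigr),
\]
which mirrors the formula \eqref{add} for the absolute Kodaira dimension; Propositions \ref{crucial}, \ref{crucial=0}, \ref{omega} show each summand on the right dominates its absolute counterpart. There is no real obstacle here: the work was done in Section 3.2, and all that remains is this bookkeeping. The only point that requires a little care is the strict positivity $(K_{\omega}+[F])\cdot[\omega]>0$ when $F\neq\emptyset$, which is exactly the equality clause of Proposition \ref{omega} and is needed to pass from $\kappa^s(M,\omega)=1$ to $\kappa^s(M,\omega,F)\ge 1$.
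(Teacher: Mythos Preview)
Your proof is correct. The organization differs from the paper's in two ways worth noting.

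First, you case on the value of $\kappa^s(M,\omega)$, whereas the paper cases on $\kappa^s(M,\omega,F)$. These are of course logically equivalent, but your direction is more natural given that the propositions in Section~3.2 all have $\kappa^s(M,\omega)\ge 0$ as a hypothesis.

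Second, and more substantively, for the critical case (your $\kappa^s(M,\omega)=2$, the paper's $\kappa^s(M,\omega,F)=1$) you invoke Proposition~\ref{crucial=0} as a black box to conclude $(K_\omega+[F])^2>0$, while the paper instead passes to the minimal model $(N,\sigma)$ and computes directly that $K_\sigma^2\le 0$ from $(K_\omega+[F])^2=0$. The paper's computation is essentially a reproof of the relevant implication in Proposition~\ref{crucial=0}, so your route is more modular and avoids redundancy; the paper's is more self-contained. Either way the content is the same.

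One small remark: your concluding ``structural observation'' that each summand in $\kappa^s((K_\omega+[F])^2)+\kappa^s((K_\omega+[F])\cdot[\omega])$ dominates its absolute counterpart is heuristically right but should not be taken too literally, since $\kappa^s(M,\omega)$ is computed on the minimal model rather than on $(M,\omega)$ itself. This does not affect the argument, which is complete in your first three paragraphs.
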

\begin{proof}
 \eqref{compare}
certainly holds when $\kappa^s(M,\omega, F)=2$.

To deal with the case of  $\kappa^s(M,\omega, F)=1$, let us
introduce $(N, \sigma), K_{\sigma}, c_i, F'$ as in the proof of
Proposition \ref{crucial}. We can assume that $\kappa^s(M,
\omega)\geq 0$, otherwise the inequality \eqref{compare} holds
automatically. Recall that  it is shown in the proof of Proposition
\ref{crucial} that $K_{\sigma}\cdot [F']\geq 0$. As
$(K_{\omega}+[F])^2=0$, it follows that
$$\begin{array}{ll}
K_{\sigma}^2&=(K_{\sigma}+[F'])^2-(2K_{\sigma}\cdot [F']+[F']^2)\cr
&=(K_{\omega}+[F])^2+\sum(c_i-1)^2-(K_{\sigma}\cdot
[F']+[F']^2)-K_{\sigma}\cdot [F']\cr &=
\sum(c_i-1)^2-(K_{\omega}\cdot[F]+[F]^2)-\sum
(c_i^2-c_i)-K_{\sigma}\cdot [F']\cr &\leq \sum(1-c_i) \leq 0.
\end{array}
$$
Thus, in this case, we also have $\kappa^s(M, \omega)=\kappa^s(N,
\sigma)\le 1=\kappa^s(M,\omega, F)$.

If $\kappa^s(M, \omega, F)=0$, then $(K_{\omega}+[F])\cdot
[\omega]=0$. By Proposition \ref{omega}, $\kappa^s(M,
\omega)=-\infty$ when $F\neq \emptyset$, and $\kappa^s(M, \omega)=0$
when $F= \emptyset$.

Now let us check the case of $\kappa^s(M, \omega, F)=-\infty$. If
$(K_{\omega}+[F])\cdot \omega<0$, we have $\kappa^s(M,
\omega)=-\infty$
  by Proposition \ref{omega}.
If $(K_{\omega}+[F])^2<0$, then $\kappa^s(M, \omega)=-\infty$ by
Propositions \ref{crucial} and \ref{surface'}.

\end{proof}


\subsubsection{Classification when $\kappa^s(M, \omega, F)=-\infty$}

\begin{theorem} \label{-infty} Suppose a nonempty surface $F\subset(M, \omega)$ is maximal with each component positive genus. Then
 $\kappa^s(M, \omega, F)=-\infty$  if and only if $M$ is  a genus $h$ $S^2$ bundle with $h\ge 1$,
 and $F$ is a
section.


\end{theorem}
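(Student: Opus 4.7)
The plan is to extract both directions from the sign estimates for the adjoint class $K_{\omega}+[F]$ collected in Section 3.2, plus a short light-cone calculation against the fiber class for the reverse direction.

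For the forward implication, suppose $\kappa^s(M,\omega,F)=-\infty$, so $(K_{\omega}+[F])^{2}<0$ or $(K_{\omega}+[F])\cdot[\omega]<0$. I first rule out $\kappa^s(M,\omega)\ge 0$: in that case Proposition \ref{crucial} gives $(K_{\omega}+[F])^{2}\ge 0$, and Proposition \ref{omega} gives $(K_{\omega}+[F])\cdot[\omega]\ge 0$ with equality only if $F=\emptyset$; since $F$ is nonempty, both inequalities hold and the second is strict, contradicting $\kappa^s(M,\omega,F)=-\infty$. So $M$ must be rational or ruled. If now $F$ were not a section of a genus $h\ge 1$ $S^{2}$ bundle, Propositions \ref{surface} and \ref{surface'} would supply $(K_{\omega}+[F])^{2}\ge 0$ and $(K_{\omega}+[F])\cdot[\omega]\ge 0$, again contradicting $\kappa^s(M,\omega,F)=-\infty$. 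Hence $M$ admits an $S^{2}$-bundle structure $\pi:M\to\Sigma_{h}$ with $h\ge 1$, and $F$ is a section.

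For the reverse implication, assume $\pi:M\to\Sigma_{h}$ is an $S^{2}$ bundle with $h\ge 1$ and $F$ is a section; let $T\in H_{2}(M;\mathbb{Z})$ be the fiber class. Then $T^{2}=0$, $[\omega]\cdot T>0$, $K_{\omega}\cdot T=-2$ by adjunction on a spherical fiber, and $[F]\cdot T=1$ because $F$ meets each fiber transversely in one point, so
\[
(K_{\omega}+[F])\cdot T=-1.
\]
If both $(K_{\omega}+[F])^{2}\ge 0$ and $(K_{\omega}+[F])\cdot[\omega]\ge 0$ held, then since $b^{+}(M)=1$ and $T$ is isotropic with positive $[\omega]$-pairing, the light cone lemma would force $(K_{\omega}+[F])\cdot T\ge 0$, contradicting the displayed equality. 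Thus one of the two quantities is strictly negative, giving $\kappa^s(M,\omega,F)=-\infty$. The main obstacle in writing this up is simply marshalling Propositions \ref{crucial}, \ref{omega}, \ref{surface}, \ref{surface'} in the right order; no new geometric input is needed beyond the light-cone observation $(K_{\omega}+[F])\cdot T=-1$.
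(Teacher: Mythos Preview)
Your forward implication is essentially the paper's: the paper invokes Theorem~\ref{preclass} to get $\kappa^s(M,\omega)=-\infty$ and then appeals to Propositions~\ref{surface} and~\ref{surface'}, which is exactly what you do after unpacking Theorem~\ref{preclass} into its ingredients (Propositions~\ref{crucial} and~\ref{omega} for the $\kappa^s\ge 0$ case).

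Your reverse implication, however, is genuinely different and cleaner. The paper proceeds by explicit coordinate computation: writing $[F]=[\Sigma_h]+b[S^2]$ in $S^2\times\Sigma_h$ (and analogously for the twisted bundle), it computes $(K_{\omega}+[F])^2$ and $(K_{\omega}+[F])\cdot[\omega]$ directly and checks that one of them is negative in every case. Your argument avoids coordinates entirely by pairing $K_{\omega}+[F]$ with the fiber class $T$ to get $-1$ and then invoking the light cone lemma (using $b^+(M)=1$, $T^2=0$, $T\cdot[\omega]>0$). This handles both the product and the twisted bundle uniformly in one line. The only small point worth making explicit is the boundary case $(K_{\omega}+[F])\cdot[\omega]=0$: then $K_{\omega}+[F]$ lies in $[\omega]^\perp$, which is negative definite, so $(K_{\omega}+[F])^2\ge 0$ forces $K_{\omega}+[F]=0$ and hence $(K_{\omega}+[F])\cdot T=0\ne -1$; this is implicit in the light cone lemma but worth a sentence. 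What the paper's explicit computation buys is the exact values of $(K_{\omega}+[F])^2$ and $(K_{\omega}+[F])\cdot[\omega]$ in terms of $b$ and $h$, which are not needed here but could be useful elsewhere.
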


\begin{proof}
By Theorem \ref{preclass} $M$ satisfies $\kappa^s(M)=-\infty$. Thus
the only if part of the statement is a direct consequence of
Propositions \ref{surface} and \ref{surface'}.

Let us verify the if part. Suppose either  $M$ is $S^2 \times
\Sigma_h$, $[F]=[\Sigma_h]+b[S^2]$, or
 $M$ is a nontrivial $S^2$ bundle over $\Sigma_h$, $[F]=U+bT$.

We cheek the case of  $S^2\times \Sigma_h$, the other case is
similar. As in 3.2, we compute in this case
$$\begin{array}{ll}(K_{\omega}+[F])^2&=-(b+2h-2),\\
(K_{\omega}+[F])\cdot [\omega]&=-y+(b+2h-2)x.
\end{array}
$$
If $(K_{\omega}+[F])^2<0$, then $\kappa^s(M,  \omega, F)=-\infty$.
If $(K_{\omega}+[F])^2\geq 0$, then $b+2h-2\leq 0$. Since $x>0,
y>0$, we have $(K_{\omega}+[F])\cdot [\omega]<0$, so $\kappa^s(M,
\omega, F)=-\infty$ as well.

\end{proof}


\begin{remark} \label{-infty'} Notice that this classification in Theorems \ref{-infty}  is
independent of $\omega$. This may not be so obvious, and  actually
it follows from Theorem \ref{fibersum} as summing with an
$S^2-$bundle along a section is the so called smoothly trivial sum.
\end{remark}


\subsubsection{Classification when $\kappa^s(M, \omega, F)=0$}
 By Theorem \ref{preclass} and  Proposition
\ref{surface'}, we have

\noindent \begin{theorem} \label{classification} Suppose a nonempty
surface $F\subset(M, \omega)$ is maximal with each component
positive genus. $\kappa^s(M, \omega, F)=0$ if and only if
$$\kappa^s(M, \omega)=-\infty \quad \hbox{and}\quad  [F]=-K_\omega.$$

\end{theorem}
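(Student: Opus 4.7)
The plan is to dispatch the two directions of the biconditional separately. The ``if'' direction is immediate: if $[F]=-K_{\omega}$, then $K_{\omega}+[F]=0$, so $(K_{\omega}+[F])^2=0$ and $(K_{\omega}+[F])\cdot [\omega]=0$, and Definition \ref{symp Kod} gives $\kappa^s(M,\omega,F)=0$. All the work is in the ``only if'' direction.

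For the ``only if'' direction, assume $\kappa^s(M,\omega,F)=0$, so that
\[
(K_{\omega}+[F])\cdot [\omega]=0 \quad\text{and}\quad (K_{\omega}+[F])^2=0.
\]
The first step is to show $\kappa^s(M,\omega)=-\infty$. By Theorem \ref{preclass} we have $\kappa^s(M,\omega)\leq \kappa^s(M,\omega,F)=0$, so we need to rule out $\kappa^s(M,\omega)=0$. But if $\kappa^s(M,\omega)\geq 0$, Proposition \ref{omega} says $(K_{\omega}+[F])\cdot [\omega]\geq 0$ with equality only when $F$ is empty. Since by hypothesis $F$ is nonempty, strict inequality would hold, contradicting $(K_{\omega}+[F])\cdot [\omega]=0$. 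Hence $\kappa^s(M,\omega)=-\infty$.

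The second step is to pin down $[F]=-K_{\omega}$. We are now in the setting of Proposition \ref{surface'}, provided we can exclude the exceptional case that $F$ is a section of a genus $h\geq 1$ $S^2$-bundle. But in that exceptional case, Theorem \ref{-infty} asserts $\kappa^s(M,\omega,F)=-\infty$, which contradicts our standing assumption $\kappa^s(M,\omega,F)=0$. So $F$ is not such a section, and Proposition \ref{surface'} applies. That proposition states that $(K_{\omega}+[F])\cdot [\omega]\geq 0$ with equality only if $[F]=-K_{\omega}$ (and each component of $F$ is a torus). Since we have equality, we conclude $[F]=-K_{\omega}$, completing the proof.

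No serious obstacle is anticipated: the theorem is essentially a bookkeeping consequence of Theorem \ref{preclass}, Proposition \ref{omega}, Proposition \ref{surface'}, and Theorem \ref{-infty}. The only subtlety is remembering to exclude the $S^2$-bundle exceptional case before invoking Proposition \ref{surface'}, which is handled precisely by the previously established classification in Theorem \ref{-infty}.
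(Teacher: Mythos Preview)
Your proof is correct and follows essentially the same route as the paper, which simply cites Theorem \ref{preclass} and Proposition \ref{surface'} in a one-line justification. Your version is more explicit: you spell out the trivial ``if'' direction, and for the ``only if'' direction you carefully invoke Theorem \ref{-infty} to exclude the $S^2$-bundle section case before applying Proposition \ref{surface'}, a point the paper leaves implicit.
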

\subsubsection{Dependence on $F$}

\begin{prop}\label{independence}
Suppose $ F_1, F_2\subset (M, \omega)$ are maximal symplectic
surfaces without sphere components. If $[F_1]=[F_2]$, then
$\kappa^s(M, \omega, F_1)=\kappa^s(M, \omega, F_2)$.
\end{prop}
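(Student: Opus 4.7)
The plan is to observe that Definition \ref{symp Kod} factors through the cohomology class $[F]\in H^2(M;\mathbb{R})$ alone; the other data, namely $K_\omega$ and $[\omega]$, does not depend on $F$. Concretely, when $F$ is non-empty the relative Kodaira dimension is read off from the signs of the two pairings $(K_\omega+[F])\cdot[\omega]$ and $(K_\omega+[F])^2$, both of which are manifestly functions of $[F]$. So if $F_1$ and $F_2$ are both non-empty with $[F_1]=[F_2]$, running the four-way case analysis of Definition \ref{symp Kod} gives $\kappa^s(M,\omega,F_1)=\kappa^s(M,\omega,F_2)$ immediately. If both $F_i$ are empty, then by the first clause of Definition \ref{symp Kod} we have $\kappa^s(M,\omega,F_i)=\kappa^s(M,\omega)$ for $i=1,2$, and equality is again immediate.

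The only step not completely automatic is the mixed case, where (say) $F_1=\emptyset$ but $F_2\neq\emptyset$. I would rule it out by positivity of symplectic area: from $F_1=\emptyset$ we get $[F_2]=[F_1]=0$, so $[F_2]\cdot[\omega]=0$; but each connected component of the non-empty symplectic surface $F_2$ contributes strictly positive $\omega$-area, giving $[F_2]\cdot[\omega]>0$, a contradiction. Hence the mixed case cannot arise.

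The main ``obstacle'' is really just this last bookkeeping observation; once it is in place the proposition is a tautology. I would also remark in passing that the hypothesis of maximality is itself a condition on $[F]$ (via Lemma \ref{max}: $[F]\cdot E>0$ for every $E\in\mathcal{E}_\omega$), so assuming $[F_1]=[F_2]$ is consistent with $F_1$ and $F_2$ both being maximal, and there is no hidden compatibility to check.
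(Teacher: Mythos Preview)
Your proof is correct and, in fact, more direct than the paper's own argument. You observe that for non-empty $F$ the four-way case split in Definition \ref{symp Kod} depends only on the two numbers $(K_\omega+[F])\cdot[\omega]$ and $(K_\omega+[F])^2$, both of which are functions of the class $[F]$; and you dispose of the empty/non-empty mismatch cleanly via the positivity of symplectic area.

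The paper instead appeals to the classification Theorems \ref{-infty} and \ref{classification} to handle the values $-\infty$ and $0$ (these theorems characterize those cases in terms of $M$ and $[F]$ alone, hence symmetrically in $F_1$ and $F_2$), reducing to $\kappa^s(M,\omega,F_i)\geq 1$, and then observes that $(K_\omega+[F])^2$ depends only on $[F]$ to separate $1$ from $2$. This route works, but it imports substantial earlier results where none are needed. Your argument is self-contained and shows that the proposition is really a tautology once the definition is written down; the paper's version perhaps emphasizes that the low-$\kappa^s$ cases are governed by geometric classification, but for the purpose of this proposition that is overkill.
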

\begin{proof}
By the classification Theorems \ref{-infty} and
\ref{classification}, we can assume that $\kappa^s(M, \omega,
F_i)\geq 1$ for $i=1,2$. Suppose $\kappa^s(M,  \omega, F_1)=1$, then
$$(K_{\omega}+[F_2])^2=(K_{\omega}+[F_1])^2=0,$$ so
$\kappa^s(M,  \omega, F_2)$ is at most $1$. Thus $\kappa^s(M,
\omega, F_2)$ must be equal to $1$ as well.
\end{proof}


\subsubsection{Non-maximal surface} We have defined $\kappa^s(M,
\omega, F)$ when $F$ is empty, or
 maximal and without sphere components.
As a direct consequence of  Lemma \ref{max} and Theorem
\ref{unique}, we can extend $\kappa^s(M, \omega, F)$ to any embedded
symplectic surface $F$ without sphere components.

\begin{definition}\label{nonminimal} Suppose $F\subset(M, \omega)$
is a symplectic surface without sphere components. If $F=\emptyset$,
then   the relative Kodaira dimension of $(M,\omega, F)$,
$\kappa^s(M, \omega, F)$, is defined to be $\kappa^s(M, \omega)$.
Otherwise, let $(M',\omega', F')$ be the unique relative minimal
model of $(M, \omega, F)$, and define $\kappa^s(M, \omega, F)$ to be
$\kappa^s(M',\omega',  F')$.
\end{definition}

It is easy to see that all the results for maximal surfaces hold for
general surfaces with obvious modifications.

\subsubsection{$F$ possibly with sphere components}

Recall that  $ F^+$ is the surface obtained from $F$ by removing the
sphere components.

 {\definition \label{symp Kod}
Let $F\subset(M, \omega)$ be an embedded symplectic surface. Then
the relative Kodaira dimension of $(M,\omega, F)$, $\kappa^s(M,
\omega, F)$,  is defined to be $\kappa^s(M, \omega, F^+)$.}


It is not hard to check that it is still well-defined and all the
results  still hold in this more general setting with obvious
modifications.


 We notice that the above definition is similar in one aspect to the
definition of the Thurston norm of  $3-$manifolds: the $2-$spheres
have to be discarded. One explanation is that a $2-$sphere has
$\kappa^t=-\infty$, so it behaves like the empty set in some sense.

It is also necessary in our case for two reasons, one is the
positive genus assumption in several results in section 3, e.g.
Lemma \ref{max}. Another is that there are the following three
special situatons with $F$ a sphere,  which would have relative
dimension $-\infty$ if we had defined it ``naively'':
\begin{enumerate}
\item  $K_{\omega}^2=0$, $K_{\omega}\cdot [F]=0$, $[F]^2=-2$.

\item  $K_{\omega}^2=0$, $K_{\omega}\cdot [F]=1$, $[F]^2=-3$.

\item  $K_{\omega}^2=1$, $K_{\omega}\cdot [F]=0$, $[F]^2=-2$.

\end{enumerate}
 An example for (1) is  $M=E(2)$ and $F$ a $-2$ sphere, and
an example for (2) is  $M=E(3)$ and $F$ a $-3$ sphere.

Due to Proposition \ref{independence}, it is  possible to extend
$\kappa^s(M, \omega, F)$ to the case of $F$ being a symplectic
surface with pseudo-holomorphic singularities, or a weighted
symplectic surface. We should also mention that the notion of the
logarithmic Kodaira dimension of a noncomplete variety introduced by
Iitaka (see \cite{Ib}) should be closely related to our relative
Kodaira dimension $\kappa^s(M, \omega, F)$. All these will be
studied elsewhere.

\section{Relative Kod. dim. in dim. $2$ and  fibrations over a surface} \label{rel} In this section we
introduce  Kodaira dimension for a $2-$manifold relative to a
rational linear combination of points, and discuss how it might be
used  to compute the Kodaira dimension of the total space of certain
fibrations with a $2-$dimensional base or a $2-$dimensional fiber.

In general, our viewpoint for a fibration is: ``good'' fibers and a
``singular'' base. More precisely, we first project the singular
fibers to the base to obtain a finite set. We then assign a rational
weight for each point of the image, subject to the requirement that
the weight is positive and only depends on the type (local data) of
the singular fiber. For any such assignment,
 we get an effective $\mathbb Q-$divisor on the base, hence
relative Kodaira dimension for the base along with absolute Kodaira
dimensions for the fiber and the total space. What we are able to
show is that often there is a  way (and sometimes unique)  to assign
the weight  so that
 these three
quantities together  form an additivity relation. We also note that
this scheme does not work in all cases. For example, we observe that
for a  genus two $4-$dimensional Lefschetz fibration over $S^2$
with non-minimal total space, we have to further modify this scheme
taking into account the total intersection numbers  of $-1$ classes
with the fiber class, in particular, we also need to relativize the
Kodaira dimension of the fiber. It indicates that relative Kodaira
dimension might be related to divisor contractions.



\subsection{$\kappa^t(F, D)$,  Riemann-Hurwitz formula and Seifert fibrations}

In dimension $2$, codimension $2$ submanifolds are just points.
\begin{definition}
Let $F$ be a closed oriented real surface. A $\mathbb Q$ linear
combination of points on $F$ of the form $D=\sum_{i=1}^k m_i x_i,
x_i\in F, m_i\in \mathbb Q$ is called a $\mathbb Q-$divisor on $F$.
Denote by $c(D)=\sum_{i=1}^k m_i$. The set $\{x_i\}$ is called the
support of $D$. $D$ is called effective or positive if $m_i\geq 0$,
and $D$ is called negative if $m_i\leq 0$.
\end{definition}

\begin{definition}\label{rel 2dim}
Let $F$ be a closed oriented real surface of genus $g$ and $D$ a
$\mathbb Q-$divisor. Define
\[
\kappa^t(F,D)=\left\{\begin{array}{cc}
-\infty & \hbox{ if $2g-2+c(D)<0$},\\
0& \hbox{ if $2g-2+c(D)=0$},\\
1& \hbox{ if $2g-2+c(D)<0$}.
\end{array}\right.
\]

\end{definition}

$D$ is allowed to be the empty set, and in this case, $\kappa^t(F,
\emptyset)=\kappa^t(F)$.
 Clearly,  $\kappa^t(F, D)\geq \kappa^t(F)$ if $D$
is effective, and $\kappa^t(F, D)\leq \kappa^t(F)$ if $D$ is
negative.

For an integral and effective $D$,  there are simple analogues of
$4-$dimensional results. For instance, if a nonempty $D$ is integral
and effective, then $\kappa^t(F, D)=-\infty$ if and only if $F=S^2$
and $D=x$ for some $x\in F$. Notice that if we view $S^2$ as an
$S^2-$bundle over a point, then this simple fact exactly corresponds
to Theorem \ref{-infty}.

We also observe that the relative Kodaira dimension fits well with
the connected sum construction (compare with Theorem
\ref{fibersum}).

\begin{prop}Suppose $F$ is the connected sum of $F_1$ and $ F_2$ along $p_1,...,
p_n\in F_1, q_1,..., q_n\in F_2$, then
$\kappa^t(F)=\max\{\kappa^t(F_1,D_1),\kappa^t(F_2,D_2)\}$ with
$D_1=\sum_{i=1}^n p_i, D_2=\sum_{i=1}^n q_i$. \end{prop}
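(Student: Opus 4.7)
The plan is to reduce the claim to a numerical identity via an Euler-characteristic computation, and then apply the definition. Set $\ell_i := 2g_i - 2 + c(D_i) = 2g_i - 2 + n$. By Definition \ref{rel 2dim}, $\kappa^t(F_i, D_i) = \kappa^s(\ell_i)$, where $\kappa^s$ is the three-valued sign function of \eqref{kappa(r)}, and by Definition \ref{2dim} we have $\kappa^t(F) = \kappa^s(2g(F) - 2)$. So it suffices to establish the additivity $2g(F) - 2 = \ell_1 + \ell_2$ and the sign identity $\kappa^s(\ell_1 + \ell_2) = \max\{\kappa^s(\ell_1), \kappa^s(\ell_2)\}$.

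For the additivity, I would compute $\chi(F)$ directly from the construction. The connected sum along $n$ points is performed by excising an open disk from $F_i$ around each $p_i$ (resp.\ each $q_i$) and identifying the $2n$ resulting boundary circles in matched pairs via $n$ cylinders. Since each disk excision drops $\chi$ by one while a cylinder contributes zero, $\chi(F) = \chi(F_1) + \chi(F_2) - 2n$. Converting via $\chi = 2 - 2g$ gives $g(F) = g_1 + g_2 + n - 1$, hence
\[
2g(F) - 2 = (2g_1 - 2 + n) + (2g_2 - 2 + n) = \ell_1 + \ell_2,
\]
which is the desired additivity of log Euler numbers.

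For the sign identity, since $n \geq 1$ we have $\ell_i \geq -1$, with equality only in the degenerate case $g_i = 0$, $n = 1$, i.e.\ capping off a single point by a sphere. In the generic case where $\ell_1, \ell_2 \geq 0$, the $\{-\infty, 0, 1\}$-valued function $\kappa^s$ satisfies $\kappa^s(\ell_1 + \ell_2) = \max\{\kappa^s(\ell_1), \kappa^s(\ell_2)\}$ by an immediate case check: both sides equal $0$ when $\ell_1 = \ell_2 = 0$, and both equal $1$ when at least one of them is positive. In the degenerate case the connected sum is topologically trivial, $F \cong F_{3-i}$, and the identity reduces to the absolute version.

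The only point requiring care is the degenerate capping $g_i = 0$, $n = 1$; this is the two-dimensional shadow of the $g \geq 1$ hypothesis in Theorem \ref{fibersum} and does not arise in the fibration applications motivating Section \ref{rel}. Everything else is a one-line Euler characteristic computation followed by a finite case check.
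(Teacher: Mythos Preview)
Your approach---reducing to the additivity $2g(F)-2=\ell_1+\ell_2$ via the Euler characteristic and then checking signs---is exactly the ``easily checked'' computation the paper has in mind, and your treatment of the generic case $\ell_1,\ell_2\geq 0$ is clean and correct.

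However, your handling of the degenerate case $g_i=0$, $n=1$ contains an error. You claim that ``the identity reduces to the absolute version'', but it does not: while the left side becomes $\kappa^t(F)=\kappa^t(F_{3-i})$, the right side becomes $\max\{-\infty,\kappa^t(F_{3-i},D_{3-i})\}=\kappa^t(F_{3-i},q)$, which is the \emph{relative} invariant with one marked point, not the absolute one. These disagree precisely when $F_{3-i}=T^2$: then $\kappa^t(T^2)=0$ but $\kappa^t(T^2,q)=\kappa^s(1)=1$. So the proposition as literally stated fails for $(F_1,F_2,n)=(S^2,T^2,1)$. Your instinct that this is the two-dimensional shadow of the $g\geq 1$ hypothesis in Theorem~\ref{fibersum} is exactly right---just as summing with an $S^2$-bundle along a section is the smoothly trivial sum (Remark~\ref{-infty'}), capping with $(S^2,p)$ is topologically trivial here---but the correct conclusion is that this case must be \emph{excluded} (equivalently, one assumes $\kappa^t(F_i,D_i)\neq -\infty$), not that the identity still holds.
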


 If  $n=0$, then $D_1=D_2=\emptyset$. By the definition of
$\kappa^t$ for a disconnected manifold,
$\kappa^t(F)=\kappa^t(F_1\sqcup F_2)=\max\{\kappa^t(F_1),
\kappa^t(F_2)\}$. When $n=c(D_1)=c(D_2)$ is positive, this can also
be easily checked.



As mentioned in the introduction, $\kappa^t(F, D)$ is introduced to
achieve additivity of Kodaira dimensions for a fibration where $F$
is either the base or a smooth fiber. When  $F$ is the base, the
support of $D$ is often the image of the singular fibers, and each
weight $m_i$ is positive. It might be delicate to determine
 the exact value of $m_i$ in each specific case.
We will illustrate this idea by investigating several types of
important fibrations. We begin with ramified coverings in dimension
$2$.

\subsubsection{Ramified coverings  and
the Riemann-Hurwitz formula} \label{2D} Let $S', S$ be oriented
surfaces and $\pi: S' \longrightarrow S$ a ramified cover of degree
$N$. Suppose the ramification set is $\{p_i\}$ and denote by
$e_{p_i}$ the ramification index of $p_i$. Then we have the famous
Riemann-Hurwitz formula:
\begin{equation}\label{hurwitz}\chi(S')=
N\chi(S)-\sum(e_{p_i}-1)=N(\chi(S)-\frac{1}{N}\sum(e_{p_i}-1)).\end{equation}


A ramified cover is often viewed as a fibration with ``good'' base
and some ``bad'' fibers. However,  we would like to think of  the
base surface $S$ as a ``relative surface" $(S,D)$ with
$$D_{\pi}=\sum_{\{p_i\}}\dfrac{e_{p_i}-1}{N}\, p_i.$$
With this natural choice of $D_{\pi}$, the Riemann-Hurwitz formula
\eqref{hurwitz} can be interpreted as
$$\kappa^t(S')=\kappa^t(S, D_{\pi})+\kappa^t(fiber)=\kappa^t(S,
D_{\pi}).$$



\subsubsection{Seifert fibrations} \label{seifert}

A Seifert fibration on a $3-$manifold $M^3$ is a fibration
$\pi:M^3\to B$ to a closed surface $B$ with circle fibers.
The singular fibers are all multiple fibers. Suppose the singular
fibers have images  $p_1, ..., p_n\in B$ and multiplicities  $a_1,
..., a_n$.
Classically, $B$ is viewed as an orbifold with orbifold points
$\{p_i\}$, and with  orbifold Euler characteristic
$$\chi^{orb}(B)=\chi(B)-\sum(1-\frac{1}{a_i}).$$
Our view is slightly different, viewing the base as a relative
surface with the natural choice of divisor, $D_{\pi}=
\sum_{i=1}^n(1-\frac{1}{a_i})p_i$, suggested by the definition of
$\chi^{orb}(B)$ above.

\begin{prop} \label{12} With the set up above, we have
$$\kappa^t(M^3)=\kappa^t(B, D_{\pi})+\kappa^t(fiber)=\kappa^t(B,
D_{\pi}). $$

\end{prop}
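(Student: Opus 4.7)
The plan is to convert both sides of the desired equality into the classical orbifold Euler characteristic $\chi^{\mathrm{orb}}(B)=\chi(B)-\sum(1-1/a_i)$ and then invoke the classification of Seifert geometries. First I would unpack the left-hand side: since the fiber is $S^{1}$, Definition \ref{01} gives $\kappa^{t}(\mathrm{fiber})=0$, so the claimed additivity collapses to the single assertion $\kappa^{t}(M^{3})=\kappa^{t}(B,D_{\pi})$. From the chosen divisor, $c(D_{\pi})=\sum(1-1/a_{i})$, hence
\begin{equation*}
2g(B)-2+c(D_{\pi}) \;=\; -\chi^{\mathrm{orb}}(B).
\end{equation*}
Reading off Definition \ref{rel 2dim}, it follows that $\kappa^{t}(B,D_{\pi})=-\infty,\,0,\,1$ according to whether $\chi^{\mathrm{orb}}(B)$ is positive, zero, or negative.

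The main step is then to identify $\kappa^{t}(M^{3})$ from the same sign. For this I would appeal to the classical geometrization of Seifert fibered $3$-manifolds (Scott, Thurston): any closed Seifert fibered $M^{3}$ is prime (the only exceptions, $S^{1}\times S^{2}$ and its non-orientable cousin, have $\chi^{\mathrm{orb}}(B)>0$ and geometry $S^{2}\times\mathbb{R}$), admits one of the six Seifert geometries, and the geometry type is pinned down by $\chi^{\mathrm{orb}}(B)$ and the Euler number $e(\pi)$. The key observation is that within each of the three categories of Definition \ref{t} the sign of $\chi^{\mathrm{orb}}(B)$ is constant: $\chi^{\mathrm{orb}}(B)>0$ forces geometry $S^{3}$ or $S^{2}\times\mathbb{R}$ (category $-\infty$); $\chi^{\mathrm{orb}}(B)=0$ forces $\mathbb{E}^{3}$ or Nil (category $0$); and $\chi^{\mathrm{orb}}(B)<0$ forces $\mathbb{H}^{2}\times\mathbb{R}$ or $\widetilde{SL_{2}(\mathbb{R})}$ (category $1$). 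The Euler number only decides which geometry within a category occurs (e.g.\ $\mathbb{E}^{3}$ vs.\ Nil when $\chi^{\mathrm{orb}}=0$), so it is invisible to $\kappa^{t}$. Since $M^{3}$ is prime and already geometric, Definition \ref{t} reads off $\kappa^{t}(M^{3})$ directly from the category, and a line-by-line comparison with the trichotomy above gives the desired equality.

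The expected obstacle is not technical but bookkeeping: one must verify case-by-case that no Seifert geometry straddles the category boundary. Orientability of $B$ is not a concern since the formula $\chi^{\mathrm{orb}}=\chi(B)-\sum(1-1/a_{i})$ is unchanged, and the same Seifert-geometry dichotomy applies (with at most a finite cover passing to the orientable case, which preserves $\kappa^{t}$ by Proposition \ref{st}). Since this proposition is marked as essentially contained in \cite{Z}, I would also cite that reference for the detailed matching and for the fact that Seifert manifolds need no further prime/toroidal decomposition in the sense of Definition \ref{t}.
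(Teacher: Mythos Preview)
Your proposal is correct and follows essentially the same route as the paper: reduce $\kappa^{t}(B,D_{\pi})$ to the sign of $\chi^{\mathrm{orb}}(B)$, then invoke the classification of Seifert geometries to match each sign with the corresponding category in Definition~\ref{t}. One small slip in your parenthetical: $S^{1}\times S^{2}$ is in fact prime (though not irreducible); the genuine non-prime Seifert example is $\mathbb{RP}^{3}\#\mathbb{RP}^{3}$, which still has $\chi^{\mathrm{orb}}>0$ and $S^{2}\times\mathbb{R}$ geometry, so your conclusion is unaffected.
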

The argument is  similar to the special case of  $S^1-$bundles in
\cite{Z}. Notice  that $\kappa^t(B, D_{\pi})$ only depends on the
sign of $\chi^{orb}(B)$.

When $\chi^{orb}(B)>0$, by the classification of Seifert fibre
spaces, $M^3$ has $S^3$  geometry if $\pi_1(M^3)$  is finite, and
$S^2 \times \mathbb{R}$  geometry if $\pi_1(M^3)$ is infinite. In
this case, $\kappa^t(M^3)=\kappa^t(B, D_{\pi})=-\infty$.

When $\chi^{orb}(B)=0$, again by the classification, the possible
geometries for $M^3$ are Euclidean or Nil. In this case,
$\kappa^t(M^3)=\kappa^t(B, D_{\pi})=0$.

Finally, when $\chi^{orb}(B)<0$, $M^3$ has geometry of type
$\mathbb{H}^2\times \mathbb{R}$ or $\widetilde{SL_2(\mathbb{R})}$.
In this case, $\kappa^t(M^3)=\kappa^t(B, D_{\pi})=1$.
\subsection{Lefschetz fibrations}

Now we investigate several kinds of $4-$dimensional Lefschetz
fibrations. We will denote a $4-$dimensional Lefschetz fibration by
$\pi: M^4\to B$, and a general smooth fiber by $F$. It suffices to
restrict to relatively minimal Lefschetz fibrations. By Proposition
\ref{bundle}, we can also assume that there is at least a singular
fiber.
\subsubsection{When $\kappa^t(F)=-\infty$}
 Notice that if  $\kappa^t(F)=-\infty$, if
there is a singular fiber, then it is not relatively minimal. So we
also assume from now on that $\kappa^t(F)\geq 0$.

\subsubsection{When $\kappa^t(B)=1$ and $\kappa^t(F)\geq 0$}
In this case it was shown in \cite{DZ} that if $M^4$ admits a
symplectic (complex) structure, then
\begin{equation}\label{sh}\kappa^{s(h)}(M^4)=\kappa^t(F)+\kappa^t(B).
\end{equation}
Since for any effective divisor $D$ of the base surface $B$, we have
$\kappa^t(B, D)=\kappa^t(B)=1$,  if we assign any positive weight
$b_i$, we still have
\begin{equation}\label{shr}\kappa^{s(h)}(M^4)=\kappa^t(F)+\kappa^t(B, D_{\pi, b_i}).
\end{equation}
\subsubsection{When $\kappa^t(B)=0$ and $\kappa^t(F)=0$}
In this case, it was calculated in \cite{DZ} that $\kappa^s(M^4)=1$.
Thus for any positive assignment $b_i$, we have
$$\kappa^s(M^4)=1=1+0=\kappa^t(B, D_{\pi, b_i})+\kappa^t(F).$$

\subsubsection{When $\kappa^t(B)=-\infty$ and $\kappa^t(F)=0$}\label{4D}
In this  case there is a unique choice of weights. Notice that there
is only one type of elliptic Lefschetz singular fibers. Thus the
weight $b$ is determined by a fibration $\pi:K3\to S^2$ with $24$
singular fibers:  If the additivity holds for this fibration, then
$\kappa^t(S^2, D_{\pi, b})=\kappa^s(K3)-\kappa^t(F)=0-0=0$, which
means that $-\chi(S^2)-c(D_{\pi, b})=2-24b=0$, i.e.
$b=\frac{1}{12}$. Then it is easy to check that with this choice of
weight, the additivity also holds  for all relatively minimal
elliptic Lefschetz fibrations over $S^2$, namely, $\pi:E(n)\to S^2$
with $12n$ singular fibers, as $\kappa^s(E(n))=\kappa^s(n-2)$.

In the remaining cases we assume the fibration is hyperelliptic.
\subsubsection{When $\kappa^t(B)=0$ and $\kappa^t(F)=1$ and the
fibration is hyperelliptic} In this case, it was calculated in
\cite{DZ} that $\kappa^s(M^4)=2$. Thus for any positive assignment
$b_i$, we have
$$\kappa^s(M^4)=1=1+0=\kappa^t(B, D_{\pi, b_i})+\kappa^t(F).$$
\subsubsection{When $\kappa^t(B)=-\infty, \kappa^t(F)=1$ and the
fibration is hyperelliptic with minimal total space} In this case,
since $F$ is not a torus,  the total space $M^4$ admits a compatible
symplectic structure $\omega$. We further observe
\begin{lemma} For any genus $g\geq 2$ Lefschetz fibration with
minimal total space $M$ and a compatible symplectic form $\omega$,
\begin{equation}\label{k2'} \kappa^s(M, \omega)=\kappa^s(K_{\omega}^2)+1=\kappa^s(K_{\omega}^2)+\kappa^t(F).
\end{equation}
\end{lemma}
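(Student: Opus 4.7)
The plan is to derive the identities from the additivity \eqref{add} for minimal symplectic $4$-manifolds, namely $\kappa^s(M,\omega)=\kappa^s(K_\omega^2)+\kappa^s(K_\omega\cdot[\omega])$. Since $g\geq 2$ yields $\kappa^t(F)=1$, the second equality in \eqref{k2'} is immediate. The first identity $\kappa^s(M,\omega)=\kappa^s(K_\omega^2)+1$ then unpacks, via Definition \ref{sym Kod'}, into two claims: (A) $\kappa^s(M,\omega)\neq 0$, and (B) if $\kappa^s(M,\omega)=-\infty$ then $K_\omega^2<0$. Given (A) and (B), comparing each case $\kappa^s(M,\omega)\in\{-\infty,1,2\}$ directly against Definition \ref{sym Kod'} recovers \eqref{k2'}.

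For (A), I would apply the adjunction formula \eqref{adjunction} to a smooth fiber $F$, which has $[F]^2=0$, to obtain $K_\omega\cdot[F]=2g-2>0$. In particular $K_\omega$ is non-torsion, which rules out $\kappa^s(M,\omega)=0$.

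For (B), suppose for contradiction that $M$ is minimal with $\kappa^s(M,\omega)=-\infty$ and $K_\omega^2\geq 0$. By the classification of minimal symplectic $4$-manifolds with $\kappa^s=-\infty$, $M$ must be $\mathbb{CP}^2$, $S^2\times S^2$, or an $S^2$-bundle over $T^2$ (these exhaust the ones with $K_\omega^2\geq 0$, since $K^2=8(1-h)$ for $S^2$-bundles over $\Sigma_h$). I would rule out each by a direct intersection form calculation. For instance, on $S^2\times T^2$ with generators $s,t$ satisfying $s^2=t^2=0$, $s\cdot t=1$ and $K_\omega=-2t$, any Lefschetz fiber class $[F]=as+bt$ with $[F]^2=2ab=0$ forces $a=0$ or $b=0$; combining the adjunction relation $K_\omega\cdot[F]=-2a=2g-2>0$ with the positivity condition $[\omega]\cdot[F]>0$ against an $\omega$-positive class then yields a contradiction. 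The cases $\mathbb{CP}^2$, $S^2\times S^2$, and the nontrivial $S^2$-bundle over $T^2$ are analogous finite checks.

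The main obstacle is step (B), which is a case analysis that requires attention to signs and to the requirement that a candidate class be representable by an $\omega$-symplectic surface. A more unified alternative would invoke Lemma \ref{special}: when $M$ is minimal with $K_\omega^2\geq 0$, the class $-K_\omega$ is GW stable and hence has a $J$-holomorphic representative. Choosing $J$ so that the Lefschetz fibration is $J$-holomorphic and applying positivity of intersection against a $J$-holomorphic fiber $F$ should contradict $-K_\omega\cdot[F]=-(2g-2)<0$, provided one can exclude $[F]$ from being a component of the representative (which is where the assumption $g\geq 2$ would reappear).
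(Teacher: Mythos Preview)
Your proposal is correct and takes essentially the same approach as the paper: both arguments use adjunction on the fiber to rule out $\kappa^s=0$, and both handle the case $\kappa^s=-\infty$ with $K_\omega^2\geq 0$ by listing the candidate minimal manifolds ($\mathbb{CP}^2$, $S^2\times S^2$, $S^2$-bundles over $T^2$) and checking that none carries a square-zero symplectic surface of genus $\geq 2$. The only difference is organizational---you reduce first to claims (A) and (B), whereas the paper runs through the three cases $K_\omega^2<0$, $=0$, $>0$---but the content is identical.
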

\begin{proof} \eqref{k2'} certain holds if $K_{\omega}^2<0$.
If  $K_{\omega}^2=0$, then $\kappa^s(M)=0$ or $1$. Since $g\geq 2$,
by the adjunction formula, we actually must have
$\kappa^s(M)=1=\kappa^s(0)+1$. The remaining case is
$K_{\omega}^2>0$.  If $\kappa^s(M, \omega)=-\infty$ and
$K_{\omega}^2>0$, then $M$ is $\mathbb {CP}^2$ or an  $S^2-$bundle
over $S^2$ or $T^2$. It is easy to check that there are no square
$0$ symplectic surfaces with genus at least $2$ in such a $(M,
\omega)$. Thus, we must have in this case
$\kappa^s(M)=2=1+1=\kappa^s(K_{\omega}^2)+1$.
\end{proof}

It remains to show that we can assign $b_i$ to each $x_i$ such that
$b_i$ only depends on the singularity type of $\pi^{-1}(x_i)$ and
\begin{equation}\label{=}\kappa^s(S^2, D_{\pi,
b_i})=\kappa^s(K_{\omega}^2).\end{equation}

 We take clues from Endo's signature
formula for hyperelliptic fibration over $S^2$ (\cite{En}):
\begin{equation}
\sigma(M)=-\frac{g+1}{2g+1}a+\sum_{p=1}^{[\frac
{g}{2}]}(\frac{4p(g-p)}{2g+1}-1)s_p,
\end{equation}
where $a$ is the number of non-separating singular fibers, and $s_p$
is the number of separating fibers of type $(p, g-p)$. The formula
for $K_{\omega}^2$ is calculated in \cite{DZ} to be
\begin{equation}\label{k2}\begin{array}{ll}K_{\omega}^2&=3\sigma(M)+2\chi(M)\\
&=2(2(2-2g))+\frac{g-1}{2g+1}a+\sum_{p=1}^{[\frac
{g}{2}]}\frac{6p(g-2p)+2g(p-1)+(4gp-1)}{(2g+1)}s_p.
\end{array}
\end{equation}

Let $b_{g, ns}$ be the weight for a non-separating fiber and $b_{g,
p}$ be the weight for a separating fiber of type $(p, g-p)$. By
\eqref{k2} it is natural to propose that
   \begin{equation}\label{bns}\begin{array}{ll}b_{g,ns}&=\frac{g-1}{(4g-4)(2g+1)}=\frac{1}{4(2g+1)},\cr
   b_{g, p}&= \frac{6p(g-2p)+2g(p-1)+(4gp-1)}{(4g-4)(2g+1)},\end{array}\end{equation}
and it is easy to check that, with this choice of $b_i$, \eqref{=}
holds.

In fact, $b_i$ defined by \eqref{bns} should be the unique weight
such that \eqref{=} holds. We have indeed verified the uniqueness
for genus 2 fibrations. In this case, in \eqref{bns},  $b_{2,
ns}=\frac{1}{20}$, and
 $b_{2,1}=\frac{7}{20}$. Our strategy is simple.  First consider a self  fiber
 sum of
a genus two holomorphic Lefschetz fibration with no separating
 fibers and $20$ non-separating singular fiber in (\cite{C}). It is minimal by \cite{U1} and has
 $K_{\omega}^2=0$. Thus it follows $b_{2, ns}$ has to be
 $\frac{1}{20}$. We next consider a self  fiber
 sum of
a genus two  Lefschetz fibration with $2$ separating fiber and $6$
non-separating singular fiber in \cite{Ma} and \cite{OS}. It is minimal again by
\cite{U1} and also has
 $K_{\omega}^2=0$. With $b_{2, ns}$ already determined to be $\frac{1}{20}$, $b_{2, 1}$ has to
 be $\frac{7}{20}$.

\subsubsection{When $\kappa^t=-\infty, \kappa^t(F)=1$ and the
fibration is hyperelliptic} An interesting discovery here  is that
we also need to use the relative Kodaira dimension of a generic
smooth fiber. Here the support of the divisor is the intersection
with a maximal set of disjoint $-1$ spheres, and the coefficients
are negative.
Let $(M', \omega')$ be a minimal model of $(M, \omega)$ and $E_i$
the classes of the symplectic $-1$ spheres in $(M, \omega)$ that are
blown down to obtain $(M , \omega)$. Let $c$ be the number of those $-1$ spheres. Since  $\kappa^s(M,
\omega)=\kappa^s(M', \omega')$, we can compute it using the
expression
$$\kappa^s(K_{\omega'}^2)+\kappa^s(K_{\omega'}\cdot [\omega']).$$

 Now,
let us first compute $\kappa^s(K_{\omega'}^2)$. First, we have
\begin{equation}\label{formula1}K_{\omega'}^2=(K_{\omega}-\sum
E_i)^2=K_{\omega}^2+c.\end{equation}

Notice that we can fiber sum $(M,\omega, F)$ with itself to get a
minimal manifold $(DM, \tau)$. $(DM, \tau)$ also has a genus $g$
hyperelliptic Lefschetz fibration structure with twice of the
singular fibers. It is minimal by the result of \cite{U1}. In addition,
\begin{equation}\label{formula2}
K_{\tau}^2=2(K_{\omega}+[F])^2.
\end{equation}
 Using the hyperelliptic Lefschetz fibration
structure on $(DM, \tau)$, we can also compute $K_{\tau}^2$ by
\eqref{k2} and \eqref{bns},
\begin{equation}\label{formula3}K_{\tau}^2=2(\sum b_i-1)(4g-4)
\end{equation}
Thus, combine \eqref{formula1}, \eqref{formula2}, \eqref{formula3},
we have \begin{equation} \kappa^s(-2+\sum
b_i+\dfrac{c}{4g-4})=\kappa^s(K_{\omega'}^2).\end{equation} Regard
$\kappa^s(-2+\sum b_i+\dfrac{c}{4g-4})$ as the relative Kodaira
dimension of the base. When $c=0$,  this is just what we have
previously.

Now we turn to   $\kappa^s(K_{\omega'}\cdot [\omega'])$. Let $F'$ be
the symplectic surface in $(M', \omega')$ obtained by blowing down
$F$ and smoothing. Let $c'=\sum [F]\cdot [E_i]$, by \eqref {final} applied to $F'\subset (M',
\omega')$, when $K_{\omega'}^2\geq 0$, we have
$$\kappa^s(K_{\omega'}\cdot [\omega'])=\kappa^s(K_{\omega}'\cdot [F'])=\kappa^s(2g-2-c').$$

Thus $\kappa^s(K_{\omega'}\cdot [\omega'])$
 can be viewed as the relative
Kodaira dimension of the fiber relative to the pencil points but
with ``negative mass'', at least when $K_{\omega'}^2\geq 0$. In
particular,  we have in this case
$$\kappa^s(M, \omega)=\kappa^s(-2+\sum
b_i+\dfrac{c}{4g-4})+\kappa^s(2g-2-c').$$ This continues to hold when
$K_{\omega'}^2< 0$ as both sides are equal to $-\infty$.

\end{document}